\let\proof\@undefined
\let\endproof\@undefined
\let\theorem\@undefined
\let\endtheorem\@undefined
\newtheorem{thm}{Theorem}
\newtheorem{lemma}{Lemma}
\newtheorem{defn}{Definition}
\newtheorem{remark}{Remark}
\newtheorem{asmp}{\textbf{Assumption}}
\newcommand{\reals}{\mathbb{R}}
\newcommand{\norm}[1]{\left \lVert#1\right \rVert}
\tikzstyle{block} = [draw, fill=white, rectangle, 
\tikzstyle{sum} = [draw, fill=white, circle, node distance=0.5cm, inner sep=0pt, minimum size=0.25cm]
\tikzstyle{input} = [coordinate]
\tikzstyle{output} = [coordinate]
\tikzstyle{pinstyle} = [pin edge={to-,thin,black}]
\tikzstyle{branch}=[fill,shape=circle,minimum size=3pt,inner sep=0pt]
\title{\Huge  A Passivity-Based Approach to Nash Equilibrium Seeking over Networks
%\thanksref{footnoteinfo1}   %Continuous-time Distributed  Dynamics for 
}
\author{Dian Gadjov and Lacra Pavel
\thanks{This work was supported by an NSERC Discovery Grant.}%
\thanks{D. Gadjov and L. Pavel are with the Department of Electrical and Computer Engineering, University of Toronto, Toronto, ON, M5S 3G4, Canada %
        {\tt\small dian.gadjov@mail.utoronto.ca}, 
        {\tt\small pavel@control.utoronto.ca}}%
}
\begin{document}

\pdfminorversion=4

\maketitle
\thispagestyle{empty}
\pagestyle{empty}

%------------------------------------------------------------------------- 

\begin{abstract}

In this paper we consider the problem of distributed Nash equilibrium (NE) seeking  over networks,  a setting in which players have limited local information. We start from a continuous-time gradient-play dynamics that converges to an NE under strict monotonicity  of the pseudo-gradient and assumes perfect information,  i.e., instantaneous all-to-all player communication. We consider how to modify this gradient-play dynamics in the case of partial, or networked information between players.  We propose an augmented   gradient-play dynamics with correction in which players communicate locally only with their neighbours  to compute an estimate of the other players' actions. We derive the new dynamics based on the reformulation as a multi-agent coordination problem over an undirected graph. We exploit incremental passivity properties and show that a synchronizing, distributed Laplacian feedback can be designed using relative estimates of the neighbours. Under  a strict monotonicity  property of the pseudo-gradient, we show that the augmented gradient-play dynamics converges to consensus on the NE of the game. We further discuss two cases that highlight the tradeoff between properties of the game and the communication graph.
\end{abstract}

\section{Introduction}

%for a class of games 
We consider  distributed Nash equilibrium (NE) seeking over  networks, where players  have limited local information, over a communication network. This is a research topic of recent interest,   \cite{FKB12}, \cite{Johansson2016},  \cite{stankovic2012distributed},  due to many networked scenarios in which such problems arise such as in wireless communication \cite{LH10}, \cite{CH12}, \cite{AB04}, optical networks \cite{Pavelbook2012}, \cite{Pavel06}, \cite{Pavel09}, distributed constrained convex optimization   \cite{WE11}, \cite{Marden2013}, noncooperative flow control problems \cite{YSM11}, \cite{Alpcan2005}, etc.

We propose a new \emph{continuous-time dynamics} for a general class of  N-player games and prove its convergence to  NE  over a \emph{connected} graph. Our scheme is derived based on reformulating the problem as a multi-agent coordination problem between the players and leveraging passivity properties. Specifically, we endow each player (agent) with an auxiliary state variable that provides an estimate of all other players' actions. For each agent we combine its own gradient-type  dynamics with an integrator-type auxiliary dynamics, driven by some control signal. We design the control signal for each individual player, based on the relative output feedback from its neighbours,  such that these auxiliary state variables agree one with another. The resulting player's dynamics has two components: the \emph{action} component composed of a gradient term (enforcing the move towards minimizing its own cost) and a consensus term, and the \emph{estimate} component  with a consensus term. We call this new dynamics an augmented gradient-play dynamics with correction and estimation.  We prove  it converges to consensus on the NE of the game, under  a monotonicity  property of the extended pseudo-gradient.

%LATER This multi-agent coordination problem formulation is not in a standard form. While the agents (players) have individual heterogeneous, separable, dynamics in the augmented state space, they  do not satisfy an individual (incremental) passivity property as typically assumed in distributed multi-agent coordination literature, \cite{A07}-\cite{BP15}. %This individual (incremental) passivity property is typically assumed in coordination control \cite{A07}-\cite{BP15} or distributed optimization \cite{WE11}. % In the latter case of distributed optimization, the standard assumption is that individual cost functions are strictly convex, or that individual gradients are strictly monotone/incrementally passive. %\cite{DN13}
%In the networked game context considered herein, such individual gradient assumptions are too restrictive. 
%We show that under a strict incremental passivity  of the pseudo-gradient vector, a synchronizing distributed feedback can be designed using the relative estimates of the neighbours, hence a Laplacian-type feedback. 

% Differences versus existing literature (NE seeking, DOPT and MA coordination): 

\emph{Literature review.} Our work is related to the literature of NE seeking in games  \emph{over networks}. Existing results are almost exclusively developed in discrete-time. The problem of NE seeking under networked communication is considered  in \cite{NS12},\cite{G16}, specifically for the special class of \emph{aggregative} games, where each agent's cost function is coupled to other players' actions through a single, aggregative variable. In \cite{FPAuto2016},  this approach is generalized to a larger class of coupled games: a gossip-based discrete-time algorithm is proposed and convergence was shown for \emph{diminishing step-sizes}. Very recently, discrete-time ADMM-type algorithms with constant step-sizes have been proposed and convergence proved  under \emph{co-coercivity}  of the extended pseudo-gradient,  \cite{FredIFAC2017}, \cite{WeiShiACC2017}.   % on the augmented space

In continuous-time, gradient-based dynamics for NE computation have been used  since the work Arrow, \cite{ArrowHurwitzUzawa1951}, \cite{ArrowHurwitzUzawa1958}, \cite{Flam02}, \cite{SA05}. Over networks, gradient-based algorithms are designed in \cite{Marden2013}   based on information from only a set of local neighbouring agents for  games with local utility functions (proved to be state-based potential games). Continuous-time distributed NE seeking dynamics are proposed for a two-network zero-sum game in \cite{G13}. Key assumptions are the additive decomposition of the common objective function as well as other structural assumptions. Based on the max-min formulation, the dynamics takes the form of a saddle-point dynamics, \cite{ArrowHurwitzUzawa1951},  distributed over the agents of each of the two networks, inspired by the optimization framework of \cite{WE11}. 

In this paper we consider a general class of  N-player games,  where  players have limited local information about the others' actions over a communication network. Our work is also related to the distributed optimization framework in \cite{WE11}. However there are several differences between \cite{WE11} or  \cite{G13} and our work. Beside the summable structure  of the common cost function, in \cite{WE11} a critical structural  assumption is the fact that each agent optimizes its cost function over the \emph{full} argument. Then, when an augmented (lifted) space of actions and estimates is considered in the  networked communication case, a lifted cost function is obtained which can be decomposed as a sum of  \emph{separable cost functions, individually convex in their full argument}.  This leads to distributed algorithms, under strict convexity of the individual cost  functions with respect to the \emph{ full} argument. In a strategic game context, the individual convexity  properties with respect to the full argument are too restrictive unless the game is separable to start with. While the game setting has an inherent distributed structure (since each player optimizes its own cost function),  individual (player-by-player) optimization) is over \emph{its own} action. In contrast to distributed optimization, each player's individual action is only \emph{part  of the full action profile} and its cost function is coupled to its opponents' actions, which are under their decision. This key differentiating structural aspect between games and distributed optimization presents technical challenges. 
%such additive decomposition is not present, unless the game is separable to start with.

In this work,  we also consider  an augmented space to deal with the networked communication case, of actions and estimates of others' actions. However, the difficulty is that we do not have an additive decomposition to exploit, and each player only controls/optimizes \emph{a part} of the full argument  on which its own cost function depends on. Our main approach is to highlight and exploit passivity  properties of the  game (pseudo-gradient), gradient-based algorithm/dynamics and network (Laplacian). %LATER Interesting connections between passivity and games have been used in the context of  population games in  \cite{FS13}. 
A typical assumption  in games is not individual gradient monotonicity with respect to the full argument, but  rather monotonicity of the pseudo-gradient. The corresponding game assumption we use in the  networked communication case, is monotonicity of  the extended pseudo-gradient, which is equivalent to incremental passivity.

\emph{Contributions. } We consider a general class of N-player games and  develop of a new continuous-time dynamics for  NE seeking dynamics  under networked information. Our approach is based on reformulating the problem as a multi-agent coordination problem and exploiting basic incremental passivity properties of the  pseudo-gradient map. To the best of our knowledge such an approach has not been proposed before. Our contributions are three-fold.
 \emph{First}, we show that under strict monotonicity of the extended pseudo-gradient, the proposed new dynamics  converges over \emph{any connected} graph, unlike \cite{FredIFAC2017}, \cite{WeiShiACC2017}. Our scheme is different from  \cite{FPAuto2016}, due to an extra correction term on the actions' dynamics that arises naturally from the passivity-based design. This term is in fact critical to prove convergence on a single timescale. Essentially, players perform simultaneous consensus of estimates and player-by-player optimization.

 \emph{Secondly}, our passivity-based approach  highlights the tradeoff between properties of the game and those of the communication graph. Under a weaker Lipschitz continuity assumption of the extended pseudo-gradient,  we show that the new dynamics  converges over \emph{any sufficiently connected} graph. Key is the fact that  the Laplacian contribution (or excess passivity) can be used to balance the other terms  that are dependent on the game properties.  \emph{Thirdly},  we relax the connectivity bound on the graph, based on a time-scale separation argument.  This is achieved by modifying  the dynamics of the estimates such that the system approaches quickly the consensus subspace.

The paper is organized as follows.
Section \ref{background} gives the preliminary background.
Section \ref{problem_Statement} formulates the noncooperative game and basic assumptions.
Section \ref{graphCommunication} presents the distributed NE seeking dynamics  and analyzes its equilibrium points.
Section \ref{convergence} analyzes the convergence of the proposed dynamics over a connected graph under various assumptions. 
Section \ref{projection} considers the case of compact action spaces, where projection dynamics are proposed and analyzed.
Numerical examples are given in Section \ref{Simulation} and conclusions in Section \ref{Conclusion}.

\section{Preliminaries} \label{background}
%In this section, we review the notations and preliminary notions in TBD
\emph{Notations}. Given a vector $x \in \reals^n$,  $x^T$ denotes its transpose.  Let $x^Ty$ denote the Euclidean inner product of $x,y \in \reals^n$ and $\|x\|$ the Euclidean norm. Let  $A \otimes B$ denote the Kronecker product of matrices $A$ and $B$. The all ones vector is $\mathbf{1}_n = [1, \dotsc, 1]^T \in \reals^n$,  and the all zeros vector is $\mathbf{0}_n = [0, \dotsc, 0]^T \in \reals^n$.  $diag(A_1,\dotsc,A_n)$ denotes the block-diagonal matrix with  $A_i$ on its diagonal. Given a matrix $M\in\reals^{p\times q}$,  $Null(M) = \{x\in\reals^q | Mx = 0\}$ and $Range(M) = \{y\in\reals^p | (\exists x\in\reals^q) \ y = Mx \}$. A function $\Phi:  \reals^n \rightarrow  \reals^n$ is monotone if $(x-y)^T(\Phi(x) - \Phi(y)) \geq 0$, for all $x, y \in \reals^n$, and strictly monotone if the inequality is strict when $x\neq y$.  $\Phi$ is strongly monotone if there exists $\mu >0$ such that $(x-y)^T(\Phi(x) - \Phi(y)) \geq \mu \| x- y \|^2$, for all $x,y \in\reals^n$.  For a differentiable function $V:\reals^n\rightarrow \reals$, $ \nabla V(x) = \frac{\partial V}{\partial x}(x) \in \reals^n$ denotes its gradient.   $V$ is convex, strictly, strongly convex if and only if its gradient $\nabla V$ is monotone, strictly, strongly monotone. 
Monotonicity properties play in variational inequalities the same role as convexity plays in optimization.

%$\star\star\star\star\star <ADDITION> <$
\subsection{Projections}

%**** TBD ADD from Brogliato et al. SCL2006, Sandholm or Cojocaru and Jonker2003, Lemarechal

Given a closed, convex set $\Omega \subset \reals^n$, let the interior, boundary and closure of $\Omega$ be denoted by $\text{int} \Omega$, $\partial \Omega$ and $\overline{\Omega}$, respectively. The normal cone of $\Omega$ at a point $x \in {\Omega}$ is defined  as $N_{\Omega}(x) = \{y\in\reals^{n} |  y^T(x'-x) \leq 0, \forall x'\in {\Omega}\}$.  The tangent cone of $\Omega$ at $x \in {\Omega}$  is given as $T_{\Omega}(x) = \overline{\bigcup_{\delta>0} \frac{1}{\delta} (\Omega - x)}$. %LATER or $T_{\Omega}(x) = \{y\in\reals^{n}\ | \ y = \alpha(x'-x)\ \alpha\in\reals_+,  x'\in {\Omega}\}$.  
The projection operator of a point $x \in \reals^n$ to the set $\Omega$ is  given by the point  $P_{\Omega}(x) \in \Omega$  such that $\norm{x - P_{\Omega}(x)} \leq \norm{x - x'} $, for all $x'\in \Omega$, or $P_{\Omega}(x) =\text{argmin}_{x' \in \Omega} \norm{x - x'}$. The projection operator of a vector $v \in \reals^n$ at a point $x \in \Omega$ with respect to $\Omega$ is $\Pi_{\Omega}(x, v) = \underset{\delta\to 0_+}{\lim} \frac{P_{\Omega}(x + \delta v) - x}{\delta}$. Note that $\norm{\Pi_{\Omega}(x, v)} \leq \norm{v}$. Given $x \in \partial \Omega$ let $n(x)$ denote the set of  outward unit normals to $\Omega$ at $x$,  $n(x) = \{ y \,  | \, y \in N_{\Omega}(x), \norm{y} =1 \}$. 
By Lemma 2.1 in \cite{NZ96}, if $x\in \text{int} \Omega$, then $\Pi_{\Omega}(x, v) =v$, while if $x \in \partial \Omega$, then 
\begin{align}\label{charactPi}
%\Pi_{\Omega}(x, v) =v - \max \{ 0,  v^T n^*(x) \}  \, n^*(x)
\Pi_{\Omega}(x, v) =v - \beta(x) \, n^*(x)
\end{align}
where $n^*(x) = \underset{n \in n(x)}{\text{argmax}} \,  v^T \, n $ and $\beta(x) = \max\{ 0,  v^T n^*(x) \}$. Note that if $v \in {T_\Omega(x)}$ for some  $x \in \partial \Omega$, then 
$\underset{n \in n(x)}{\text{sup}} \,  v^T \, n \leq 0$ hence $\beta(x) =0$ and no projection needs to be performed.   The operator  $\Pi_{\Omega}(x, v)$ is equivalent to the projection of the vector $v$ onto the tangent cone $T_{\Omega}(x)$ at $x$, $\Pi_{\Omega}(x, v) = P_{T_\Omega(x)}(v)$. 

A set $C\subseteq \reals^n$ is a cone if  for any $ c\in C$, $\gamma c \in C$ for every $\gamma > 0$. The polar cone of  a convex cone $C$ is given by $C^\circ = \{y\in\reals^n | \, y^Tc \leq 0,\ \forall c\in C\}$.
\begin{lemma}[Moreau's Decomposition Theorem III.3.2.5, \cite{Lemarechal}]\label{lemma:Moreau}
	Let $C \subseteq \reals^n$ and $C^\circ \subseteq \reals^n$ be a closed convex cone and its polar cone, and let $v\in\reals^n$. Then the following are equivalent:
	
	(i) $v_C =  P_{C}(v)$ and  $v_{C^\circ} =  P_{C^\circ}(v)$.
	
	(ii) $v_{C}\in C$, $v_{C^\circ}\in C^\circ$, $v = v_{C} + v_{C^\circ}$,  and $v^T_{C} \, v_{C^\circ} =0$.
\end{lemma}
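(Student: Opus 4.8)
The plan is to establish the equivalence by reducing both directions to a single, sharper fact: the variational characterization of the projection onto a closed convex \emph{cone}. Concretely, for $x^* \in C$ I would first prove that $x^* = P_C(v)$ if and only if $v - x^* \in C^\circ$ \emph{and} $(v - x^*)^T x^* = 0$. Once this characterization and its polar analogue are in hand, together with the bipolar identity $(C^\circ)^\circ = C$ (valid for any closed convex cone), both implications collapse into routine verifications.

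To derive the cone characterization I would start from the standard variational inequality defining the projection onto a closed convex set, namely $(v - x^*)^T(x' - x^*) \leq 0$ for all $x' \in C$. Exploiting the cone structure, I would insert the two feasible choices $x' = 2x^* \in C$ and $x' = \mathbf{0}_n \in C$ (the latter lies in $C$ because $\mathbf{0}_n = \lim_{\gamma \to 0_+} \gamma c \in \overline{C} = C$). The first gives $(v - x^*)^T x^* \leq 0$ and the second gives $(v - x^*)^T x^* \geq 0$, so the orthogonality $(v - x^*)^T x^* = 0$ follows. Substituting this back into the variational inequality yields $(v - x^*)^T x' \leq 0$ for every $x' \in C$, i.e. $v - x^* \in C^\circ$; the converse direction reverses these elementary steps.

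For $(i) \Rightarrow (ii)$ I would apply the cone characterization to $v_C = P_C(v)$, obtaining $v - v_C \in C^\circ$ and $(v - v_C)^T v_C = 0$. I would then observe that the residual $v - v_C$ itself satisfies the corresponding characterization for the projection onto $C^\circ$: it lies in $C^\circ$, its residual $v - (v - v_C) = v_C$ lies in $(C^\circ)^\circ = C$, and the orthogonality carries over. By uniqueness of the projection onto a closed convex set this forces $P_{C^\circ}(v) = v - v_C$, and since $(i)$ posits $v_{C^\circ} = P_{C^\circ}(v)$ we conclude $v = v_C + v_{C^\circ}$, with $v_C^T v_{C^\circ} = (v - v_C)^T v_C = 0$ by symmetry of the inner product. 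The membership statements $v_C \in C$ and $v_{C^\circ} \in C^\circ$ are automatic for projections.

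For $(ii) \Rightarrow (i)$ I would simply check that the hypotheses feed directly into the cone characterization on each side. For $P_C$, the residual $v - v_C$ equals $v_{C^\circ} \in C^\circ$ and $v_C^T v_{C^\circ} = 0$, so $v_C = P_C(v)$; for $P_{C^\circ}$, the residual $v - v_{C^\circ}$ equals $v_C \in (C^\circ)^\circ = C$ with the same orthogonality, so $v_{C^\circ} = P_{C^\circ}(v)$. The only genuinely structural ingredient, and the step I would be most careful to invoke correctly, is the bipolar identity $(C^\circ)^\circ = C$, which is what makes the roles of $C$ and $C^\circ$ symmetric; apart from that, the argument is entirely an exercise in manipulating the defining variational inequality.
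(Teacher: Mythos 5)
Your proof is correct; note that the paper itself gives no proof of this lemma---it is imported verbatim by citation from Hiriart-Urruty and Lemar\'echal (Theorem III.3.2.5), so there is no in-paper argument to compare against. Your route is exactly the standard textbook one: the variational characterization of projection onto a closed convex cone (residual in the polar plus orthogonality, obtained via the test points $x' = 2x^*$ and $x' = \mathbf{0}_n$), applied symmetrically to $C$ and $C^\circ$. One small refinement worth recording: you do not actually need the full bipolar identity $(C^\circ)^\circ = C$, whose nontrivial inclusion $(C^\circ)^\circ \subseteq C$ rests on a separation argument. In both places where you invoke it, the vector you must place in $(C^\circ)^\circ$ is $v_C$, which you already know lies in $C$, so only the elementary inclusion $C \subseteq (C^\circ)^\circ$ (immediate from the definition of the polar) is required; equivalently, in the verification of $v_{C^\circ} = P_{C^\circ}(v)$ the inequality $(v - v_{C^\circ})^T(y - v_{C^\circ}) = v_C^T y \leq 0$ for $y \in C^\circ$ follows directly from $v_C \in C$ and the definition of $C^\circ$. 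Dropping the bipolar theorem makes the proof fully self-contained modulo the projection VI. The only other implicit hypothesis is $C \neq \emptyset$, which your limit argument for $\mathbf{0}_n \in C$ tacitly uses and which is anyway needed for $P_C$ to be defined.
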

 Notice that $N_{\Omega}(x)$ is a convex cone and the tangent cone is its polar cone, i.e., $N_{\Omega} (x)= (T_{\Omega}(x))^\circ$, $(N_{\Omega} (x))^\circ= T_{\Omega}(x)$. By Lemma \ref{lemma:Moreau}, for any $x\in \Omega$, any  vector  $v\in \reals^n$ can be decomposed into  tangent  $ v_{T_{\Omega}}\in T_{\Omega}(x)$ and normal components, $v_{N_{\Omega}}\in N_{\Omega}(x)$, 
 \begin{align}\label{v_Moreau}
	v = v_{T_{\Omega}} + v_{N_{\Omega}}%, \quad v_{T_{\Omega}}\in T_{\Omega}(x),\ v_{N_{\Omega}}\in N_{\Omega}(x)
\end{align}
with  $v_{T_{\Omega}} =  P_{T_{\Omega}(x)}(v) =\Pi_{\Omega}(x, v)$, $v_{N_{\Omega}} =  P_{N_{\Omega}(x)}(v)$.

\subsection{Graph theory}

The following are from \cite{AGT01}. An undirected graph $G_c$ is a pair $G_c=(\mathcal{I},E)$ with  $\mathcal{I} = \{1,\dotsc,N\}$ the vertex set and $E\subseteq \mathcal{I}\times \mathcal{I}$ the edge set  such that for $i,j\in \mathcal{I}$, if $(i,j)\in E$, then $(j,i)\in E$. The degree of vertex $i$,  $\text{deg}(i)$, is the number of edges connected to $i$. A path in a graph is a sequence of edges which connects a sequence of vertices. A graph is connected if there is a path between every pair of vertices. In this paper we associate a vertex with a player/agent. An edge 
between agents $i,j \in \mathcal{I} $ exists if agents $i$ and $j$ exchange information. Let $\mathcal{N}_i \subset \mathcal{I}$ denote the set of neighbours of  player $i$. The Laplacian matrix $L\in \reals^{N\times N}$ describes the connectivity of the graph $G_c$, with   $[L]_{ij}  = |\mathcal{N}_i|$, if $i=j$,  $	[L]_{ij}  = -1$, if $j\in\mathcal{N}_i $ and $0$ otherwise. When $G_c$ is an undirected and connected graph, $0$ is a simple eigenvalue of $L$,  $L\mathbf{1}_N = \mathbf{0}_N$,  and all other eigenvalues positive. 
%LATER
% $\mathbf{1}_N^TL = \mathbf{0}_N^T$,
% Thus, \begin{align} \label{eq:nullL}\begin{matrix}Null(L) = span\{\mathbf{1}_N\}  \qquad Range(L) = Null (\mathbf{1}_N^T)\end{matrix}\end{align}
 Let the eigenvalues of $L$ in ascending order be $0<\lambda_2\leq\dotsc\leq\lambda_N$, then $\underset{x\neq 0,\ \mathbf{1}_N^Tx = 0}{min} \, x^TLx = \lambda_2 \|x\|^2_2$, 
$ \underset{x\neq 0}{max} \, x^TLx = \lambda_N \|x\|_2^2$. 
%%Note that since $L$ is symmetric, $Range(L)= Null(L)^{\perp}=span\{\mathbf{1}_N\}^{\perp}= Null(\mathbf{1}_N^T)$. 
 \subsection{Equilibrium Independent and Incremental Passivity}
The following  are from \cite{MA11},\cite{DN13},\cite{BP15}, \cite{Pavlov08}. Consider $\Sigma$
\begin{align} \label{eq:DynOverall}
\Sigma :
  \begin{cases}
    \dot{x} = f(x,u), & \\
    y = h(x,u), & \\
  \end{cases}
\end{align}%These assumptions ensure existence and uniqueness of solutions. 
with $x\in \reals^n$,  $u \in \reals^q$ and $y \in \reals^q$, $f$ locally Lipschitz and $h$ continuous. Consider a differentiable  function $V:\reals^n \rightarrow \reals$. The time derivative of $V$ along solutions of (\ref{eq:DynOverall}) is denoted as $\dot{V}(x) = \nabla^T V(x) \, f(x,u)$ or just $\dot{V}$. Let  $\overline{u}$, $\overline{x}$, $\overline{y}$ be  an equilibrium condition, such that $0=f(\overline{x},\overline{u})$, $\overline{y}=h(\overline{x},\overline{u})$. Assume $\exists \overline{U}\subset\reals$ and a continuous function $k_x(\overline{u})$  such that for any constant  $\overline{u} \in \overline{U}$,   $f(k_x(\overline{u}),\overline{u}) = 0$.  The   continuous function  $k_y(\overline{u}) = h(k_x(\overline{u}),\overline{u})$ is the  equilibrium input-output map. 
Equilibrium independent passivity (EIP) requires $\Sigma$ to be passive independent of the equilibrium point. 
\begin{defn}\label{def:EIP}
System $\Sigma$ (\ref{eq:DynOverall}) is Equilibrium Independent Passive (EIP) if it is passive with respect to $\overline{u}$ and $\overline{y}$; that is for every $ \overline{u} \in \overline{U}$ there exists a differentiable, positive semi-definite storage function $V: \reals^n \to \reals$ such that $V(\overline{x}) = 0$ and, for all $u \in \reals^q$, $x \in \reals^n$,
		$$\dot{V}(x)  \leq  (y-\overline{y})^T(u-\overline{u})$$
%, where $y= h(x,u)$, $\overline{y} =h(k_x(\overline{u}),\overline{u})$.
\end{defn}%System $\Sigma$ is output-strictly EIP if there exists $\rho>0$ such that 	$$\dot{V}(x)  \leq  (y-\overline{y})^T(u-\overline{u}) - \rho \|y-\overline{y}\|^2$$
%LATER 
%EIP properties help in deriving stability and convergence properties for feedback systems without requiring the exact knowledge equilibrium point, but rather  only that such an equilibrium exists. 
%LATER
A slight refinement to the EIP definition can be made to handle the case where $k_y(\overline{u})$ is not a function but is a map.  
An EIP system  with a map $k_y(\overline{u})$  is called maximal EIP (MEIP) when  $k_y(\overline{u})$ is maximally monotone, e.g. an integrator, \cite{DN13}. The parallel interconnection and the feedback interconnection of EIP systems results in a EIP system. 
%LATER 
%A useful property of EIP systems is that they also have the same interconnection properties as passive systems.
%\begin{lemma}[Property 2 \& 3 \cite{MA11}]\label{thm:InterconnectEIP}
%	If $\Sigma$ and $\Pi$ are two EIP systems then the parallel interconnection and the feedback interconnection of $\Sigma$ and $\Pi$ result in a EIP system.
%\end{lemma}
%Given a matrix $M$ and a system $\Sigma$ that is passive then if $\Sigma$ is pre-multiplied by $M$ and post-multiplied by $M^T$ it is also passive.
%\begin{lemma} \label{lemma:prePostMatrix}
%	Given a matrix $M$ and a system $\Sigma$ that is passive then if $\Sigma$ is pre-multiplied by $M$ and post-multiplied by $M^T$ it is also passive.
%\end{lemma}
%
When passivity  holds in comparing any two trajectories of $\Sigma$,  the property is called incremental passivity, \cite{Pavlov08}.
\begin{defn}\label{def:IncrP}
System $\Sigma$ (\ref{eq:DynOverall}) is incrementally passive if  there exists a $C^1$, regular, positive semi-definite  storage function $V: \reals^n \times \reals^n \to \reals$ such that for any two inputs $u$, $u'$ and any two solutions $x$, $x'$ corresponding to these inputs, the respective outputs $y$, $y'$ satisfy
		$$\dot{V}(x,x')   \leq  (y-y')^T(u-u')$$
where $\dot{V}=\nabla_x^T V(x,x')  f(x,u) +\nabla_{x'}^T V(x,x') f(x',u')$.
\end{defn}
When $u',x',y'$ are constant (equilibrium conditions), this recovers EIP definition. When system $\Sigma$ is just a static map, incrementally passivity reduces to monotonicity. A static function $y=\Phi(u)$ is EIP if and only if it is incrementally passive, or equivalently, it is monotone. Monotonicity plays an important role in optimization and variational inequalities while passivity plays as critical a role in dynamical systems.

\section{Problem Statement}\label{problem_Statement}
\subsection{Game Formulation}
Consider a set $\mathcal{I}=\{ 1,\dots,N\}$ of $N$ players (agents) involved in a game.  The information sharing between them is described by an undirected  graph $G_c = (\mathcal{I},E)$ or  $G_c$. 
\begin{asmp}\label{asmp:CxnGraph}
	The  communication graph $G_c$ is connected. %, i.e., there exists a path from any agent $i$ to any agent $j$.
\end{asmp}
Each player $i \in \mathcal{I}$ controls its action $x_i \in \Omega_i$, where $\Omega_i \subseteq \reals^{n_i}$. 
The action set of all players is $\Omega = \prod_{i\in\mathcal{I}}\Omega_i \subseteq \reals^{n}$, $n = \sum_{i\in\mathcal{I}} n_i$.   
Let $x=(x_i,x_{-i})\in {\Omega}$ denote all agents' action profile or $N$-tuple,  where  $x_{-i}\in \Omega_{-i} = \prod_{j\in \mathcal{I}\setminus\{i\}}\Omega_j$ is the $(N-1)$-tuple of all agents' actions except agent $i$'s. Alternatively, $x$ is represented as a stacked vector $x = [x_1^T \dots x_N^T]^T \in \Omega \subseteq \reals^n$. Each player (agent) $i$ aims to minimize its  own cost function $J_i(x_i,x_{-i})$, $J_i : \Omega \to \reals$, which depends on possibly all other players' actions. 
 Let the game thus defined be denoted by $\mathcal{G}(\mathcal{I},J_i,\Omega_i)$.
\begin{defn}\label{defNE}
	Given a game $\mathcal{G}(\mathcal{I},J_i,\Omega_i)$, an action profile $x^* =(x_i^*,x_{-i}^*)\in \Omega$ is  a Nash Equilibrium (NE) of $\mathcal{G}$  if
	\begin{align*}
		(\forall i \in \mathcal{I})(\forall y_i \in \Omega_i) \quad J_i(x_i^*,x_{-i}^*) \leq J_i(y_i,x_{-i}^*)
	\end{align*}
\end{defn}
%LATER 
At a Nash Equilibrium no agent has any incentive to unilaterally deviate from its action. 
In the following we use one of the following two  basic convexity and smoothness assumptions, which ensure the existence of a pure NE.

\begin{asmp} \label{asmp:Jsmooth}

\begin{itemize}
\item [(i)] 	For every $i\in\mathcal{I}$, $\Omega_i=\reals^{n_i}$, the cost function  $J_i:\Omega \to \reals$  is $\mathcal{C}^2$ in its arguments, strictly convex and radially unbounded in $x_i$,  for every $x_{-i}\in {\Omega}_{-i}$.

	%\label{asmp:JsmoothCompact}
\item [(ii)] For every $i\in\mathcal{I}$,  $\Omega_i$ is a non empty, convex and compact subset of $\reals^{n_i}$ and the cost function $J_i:\Omega \to \reals$  is $\mathcal{C}^1$ in its arguments and (strictly) convex in $x_i$, for every $x_{-i}\in {\Omega}_{-i}$.
\end{itemize}
\end{asmp}
Under Assumption \ref{asmp:Jsmooth}(i) from Corollary 4.2 in  \cite{B99} it follows that a pure NE $x^*$ exists. Moreover,  an NE satisfies
\begin{align} \label{eq:ViNash_inner}
 \nabla_{i} J_i(x^*_i,x^*_{-i})=0, (\forall i \in \mathcal{I}) \quad \text{or} \quad  F(x^*) = 0 
\end{align}
where $\nabla_{i} J_i(x_i,x_{-i}) =\frac{\partial J_i}{\partial x_i}(x_i,x_{-i})\in \reals^{n_i}$, is the gradient of agent $i$'s cost function $J_i(x_i,x_{-i})$ with respect to  its own action $x_i$ and  $F : \Omega \to \reals^n$ is the pseudo-gradient defined by stacking all agents'  partial gradients, 
\begin{align}\label{eq:expPsuedoGrad_F}
F(x) = [\nabla_{1} J^T_1(x),\dotsc,\nabla_{N} J^T_N(x)]^T
\end{align}
%TBD out We note if $\Omega_i \subset \reals^{n_i}$ is compact then the radially unboundedness condition on $J_i$ in Assumption \ref{asmp:Jsmooth}(i) can be removed. 

Under Assumption \ref{asmp:Jsmooth}(ii) it follows from Theorem 4.3 in \cite{B99} that a pure NE exists, based on Brower's fixed point theorem. Under just convexity of $J_i$ with respect to $x_i$, existence of an NE follows based on a Kakutani's fixed point theorem. Moreover a Nash equilibrium (NE) $x^*\in \Omega$ satisfies the variational inequality (VI) (cf. Proposition 1.4.2, \cite{FP07}),
\begin{align} \label{eq:ViNash}
	(x-x^*)^TF(x^*)\geq 0 \quad \forall x\in {\Omega}
\end{align}
and projected gradient methods need be used, \cite{FP07}. Additionally (\ref{eq:ViNash}) can be written as $-F(x^*)^T(x-x^*)\leq 0$ and from the definition of the normal cone, % $x^*$ is a Nash equilibrium is if
\begin{align}
	-F(x^*) \in N_{\Omega}(x^*) \label{eq:ViNashNormCone}
\end{align}
%Let $X^*$ denote the set all such points, which is non-empty by Assumption \ref{asmp:Jsmooth}(ii).

Next we state typical assumptions on the pseudo-gradient.

 \begin{asmp} \label{asmp:PseudoGradMono}
\begin{itemize}
\item [(i)] 		The pseudo-gradient $F: \Omega \to \reals^n$ is strictly monotone,
$%	\begin{align*}
		(x-x')^T(F(x)-F(x')) > 0, \, \forall x\neq x' %\in \Omega
$.%	\end{align*}

\item [(ii)] 	The pseudo-gradient $F: \Omega \to \reals^n$ is strongly monotone,
$%	\begin{align*}
	(x-x')^T(F(x)-F(x')) \geq \mu \| x-x'\|^2,  \,\forall x, x' \in \Omega
$,  %	\end{align*}
for $\mu >0$, and Lipschitz continuous, $ %\begin{align*}
	\| F(x)-F(x')\| \leq \underline{\theta} \| x-x'\|, \, \forall x, x' \in \Omega
$, %	\end{align*}
where  $\underline{\theta} >0$.
\end{itemize}
\end{asmp}
Under Assumption \ref{asmp:PseudoGradMono}(i) or \ref{asmp:PseudoGradMono}(ii),  the game has a unique NE, (cf. Theorem 3 in \cite{SFPP14}).

%
%REPLACED THIS DIRECTLY b/c NOTE NEEDS also x(t) to stay in \Omega_i also!!! that is why replace \Omega_i with whole R, Otherwise a projection gradient method would be required. 
%Note that alternatively,   (\ref{eq:ViNash_inner}) characterizes an NE when for each $i$, $\Omega_i$ is not necessarily bounded and  each $J_i$ satisfies a radially unboundeness condition with respect to $x_i$, (cf. Corollary 4.2 in \cite{B99}). To provide the main idea and to simplify the derivations, we assume  (\ref{eq:ViNash_inner}) as characterization of an NE, without the need to take projections to $\Omega_i$. 

 	%LATER the game structure and
	%LATER in order to compute the NE  players are required to exchange some information. 
The above setting refers to  players' strategic interactions, but it does not specify what knowledge or information each player has. Since $J_i$ depends on all players' actions,  an introspective calculation of an NE requires  \emph{complete information}  where each player knows  the cost functions and strategies of all other players, see Definition \ref{defNE} and (\ref{eq:ViNash_inner}). A game with \emph{incomplete information} refers to players not fully knowing the cost functions or strategies of the others,   \cite{SB87}. Throughout the paper, we assume  $J_i$ is known by player $i$ only.  In a game with incomplete but \emph{perfect information}, each agent has knowledge of the actions of all other players, $x_{-i}$. We refer to the case when players are not able to observe the actions of \emph{all} the other players, as a game with incomplete and  \emph{imperfect or partial information}. This is the setting we consider in this paper: we assume players can communicate only locally, with their neighbours. Our goal is to derive a dynamics for seeking an NE in the \emph{incomplete, partial information case, over a communication graph $G_c$}.  We review first the case of perfect information, and treat the case of partial or networked information in the following sections. In the first part of the paper, for simplicity of the arguments, we consider  $\Omega_i = \reals^{n_i}$ and Assumption \ref{asmp:Jsmooth}(i).  We consider compact action sets and treat the case of projected dynamics, under Assumption \ref{asmp:Jsmooth}(ii), in Section \ref{projection}.

\subsection{Gradient Dynamics with Perfect information} \label{perfectCommunication}
In a game of perfect information, under Assumption \ref{asmp:Jsmooth}(i), a typical gradient-based dynamics, \cite{ArrowHurwitzUzawa1958},  \cite{Flam02}, \cite{SA05}, \cite{FP07}, 
  can be used for each player $i$ 
\begin{align} \label{eq:gradient}
	\mathcal{P}_i : & \quad	\dot{x}_i = - \nabla_i J_i(x_i, x_{-i}), \quad \forall i\in \mathcal{I}
\end{align}
or, $\mathcal{P} : \,\,\, \dot{x} = -F(x)$, the overall  system of all the agents' dynamics stacked together. Assumption \ref{asmp:Jsmooth}(i)  ensures existence and uniqueness of solutions of (\ref{eq:gradient}). %LATER Note that the dynamics of all players are coupled one with another. 
Note that     (\ref{eq:gradient}) requires all-to-all instantaneous information exchange between players or  over a \emph{complete communication} graph. Convergence of  (\ref{eq:gradient})  is typically shown under strict (strong)  monotonicity  on the pseudo-gradient $F$, \cite{FP07}, \cite{Flam02}, or under strict diagonal dominance of its Jacobian evaluated at $x^*$,\cite{FKB12}. We provide a passivity interpretation.  The gradient dynamics $\mathcal{P}$ (\ref{eq:gradient})  is the feedback interconnection between a bank of $N$ integrators $\Sigma$ and the static pseudo-gradient map $F(\cdot)$, Figure \ref{fig:decomposeGrad}. 
\begin{figure}[h!]
  \centering
\begin{tikzpicture}[auto, node distance=1cm]
    \node [input, name=input] {};
    \node [sum, right = of input] (sum) {};
    \node [block, right = of sum] (dynamics) {$ \Sigma : \begin{cases}
	\dot{x} = u \\
	y = x
  \end{cases}$};
    \node [output, right = of dynamics] (output) {};
    
	\node [block, below = of dynamics] (game) {$F(\cdot)$};

  %  \draw [draw,->] (input) -- node {$0$} (sum);
    \draw [->] (sum) -- node {$u$} (dynamics);
    \draw [->] (dynamics) -- node [name=y] {$y$}(output);
        \draw [->] (y) |- node [anchor=south east] {} (game);
    \draw [->] (game) -| node[pos=0.99] {$-$} 
           node [anchor=south west] {} (sum);
\end{tikzpicture}
	\caption{Gradient Dynamics (\ref{eq:gradient}) as Feedback Interconnection of two EIP}
	\label{fig:decomposeGrad}
\end{figure}
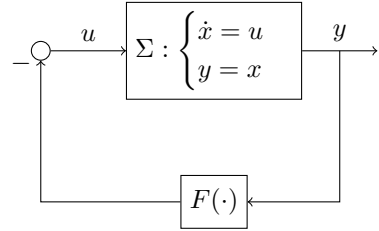
 $\Sigma$  is MEIP with storage function $V(x) = \frac{1}{2}\|x-\overline{x}\|^2$, while  $F(\cdot)$ is static and under Assumption \ref{asmp:PseudoGradMono}(i)  is incrementally passive (EIP). Hence their interconnection  is also EIP and asymptotic stability can be shown using the same storage function. 
 %The next result shows distributed convergence to Nash equilibria of the gradient dynamics for all-to-all communication.
\begin{lemma} \label{lemma:perfectInfo}
	Consider a game $\mathcal{G}(\mathcal{I},J_i, \Omega_i)$ in the perfect information case, under Assumptions \ref{asmp:Jsmooth}(i).  Then,  the equilibrium point  $\bar{x}$ of the gradient dynamics, (\ref{eq:gradient}) is the NE of the game $x^*$ and, under Assumption \ref{asmp:PseudoGradMono}(i), is globally asymptotically. Alternatively, under \ref{asmp:PseudoGradMono}(ii) $x^*$  is exponentially stable, hence the solutions of (\ref{eq:gradient}) converge exponentially to the NE of the game, $x^*$. 
	%Where $x \in \Omega \subset \reals^n$, $F(x)$ is the pseudo-gradient and $\overline{x}\in\overline{X}$ is the equilibrium point of $\mathcal{P}$. If Assumptions \ref{asmp:Jsmooth}(i)-\ref{asmp:PseudoGradMono}(i) hold and $\overline{x}$ is in the interior of the action set $\overline{x}\in int(\Omega)$ then the equilibrium is asymptotically stable. Additionally that equilibrium is the Nash equilibrium, $\overline{x} = x^*$.
\end{lemma}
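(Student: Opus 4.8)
The plan is to establish the three claims in sequence using the quadratic storage function $V(x) = \tfrac{1}{2}\norm{x-x^*}^2$ already singled out by the passivity decomposition in Figure \ref{fig:decomposeGrad}, where $x^*$ denotes the NE. For the equilibrium characterization, I would first observe that any equilibrium $\bar{x}$ of $\mathcal{P}$ satisfies $0 = -F(\bar{x})$, i.e. $F(\bar{x})=0$. Since $\Omega_i = \reals^{n_i}$ and Assumption \ref{asmp:Jsmooth}(i) holds, the first-order condition (\ref{eq:ViNash_inner}) shows that $F(x)=0$ characterizes exactly the NE; under Assumption \ref{asmp:PseudoGradMono}(i) (or (ii)) the NE is unique, hence $\bar{x}=x^*$. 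The $\mathcal{C}^2$ smoothness in Assumption \ref{asmp:Jsmooth}(i) makes $F$ locally Lipschitz, guaranteeing existence and uniqueness of solutions.

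For global asymptotic stability I would differentiate $V$ along trajectories of $\dot{x}=-F(x)$ and use $F(x^*)=0$ to write
\begin{align*}
\dot{V}(x) = (x-x^*)^T\dot{x} = -(x-x^*)^T\big(F(x)-F(x^*)\big).
\end{align*}
Strict monotonicity (Assumption \ref{asmp:PseudoGradMono}(i)) makes the right-hand side strictly negative for every $x\neq x^*$ and zero at $x^*$, so $V$ is a radially unbounded, positive-definite Lyapunov function with negative-definite derivative. Global asymptotic stability of $x^*$ then follows from the standard Lyapunov theorem; radial unboundedness of $V$, together with $V$ non-increasing along solutions, also precludes finite-time escape, so trajectories are defined for all $t\geq 0$. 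This is precisely the passivity statement in the text: $V$ serves as the storage function of the feedback interconnection of the MEIP integrator bank with the incrementally passive map $F$.

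For exponential stability I would keep the same $V$ and invoke strong monotonicity (Assumption \ref{asmp:PseudoGradMono}(ii)) in place of strict monotonicity to sharpen the estimate to
\begin{align*}
\dot{V}(x) = -(x-x^*)^T\big(F(x)-F(x^*)\big) \leq -\mu\norm{x-x^*}^2 = -2\mu\, V(x).
\end{align*}
By the comparison lemma this gives $V(x(t))\leq V(x(0))\,e^{-2\mu t}$, hence $\norm{x(t)-x^*}\leq \norm{x(0)-x^*}\,e^{-\mu t}$, i.e. global exponential convergence to the NE.

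I do not anticipate a serious obstacle, as this is a textbook-style Lyapunov argument; the two points deserving care are (i) that $F(\bar{x})=0$ pins down the NE \emph{uniquely}, which combines the existence claim from Assumption \ref{asmp:Jsmooth}(i) with the uniqueness guaranteed by Assumption \ref{asmp:PseudoGradMono}, and (ii) that in the strict (as opposed to strong) monotonicity case $\dot{V}$ is genuinely negative \emph{definite} rather than merely negative semi-definite, so that LaSalle's invariance principle is not even needed. Note that the Lipschitz bound $\underline{\theta}$ in Assumption \ref{asmp:PseudoGradMono}(ii) plays no role in this particular argument; it will presumably enter later when balancing game properties against graph connectivity.
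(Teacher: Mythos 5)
Your proof is correct and follows essentially the same route as the paper: the same storage function $V(x)=\frac{1}{2}\norm{x-x^*}^2$, the same identity $\dot{V}=-(x-x^*)^T(F(x)-F(x^*))$ with strict (resp.\ strong) monotonicity supplying negativity (resp.\ the $-2\mu V$ bound). Your only deviations are refinements rather than a different approach: the paper closes the strict-monotone case by citing LaSalle's theorem, whereas you correctly observe that $\dot{V}<0$ for $x\neq x^*$ together with radial unboundedness already yields global asymptotic stability directly, and you spell out the NE-uniqueness step and the explicit exponential rate that the paper leaves implicit.
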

\begin{proof}At  an equilibrium $\overline{x}$,   of (\ref{eq:gradient}), $F(\overline{x})=0$, hence by (\ref{eq:ViNash_inner}), $\bar{x} =x^*$, the NE of $\mathcal{G}(\mathcal{I},J_i, \Omega_i)$. Consider the quadratic Lyapunov function $V : \Omega \to \reals$, 
$	V(x) = \frac{1}{2} \| x - \bar{x}\|^2$. Along (\ref{eq:gradient}) using  $F(\overline{x})=0$, 
$	\dot{V}(x)  = -(x - \overline{x})^T(F(x) - F(\overline{x})) < 0$, for all $ x \neq  \overline{x}$, by Assumption \ref{asmp:PseudoGradMono}(i). Hence $\dot{V}(x) \leq 0$ and $\dot{V}(x) = 0$ only if $x=\overline{x}=x^*$. Since $V$ is radially unbounded, the conclusion follows by  LaSalle's theorem  \cite{K02}. Under Assumption \ref{asmp:PseudoGradMono}(ii), $	\dot{V}(x) \leq - \mu  \|x - \overline{x} \|^2$, $\forall x$ and global exponential stability follows immediately.
% therefore $\overline{x}$ is asymptotically stable. The equilibrium point $\overline{x}$ is in the interior of the action space therefore all agents' gradients with respect to their own action are equal to zero, i.e., $F(\overline{x})=0$. From Assumption \ref{asmp:cvxAction} each agents cost function is convex with respect to their action. If the gradient $\nabla_i J_i(x) = 0$ for player $i$ then agent $i$ can't select any action from $\Omega_i$ to decrease their cost. All agents have $\nabla_i J_i(x) = 0$ so no agent can unilaterally deviate from their current action and therefore $\overline{x}$ is a Nash equilibrium.
\end{proof}

\section{NE Seeking Dynamics over a Graph}\label{graphCommunication}

In this section we consider the following question: how can we modify the gradient dynamics  (\ref{eq:gradient})  such that it converges to  NE  in a networked information setting, over some connected communication graph $G_c$? 

We propose a new augmented gradient dynamics, derived based on the reformulation as a multi-agent agreement problem between the players. We endow each player (agent) with an auxiliary state  that provides an  \emph{estimate} of all other players' actions.  We design a new signal for each player, based on the relative  feedback from its neighbours,  such that these \emph{estimates} agree one with another. 
%For each agent we combine its gradient-type  dynamics with an integrator-type auxiliary dynamics, driven by some control signal.

%We consider now the case where the communication graph is not complete. If agents don't have knowledge of the other agents actions but still know their own cost function, agents are still able to use the gradient. The agents will now require some communication to get the actions of the other agents. To overcome this problem all agents create an \emph{estimate} of the other agents actions and then update their estimate via some communication graph.
%This assumption ensures that agents are able to learn the actions used by all other agents. Without this assumption agents have no guarantee that they will reach a Nash equilibrium.

 Thus assume that player (agent) $i$ maintains an estimate vector $\mathbf{x}^i = [(\mathbf{x}_1^i)^T,\dotsc,(\mathbf{x}_N^i)^T]^T \in \Omega$ where $\mathbf{x}_j^i$ is player $i$'s estimate of player $j$'s action and $\mathbf{x}_i^i = x_i$ is player $i$'s actual action. $\mathbf{x}_{-i}^i$ represents player $i$'s estimate vector without its own action, $\mathbf{x}_i^i$. All agents' vectors are stacked into a single vector $\mathbf{x} = [(\mathbf{x}^1)^T,\dotsc,(\mathbf{x}^N)^T]^T \in \prod_{i\in\mathcal{I}} \Omega = \Omega^N =\reals^{Nn}$.  Note that  the state space  is now $\Omega^N = \prod_{i\in\mathcal{I}} \Omega = \reals^{Nn}$. 
  %
 % The total dimension of the expanded state space is $Nn$. 
 %If all agents have the same estimate  and this $\mathbf{x}^*$ is the NE then $\mathbf{x}^* = \mathbf{1}_N\otimes x^*$ where $x^* \in X^*$.
%LATER 
In the enlarged space the estimate components will be different initially, but in the limit all players estimate vectors should be in consensus. 
We modify the gradient dynamics such that player $i$ updates $\mathbf{x}_i^i$ to reduce its own cost function and updates $\mathbf{x}^i_{-i}$ to reach a consensus with the other players. Let  each player combine its gradient-type  dynamics with an integrator-type auxiliary dynamics, driven by some control signal, 
\begin{align}
	\label{eq:baseAgentExpFiner}
	\widetilde{\Sigma}_i &: \begin{cases}
		\begin{bmatrix}
			\dot{x}_i \\
			\dot{\mathbf{x}}^i_{-i}
		\end{bmatrix}
		= \begin{bmatrix}
			-\nabla_i J_i(x_i, \mathbf{x}^i_{-i})  \\
			 0
		\end{bmatrix} + B^i  \mathbf{u}_i\\
		\mathbf{y}_i = (B^i)^T\mathbf{x}^i
	\end{cases}
\end{align} 
where $B^i$ is a full rank ${n\times n}$ matrix. For each player,  $\mathbf{u}_i \in \reals^{n}$ is to be designed based on the relative output feedback from its neighbours,  such that $ \mathbf{x}^i = \mathbf{x}^j$, for all $i,j$, and converge to the NE $x^*$.

Thus we have reformulated the design of NE dynamics over $G_c$ as a multi-agent agreement problem. We note that agent dynamics (\ref{eq:baseAgentExpFiner}) are  heterogenous, separable, but  do not satisfy an individual passivity property as typically assumed in multi-agent literature, e.g. \cite{BP15}, \cite{A07}.  We show next that  a  Laplacian-type feedback can be designed  under strict incremental passivity  of the pseudo-gradient.  %LATER In the networked game context considered herein, such individual convexity assumptions are too restrictive.  However 
%LATER Under this assumption, we prove later on that the resulting gradient-dynamics converges to the consensus subspace and moreover the action components converge to the NE of the game. 

To proceed, we first analyze properties of $\widetilde{\Sigma}_i$ and the overall agents' dynamics $\widetilde{\Sigma}$. Write $\widetilde{\Sigma}_i$ (\ref{eq:baseAgentExpFiner}) in a compact form
\begin{align}
 \label{eq:baseAgentExp} 
	\widetilde{\Sigma}_i &: \begin{cases}
		\dot{\mathbf{x}}^i = -\mathcal{R}_i^T \nabla_i J_i(\mathbf{x}^i) + B^i\mathbf{u}_i \\
		\mathbf{y}_i = (B^i)^T\mathbf{x}^i
	\end{cases}
\end{align}\vspace{-0.2cm}
where 
\begin{align} \label{eq:actualStratREMatrix}
\mathcal{R}_i = [\mathbf{0}_{n_i\times n_{<i}} \,\, I_{n_i} \, \, \mathbf{0}_{n_i\times n_{>i}}] 
\end{align}
and  $n_{<i} = \sum_{j<i\ i,j\in\mathcal{I}} n_j$, $n_{>i} = \sum_{j>i\ i,j\in\mathcal{I}} n_j$. 

Thus $\mathcal{R}_i^T \in \reals^{n\times n_i}$ aligns the gradient to the action component in $\dot{\mathbf{x}}_i$.
From (\ref{eq:baseAgentExp}), with $\mathbf{x} = [(\mathbf{x}^1)^T,\dotsc,(\mathbf{x}^N)^T]^T $, $\mathbf{u} =[\mathbf{u}^T_1,\dots,\mathbf{u}^T_N]^T \in \reals^{Nn}$, the overall agents' dynamics denoted by $\widetilde{\Sigma}$ can be written in stacked form as
\begin{align}\label{eq:baseAgent}
	\widetilde{\Sigma} : \begin{cases}
		\dot{\mathbf{x}} = -\mathcal{R}^T\mathbf{F}(\mathbf{x}) + B\mathbf{u} \\
		\mathbf{y} = B^T\mathbf{x}
	\end{cases}
\end{align}% \in\reals^{(Nn) \times (Nn)}
where  $\mathcal{R} = diag(\mathcal{R}_1, \dots, \mathcal{R}_N)$, $B =diag(B^1,\dots,B^N)$ and \begin{align} \label{eq:expPsuedoGrad}
\begin{aligned}
	\mathbf{F}(\mathbf{x}) &= [\nabla_1 J^T_1(\mathbf{x}^1),\dotsc,\nabla_N J^T_N(\mathbf{x}^N)]^T
\end{aligned}
\end{align}
is the continuous extension of the pseudo-gradient $F$, (\ref{eq:expPsuedoGrad_F}) to the augmented space, $\mathbf{F}(\mathbf{x}) : \Omega^N \to \reals^n$. Note that  $\mathbf{F}(\mathbf{1}_N \otimes x) = F(x)$. In the rest of the paper we consider one of the following two assumptions on the extended $\mathbf{F}$.
\begin{asmp} \label{asmp:strongPseudo}%[Expanded pseudo-gradient monotone]

\begin{itemize}
\item [(i)] 	The extended pseudo-gradient $\mathbf{F}$ is monotone, %:\Omega^N \to \reals^n$ %CHGED strictly to just montonone
$%\begin{align*}
		(x -x')^T(\mathbf{F}(\mathbf{x})-\mathbf{F}(\mathbf{x}')) \geq 0, \, \forall \mathbf{x}, \mathbf{x}' \in \Omega^N
$.%	\end{align*}
%\begin{asmp} \label{asmp:strongPseudo}(ii)%[Expanded pseudo-gradient monotone]

\item [(ii)] 	The extended pseudo-gradient $\mathbf{F}$ is Lipschitz continuous, % locally around the consensus subspace, %:\Omega^N \to \reals^n
$ % \begin{align*}
		\| \mathbf{F}(\mathbf{x})-\mathbf{F}(\mathbf{x}') \| \leq  \theta \| \mathbf{x}-\mathbf{x}' \|, \,\forall \mathbf{x}, \mathbf{x}' \in  \Omega^N 
$%		\end{align*}
where $\theta >0$.
\end{itemize}
%where $B_r(x) = \{ \mathbf{x} \, | \,  \|\mathbf{x} -\mathbf{1}_N\otimes x\| < r \}$.
%\end{asmp}
\end{asmp}
\begin{remark}
We compare this assumption to similar ones used  in distributed optimization and in  multi-agent coordination control, respectively.  
First, note that Assumption \ref{asmp:strongPseudo}(i) on the extended pseudo-gradient $\mathbf{F}$ holds under individual joint convexity of  each $J_i$ with respect to the full argument. In distributed optimization problems, each objective function is assumed to be strictly (strongly) jointly convex  in the full vector $\mathbf{x}$ and its gradient to be Lipschitz continuous, e.g. \cite{WE11}. Similarly, in multi-agent coordination control, it is standard to assume that  individual agent dynamics are separable and strictly (strongly) incrementally passive, e.g. \cite{BP15}. However, in a game context the individual joint convexity of  $J_i$ with respect to the full argument is too restrictive, unless we have a trivial game with separable cost functions. In general, $J_i$ is coupled to other players' actions while each player has under its control only its own action. This is a key difference versus distributed optimization or multi-agent coordination, one which introduces technical challenges. However, we show that under the monotonicity Assumption \ref{asmp:strongPseudo}(i) on $\mathbf{F}$, the overall  $\widetilde{\Sigma}$ (\ref{eq:baseAgent}) is incrementally passive, hence EIP. Based on this,  we design a new dynamics which converges over \emph{any connected} $G_c$ (Theorem \ref{thm:strongPseudoComm}).   Under the weaker Lipschitz Assumption \ref{asmp:strongPseudo}(ii) on $\mathbf{F}$ and  Assumption \ref{asmp:PseudoGradMono}(ii) on $F$,  we show that the new dynamics  converges over \emph{any sufficiently connected} $G_c$ (Theorem \ref{thm:LipPseudoComm}). 
We also note that Assumption \ref{asmp:strongPseudo}(i) is similar to those used in  \cite{FredIFAC2017},  \cite{WeiShiACC2017}, while Assumption \ref{asmp:strongPseudo}(ii) is weaker.  Assumption \ref{asmp:strongPseudo}(i) is the extension of Assumption \ref{asmp:PseudoGradMono}(i) to the  augmented space, for local communication over the connected graph $G_c$. The weaker Assumption \ref{asmp:strongPseudo}(ii) on $\mathbf{F}$,  is the extension of Lipschitz continuity of $F$ in Assumption \ref{asmp:PseudoGradMono}(ii). We also note that these assumptions could be relaxed to hold only locally around $x^*$ in which case all results become local.
\end{remark}

%Using this assumption on the pseudo-gradient $\mathbf{F}$ the system $\widetilde{\Sigma}$ is EIP.
%\end{remark}
\begin{lemma}  \label{lemma:ForwardEIP}%[Feedforward system, $\widetilde{\Sigma}$, is EIP]
	Under Assumption \ref{asmp:strongPseudo}(i), the overall system $\widetilde{\Sigma}$, (\ref{eq:baseAgent}), is incrementally passive, hence EIP.
\end{lemma}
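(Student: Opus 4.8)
The plan is to exhibit the incremental storage function $V(\mathbf{x},\mathbf{x}') = \tfrac{1}{2}\norm{\mathbf{x}-\mathbf{x}'}^2$ and verify the inequality of Definition \ref{def:IncrP} directly. This $V$ is $C^1$, regular, and positive semi-definite (indeed positive definite in $\mathbf{x}-\mathbf{x}'$), so the only thing left to check is the dissipation inequality $\dot{V} \leq (\mathbf{y}-\mathbf{y}')^T(\mathbf{u}-\mathbf{u}')$ for any two input--trajectory pairs $(\mathbf{u},\mathbf{x})$ and $(\mathbf{u}',\mathbf{x}')$ of $\widetilde{\Sigma}$ in (\ref{eq:baseAgent}). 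Once this is established, EIP follows immediately as the special case obtained by letting $\mathbf{u}',\mathbf{x}',\mathbf{y}'$ be a constant equilibrium condition, exactly as remarked after Definition \ref{def:IncrP}.

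First I would differentiate $V$ along the two trajectories, $\dot{V} = (\mathbf{x}-\mathbf{x}')^T(\dot{\mathbf{x}}-\dot{\mathbf{x}}')$, and substitute the dynamics (\ref{eq:baseAgent}). The input terms combine into $(\mathbf{x}-\mathbf{x}')^T B(\mathbf{u}-\mathbf{u}')$, which, using $\mathbf{y}=B^T\mathbf{x}$, equals precisely the supply rate $(\mathbf{y}-\mathbf{y}')^T(\mathbf{u}-\mathbf{u}')$; note that this identity requires no symmetry of $B$, only $\mathbf{y}=B^T\mathbf{x}$. The pseudo-gradient terms collect into $-(\mathbf{x}-\mathbf{x}')^T \mathcal{R}^T(\mathbf{F}(\mathbf{x})-\mathbf{F}(\mathbf{x}'))$, so it remains to show that $(\mathbf{x}-\mathbf{x}')^T \mathcal{R}^T(\mathbf{F}(\mathbf{x})-\mathbf{F}(\mathbf{x}')) \geq 0$.

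The key step, and the place where the game structure rather than any individual passivity is invoked, is to recognize that $\mathcal{R}$ extracts the action components: since $\mathcal{R}_i \mathbf{x}^i = \mathbf{x}_i^i = x_i$ by (\ref{eq:actualStratREMatrix}), we have $\mathcal{R}\mathbf{x} = x$, the true action profile. Hence, writing $x = \mathcal{R}\mathbf{x}$ and $x' = \mathcal{R}\mathbf{x}'$, the dissipation term becomes $(\mathbf{x}-\mathbf{x}')^T \mathcal{R}^T(\mathbf{F}(\mathbf{x})-\mathbf{F}(\mathbf{x}')) = (x-x')^T(\mathbf{F}(\mathbf{x})-\mathbf{F}(\mathbf{x}'))$, which is exactly the expression that Assumption \ref{asmp:strongPseudo}(i) forces to be non-negative. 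Substituting back gives $\dot{V} \leq (\mathbf{y}-\mathbf{y}')^T(\mathbf{u}-\mathbf{u}')$, establishing incremental passivity, and EIP as noted above.

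The only genuine subtlety is the observation in the last paragraph that the estimate (off-diagonal) components of $\mathbf{x}-\mathbf{x}'$ are annihilated by $\mathcal{R}^T$, so that only the action difference $x-x'$ pairs with $\mathbf{F}(\mathbf{x})-\mathbf{F}(\mathbf{x}')$ and the monotonicity assumption applies verbatim; the rest is routine algebra. I expect this alignment, that $\mathcal{R}^T$ injects the pseudo-gradient solely into the action channel while $B$ defines the matched output $\mathbf{y}=B^T\mathbf{x}$, to be the crux of the argument, precisely because the individual subsystems $\widetilde{\Sigma}_i$ are not themselves passive and the dissipation can only be recovered at the level of the interconnected system.
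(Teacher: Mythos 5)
Your proposal is correct and follows essentially the same route as the paper's own proof: the same incremental storage function $V(\mathbf{x},\mathbf{x}') = \frac{1}{2}\|\mathbf{x}-\mathbf{x}'\|^2$, the same identification $\mathcal{R}\mathbf{x}=x$ collapsing the dissipation term to $-(x-x')^T(\mathbf{F}(\mathbf{x})-\mathbf{F}(\mathbf{x}'))$, and the same invocation of Assumption \ref{asmp:strongPseudo}(i) to bound $\dot{V}$ by the supply rate $(\mathbf{y}-\mathbf{y}')^T(\mathbf{u}-\mathbf{u}')$. You correctly pinpoint the crux---that $\mathcal{R}^T$ channels the pseudo-gradient into the action components only, so monotonicity of the extended pseudo-gradient suffices even though the individual subsystems $\widetilde{\Sigma}_i$ are not passive---which is exactly the step in (\ref{eq:dotV_EIP}).
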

\begin{proof}
Consider two inputs  $\mathbf{u}$, $\mathbf{u}'$ and  let $\mathbf{x}$, $\mathbf{x}'$, $\mathbf{y}$, $\mathbf{y}'$ be the trajectories and outputs of $\widetilde{\Sigma}$ (\ref{eq:baseAgent}). Let the storage function be $V(\mathbf{x},\mathbf{x}') = \frac{1}{2}\|\mathbf{x}-\mathbf{x}'\|^2$. Then, along solutions of (\ref{eq:baseAgent}),
\begin{align}\label{eq:dotV_EIP}
\dot{V} & =-(\mathbf{x}-\mathbf{x}')^T\left [\mathcal{R}^T(\mathbf{F}(\mathbf{x}) - \mathbf{F}(\mathbf{x}')) + B(\mathbf{u} - \mathbf{u}')\right ]  \notag\\
 &= -(x-x')^T \, (\mathbf{F}(\mathbf{x}) - \mathbf{F}(\mathbf{x}')) +(\mathbf{y} - \mathbf{y}')^T(\mathbf{u} - \mathbf{u}') 
\end{align}
by using $\mathcal{R} = diag(\mathcal{R}_1, \dots, \mathcal{R}_N)$ and (\ref{eq:actualStratREMatrix}). Using  Assumption \ref{asmp:strongPseudo}(i) it follows that 
\begin{align*}
\dot{V} \leq  (\mathbf{y} - \mathbf{y}')^T(\mathbf{u} - \mathbf{u}') 
\end{align*}
Thus by Definition \ref{def:IncrP}, $\widetilde{\Sigma}$, is incrementally passive, hence EIP.
%NO NEED for EIP get increm passive HERE YET 
%	Let $\bar{\mathbf{x}}\in {\Omega}^N$ be an equilibrium point of (\ref{eq:baseAgent}) for input $\bar{u}$, hence $B\bar{u}=\mathcal{R}^T\mathbf{F}(\bar{\mathbf{x}})$. Let the storage function be $V(\mathbf{x}) = \frac{1}{2}\|\mathbf{x}-\bar{\mathbf{x}}\|^2$ where $\mathbf{x}\in {\Omega}^N$. Then along the solutions of (\ref{eq:baseAgent}),\begin{align*}\dot{V} &=(\mathbf{x}-\bar{\mathbf{x}})^T(-\mathcal{R}^T\mathbf{F}(\mathbf{x}) + B\mathbf{u} + \mathcal{R}^T\mathbf{F}(\bar{\mathbf{x}}) - B\bar{u}) \\& =-(\mathbf{x}-\bar{\mathbf{x}})^T\mathcal{R}^T(\mathbf{F}(\mathbf{x}) - \mathbf{F}(\bar{\mathbf{x}})) + (\mathbf{x} - \bar{\mathbf{x}})^TB(\mathbf{u} - \bar{u}) \end{align*}Hence  using $\mathcal{R} = diag(\mathcal{R}_1, \dots, \mathcal{R}_N)$ and (\ref{eq:actualStratREMatrix}), yields\begin{align*}\dot{V} &= -(x-\bar{x})^T \, (\mathbf{F}(\mathbf{x}) - \mathbf{F}(\bar{\mathbf{x}})) +(y - \bar{y})^T(\mathbf{u} - \bar{u}) \\&\leq (y - \bar{y})^T(\mathbf{u} - \bar{u})\end{align*}by Assumption \ref{asmp:strongPseudo}(i). 
\end{proof}

\subsection{Distributed feedback design} \label{commGradDyn}

Given agent dynamics  $\widetilde{\Sigma}_i$, (\ref{eq:baseAgentExp}),  for each individual player we design  $\mathbf{u}_i \in \reals^{n}$ based on the relative output feedback from its neighbours,  such that the auxiliary state variables (estimates) agree one with another and converge to the NE $x^*$. For simplicity, take  $B = I_{Nn}$ so that $\mathbf{y}  =\mathbf{x}$. 
%, hence such that all agents reach the consensus subspace, $\mathbf{x}^i = \mathbf{x}^j$, for all $i,j \in \mathcal{I}$. 

Let $\mathcal{N}_i$ denote the set of neighbours of player $i$ in graph $G_c$ and $L$ denote the symmetric Laplacian matrix. Let $\mathbf{L} = L\otimes I_n$ denote the augmented Laplacian matrix which satisfies % is also positive semi-definite and has the following properties similar to $L$.
\begin{align}\label{eq:nullExpL}
	Null(\mathbf{L}) = Range(\mathbf{1}_N \otimes I_n)	
\end{align}
and $Range(\mathbf{L}) = Null(\mathbf{1}_N^T \otimes I_n) $, based on   $L\mathbf{1}_N=\mathbf{0}_N$. For any $ W\in \reals^{q\times n}$,  and any $\mathbf{x} \in \reals^{Nn}$, using  $L\mathbf{1}_N = \mathbf{0}_N$,
\begin{align}
	(\mathbf{1}_N^T\otimes W)\mathbf{L} \mathbf{x} &=( (\mathbf{1}_N^TL)\otimes(W I_n))\mathbf{x} = \mathbf{0}_q \label{eq:LIeqZero}
\end{align}
%LATER IF YOU WANT
%	Range(\mathbf{1}_N \otimes I_n)^{\perp} &= Null(\mathbf{1}_N^T \otimes I_n)	\label{eq:expLNullPerpRange}
% Since  $\mathbf{L}$ is symmetric, $Range(\mathbf{L}) = Null(\mathbf{L})^{\perp}$ and using (\ref{eq:nullExpL}) yields (\ref{eq:rangeNullExpL}). 

With respect to the overall dynamics  $\widetilde{\Sigma}$, (\ref{eq:baseAgent}), the objective is to design $\mathbf{u}$ such that $\mathbf{x} $ reaches consensus, i.e., $\mathbf{1}_N \otimes x$, for some $x \in \Omega$ and $x$ converges towards the NE $x^*$. The consensus  condition is written as $\mathbf{L}\mathbf{x}=0$.  Since   $\widetilde{\Sigma}$, (\ref{eq:baseAgent}) is incrementally passive by Lemma \ref{lemma:ForwardEIP}, and $\mathbf{L}$ is positive semi-definite, a passivity-based control design,  e.g. \cite{A07}, suggests taking $\mathbf{u} = -\mathbf{L} \mathbf{x}$. The resulting closed-loop system which represents the new overall system dynamics $\widetilde{\mathcal{P}}$ is given in stacked notation as
 \begin{align} \label{eq:overallDyn}
	\widetilde{\mathcal{P}} :& \quad % \begin{cases}
 		\dot{\mathbf{x}} = -\mathcal{R}^T\mathbf{F}(\mathbf{x}) - \mathbf{L}\mathbf{x}
%		  \\
%	 	\mathbf{y} &= \mathbf{x}
 %	\end{cases} 
\end{align}
shown in Figure \ref{fig:expFeedback} as the  feedback interconnection between  $\widetilde{\Sigma}$ and  $\mathbf{L}$. 
Local solutions of  (\ref{eq:overallDyn}) exist by Assumption \ref{asmp:Jsmooth}(i). %We study their asymptotic convergence properties in the next section.
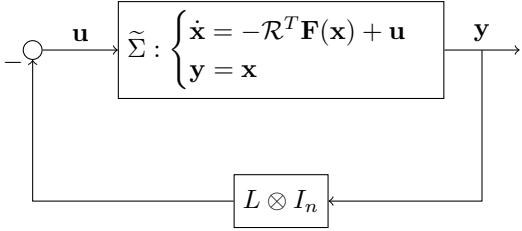
\begin{figure}[h!]
  \centering
\begin{tikzpicture}[auto, node distance=1cm]
    \node [input, name=input] {};
    \node [sum, right = of input] (sum) {};
    \node [block, right = of sum] (dynamics) {$	\widetilde{\Sigma} : 				\begin{cases}
		\dot{\mathbf{x}} = -\mathcal{R}^T\mathbf{F}(\mathbf{x}) + \mathbf{u}
		 \\
		\mathbf{y} = \mathbf{x}
	\end{cases}$};
    \node [output, right = of dynamics] (output) {};    
	\node [block, below = of dynamics] (game) {$ L \otimes I_n$};

%    \draw [draw,->] (input) -- node {$0$} (sum);
    \draw [->] (sum) -- node {$\mathbf{u}$} (dynamics);
    \draw [->] (dynamics) -- node [name=y] {$\mathbf{y}$}(output);
    \draw [->] (y) |- node [anchor=south east] {} (game);
    \draw [->] (game) -| node[pos=0.99] {$-$} 
        node [anchor=south west] {} (sum);
\end{tikzpicture}
	\caption{Augmented gradient dynamics (\ref{eq:overallDyn}) over $G_c$}
	\label{fig:expFeedback}
\end{figure}
The new individual player dynamics $\widetilde{\mathcal{P}}_i$ are
 \begin{align}
	\widetilde{\mathcal{P}}_i :& \quad % \begin{cases}
	\dot{\mathbf{x}}^i = -\mathcal{R}_i^T\nabla_i J_i(\mathbf{x}^i) - \sum_{j\in\mathcal{N}_i}( \mathbf{x}^i - \mathbf{x}^j ) \label{eq:expandedIndividual}
	 %\\
	 %	\mathbf{y}_i &= \mathbf{x}^i
%	\end{cases}
\end{align}or, separating the action $\mathbf{x}^i_i=x_i$ and  estimate $\mathbf{x}^i_{-i}$  dynamics,
\begin{align} \label{eq:agentActionEstimate}
	\widetilde{\mathcal{P}}_i : \begin{cases}
	\begin{bmatrix}
	\dot{x}_i \\
	\dot{\mathbf{x}}^i_{-i}
	\end{bmatrix} =
	\begin{bmatrix}
	-\nabla_i J_i(x_i,\mathbf{x}^i_{-i}) - \mathcal{R}_i\sum_{j\in\mathcal{N}_i} ( \mathbf{x}^i - \mathbf{x}^j) \\
	-\mathcal{S}_i(\sum_{j\in\mathcal{N}_i} \mathbf{x}^i - \mathbf{x}^j)
	\end{bmatrix}
	% \\
	%y_i = \mathbf{x}^i 
	\end{cases}
\end{align}\vspace{-0.2cm}
where  \begin{align}\label{eq:actualSMatrix}
\mathcal{S}_i &= \begin{bmatrix}
	I_{n<i} & \mathbf{0}_{n<i \times n_i} & \mathbf{0}_{n<i \times n>i} \\
	\mathbf{0}_{n>i \times n<i} & \mathbf{0}_{n>i \times n_i} & I_{n>i} 
\end{bmatrix}\end{align}
 and $\mathcal{S}_i$ removes $\mathbf{x}^i_i=x_i$, its own action component from agent $i$'s estimate vector, $\mathbf{x}^i$. 

For player $i$, $\widetilde{\mathcal{P}}_i$, (\ref{eq:expandedIndividual}) or (\ref{eq:agentActionEstimate}) is  clearly distributed over $G_c$. Its input is the relative difference between its estimate and its neighbours'. In standard consensus terms, agent $i$ can use this information to move in the direction of the average value of its neighbours, while the gradient term enforces the move towards minimizing its own cost. 
%\begin{remark}The reason for selecting $\widetilde{\Pi}$ as the Laplacian is because it is a commonly used method for consensus problems. The term $L_ix = \sum_{j\in\mathcal{N}_i} (x_i-x_j)$ takes the difference between agent $i$'s value and all agent $i$'s neighbors. Agent $i$ can use this information to move in the direction of the average value of its neighbors. If the graph is connected then $\dot{x} = -Lx$ will converge to a consensus because L is PSD and is only zero at consensus. Therefore it is a reasonable choose for $\widetilde{\Pi}$.\end{remark}
Compared to the gossip-based algorithm in \cite{FPAuto2016},  the action part of  (\ref{eq:agentActionEstimate}) has an extra correction term. This term is instrumental in proving convergence on a single timescale as  shown in Section \ref{convergence}.

The next result shows that the equilibrium of (\ref{eq:overallDyn}) or (\ref{eq:agentActionEstimate})  occurs when the agents are at a consensus and at NE. %, hence does not change from the perfect communication case.
\begin{lemma} \label{lemma:eqIsConsensus}
	Consider  a game $\mathcal{G}(\mathcal{I},J_i,\Omega_i)$ over a communication graph $G_c$ under Assumptions \ref{asmp:CxnGraph}, \ref{asmp:Jsmooth}(i). Let the dynamics for each agent  $\widetilde{\mathcal{P}}_i$ be as in  (\ref{eq:expandedIndividual}), (\ref{eq:agentActionEstimate}), or overall $\widetilde{\mathcal{P}}$,  (\ref{eq:overallDyn}). At an equilibrium point $\overline{\mathbf{x}}\in {\Omega}^N$ the estimate vectors of all players are equal $\bar{\mathbf{x}}^i=\bar{\mathbf{x}}^j$, $\forall i,j\in\mathcal{I}$ and equal to the Nash equilibrium profile $x^*$, hence the action components of all players  coincide with the optimal actions, $\bar{\mathbf{x}}^i_i = x^*_i$, $\forall i\in\mathcal{I}$.
\end{lemma}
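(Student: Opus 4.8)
The plan is to set $\dot{\mathbf{x}} = 0$ in the closed-loop dynamics (\ref{eq:overallDyn}) and extract the two conclusions — consensus and optimality — by separating the Laplacian contribution from the gradient contribution. Writing the equilibrium condition at $\overline{\mathbf{x}}$ as $\mathcal{R}^T\mathbf{F}(\overline{\mathbf{x}}) + \mathbf{L}\overline{\mathbf{x}} = 0$, the first step is to project onto the consensus direction by left-multiplying with $\mathbf{1}_N^T \otimes I_n$. The Laplacian term vanishes by (\ref{eq:LIeqZero}) with $W = I_n$, leaving only $(\mathbf{1}_N^T \otimes I_n)\mathcal{R}^T\mathbf{F}(\overline{\mathbf{x}}) = 0$.

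The key algebraic observation — and the step I expect to require the most care — is that $(\mathbf{1}_N^T \otimes I_n)\mathcal{R}^T\mathbf{F}(\mathbf{x}) = \mathbf{F}(\mathbf{x})$ for every $\mathbf{x}$. Indeed, by the block-diagonal form $\mathcal{R} = diag(\mathcal{R}_1,\dots,\mathcal{R}_N)$ with $\mathcal{R}_i$ as in (\ref{eq:actualStratREMatrix}), the $i$-th block of $\mathcal{R}^T\mathbf{F}(\mathbf{x})$ is $\mathcal{R}_i^T\nabla_i J_i(\mathbf{x}^i)$, a vector in $\reals^n$ carrying $\nabla_i J_i(\mathbf{x}^i)$ in its $i$-th slot and zeros elsewhere. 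Left-multiplication by $\mathbf{1}_N^T \otimes I_n$ sums these $N$ blocks, and since block $i$ contributes only to coordinate-block $i$, the sum reassembles exactly $\mathbf{F}(\mathbf{x})$ by (\ref{eq:expPsuedoGrad}). Hence the projected equilibrium condition reads $\mathbf{F}(\overline{\mathbf{x}}) = 0$.

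Having established $\mathbf{F}(\overline{\mathbf{x}}) = 0$, I would substitute this back into the full equilibrium condition to obtain $\mathbf{L}\overline{\mathbf{x}} = 0$. By the nullspace characterization (\ref{eq:nullExpL}), this forces $\overline{\mathbf{x}} \in Range(\mathbf{1}_N \otimes I_n)$, i.e. $\overline{\mathbf{x}} = \mathbf{1}_N \otimes \overline{x}$ for some $\overline{x} \in \reals^n$; this is precisely the consensus statement $\overline{\mathbf{x}}^i = \overline{x}$ for all $i \in \mathcal{I}$. Finally, using $\mathbf{F}(\mathbf{1}_N \otimes x) = F(x)$, the relation $\mathbf{F}(\overline{\mathbf{x}}) = 0$ becomes $F(\overline{x}) = 0$, which by (\ref{eq:ViNash_inner}) identifies $\overline{x}$ as a Nash equilibrium $x^*$ of $\mathcal{G}(\mathcal{I},J_i,\Omega_i)$. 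Reading off coordinate-block $i$ of the consensus vector then yields $\overline{\mathbf{x}}^i_i = x^*_i$, the optimal action for each player. The argument is essentially algebraic; the only genuinely delicate point is verifying the $\mathcal{R}^T$ identity, after which both consensus and optimality follow by substitution.
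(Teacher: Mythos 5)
Your proposal is correct and follows essentially the same route as the paper's own proof: project the equilibrium condition onto the consensus direction via $\mathbf{1}_N^T\otimes I_n$ to get $\mathbf{F}(\overline{\mathbf{x}})=0$, substitute back to get $\mathbf{L}\overline{\mathbf{x}}=0$ and hence consensus by (\ref{eq:nullExpL}), then conclude $F(\overline{x})=0$ and $\overline{x}=x^*$ via (\ref{eq:ViNash_inner}). The only difference is that you spell out the block computation behind $(\mathbf{1}_N^T\otimes I_n)\mathcal{R}^T\mathbf{F}(\mathbf{x})=\mathbf{F}(\mathbf{x})$, which the paper leaves as a one-line simplification.
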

\begin{proof}
Let $\overline{\mathbf{x}}$ denote an equilibrium of  (\ref{eq:overallDyn}),  
\begin{align}
\label{eq:LeqPoint}
\mathbf{0}_{Nn} &= -\mathcal{R}^T\mathbf{F}(\bar{\mathbf{x}}) - \mathbf{L}\bar{\mathbf{x}}
\end{align}
Pre-multiplying both sides by $(\mathbf{1}_N^T\otimes I_n)$, yields 
$
\mathbf{0}_{n} = -(\mathbf{1}_N^T\otimes I_n)\mathcal{R}^T\mathbf{F}(\bar{\mathbf{x}})
$,  where $(\mathbf{1}_N^T\otimes I_n) \mathbf{L}\bar{\mathbf{x}} =0$ was used by (\ref{eq:nullExpL}).  Using (\ref{eq:actualStratREMatrix}) and simplifying  $(\mathbf{1}_N^T\otimes I_n)\mathcal{R}^T$ gives
\begin{align}
\mathbf{0}_{n} &= \mathbf{F}(\bar{\mathbf{x}}), \qquad \text{or} \qquad \nabla_i J_i(\bar{\mathbf{x}}^i) =0, \,\, \forall i \in\mathcal{I} \label{eq:EqExPsGrad}
\end{align}
by (\ref{eq:expPsuedoGrad}).  Substituting (\ref{eq:EqExPsGrad}) into (\ref{eq:LeqPoint}) results in
$\mathbf{0}_{Nn} = - \mathbf{L}\bar{\mathbf{x}}$. 
From this it follows that $\bar{\mathbf{x}}^i = \bar{\mathbf{x}}^j$, $\forall i,j\in\mathcal{I}$ by Assumption \ref{asmp:CxnGraph} and  (\ref{eq:nullExpL}). Therefore   $\bar{\mathbf{x}} = \mathbf{1}_N\otimes \bar{x}$, for some $ \bar{x} \in {\Omega}$. 
%\begin{align}	\label{eq:xbarConsensus}\bar{\mathbf{x}} = \mathbf{1}_N\otimes \bar{x}\end{align}
Substituting this back into (\ref{eq:EqExPsGrad}) yields 
$	\mathbf{0}_{n} = \mathbf{F}(\mathbf{1}_N\otimes \bar{x})$
or $\nabla_i J_i(\bar{x}) =0$, for  all $ i \in\mathcal{I}$. 
Using (\ref{eq:expPsuedoGrad}), $\nabla_i J_i(\bar{x}_i,\bar{x}_{-i}) =0$, for all $i \in\mathcal{I}$, or 
$%\begin{align}
	\mathbf{0}_{n} = F(\bar{x}) \label{eq:expandNECondition}
$. %\end{align}
Therefore by (\ref{eq:ViNash_inner}) $\bar{x} =x^*$, hence $\bar{\mathbf{x}} = \mathbf{1}_N\otimes x^*$ and for all $i,j \in\mathcal{I}$, $\bar{\mathbf{x}}^i = \bar{\mathbf{x}}^j=x^*$ the NE of the game. %x \in X^* later
\end{proof}

\section{Convergence Analysis}\label{convergence}

In this section we analyze the convergence of player's new dynamics $\widetilde{\mathcal{P}}_i$  (\ref{eq:expandedIndividual}), (\ref{eq:agentActionEstimate}) or overall $\widetilde{\mathcal{P}}$   (\ref{eq:overallDyn})  to the NE of the game, over a connected graph $G_c$.     We consider two cases. 

In  Section \ref{sec:singleTime} we analyze convergence of (\ref{eq:agentActionEstimate}) on a single timescale:  in Theorem \ref{thm:strongPseudoComm} under Assumptions \ref{asmp:CxnGraph}, \ref{asmp:Jsmooth}(i),  \ref{asmp:PseudoGradMono}(i) and \ref{asmp:strongPseudo}(i), and  in Theorem \ref{thm:LipPseudoComm} under Assumptions \ref{asmp:CxnGraph}, \ref{asmp:Jsmooth}(i), \ref{asmp:PseudoGradMono}(ii) and \ref{asmp:strongPseudo}(ii). We exploit  the incremental passivity (EIP) property of $\widetilde{\Sigma}$ (\ref{eq:baseAgent}) (Lemma \ref{lemma:ForwardEIP}) and diffusive properties of the Laplacian. 

In Section \ref{sec:twoTime}, we modify the estimate component of the dynamics (\ref{eq:agentActionEstimate}) to be much faster, and in Theorem \ref{thm:STRGMonPseudoComm} prove convergence under  Assumptions  \ref{asmp:CxnGraph}, \ref{asmp:Jsmooth}(i), \ref{asmp:PseudoGradMono}(ii) and \ref{asmp:strongPseudo}(ii), using a two-timescale singular perturbation approach.

\subsection{Single-Timescale  Consensus and Player Optimization}\label{sec:singleTime}

Theorem \ref{thm:strongPseudoComm} shows that, under Assumption \ref{asmp:strongPseudo}(i), (\ref{eq:agentActionEstimate})  converges to the NE of the game, over \emph{any connected} $G_c$.
\begin{thm} \label{thm:strongPseudoComm}%CHG a bit Ass to monotone tildeF, added strict monotne F
Consider a game $\mathcal{G}(\mathcal{I},J_i, \Omega_i)$ over a communication graph $G_c$ under Assumptions \ref{asmp:CxnGraph}, \ref{asmp:Jsmooth}(i), \ref{asmp:PseudoGradMono}(i) and \ref{asmp:strongPseudo}(i).  Let each player's dynamics $\widetilde{\mathcal{P}}_i$, be as in (\ref{eq:expandedIndividual}), (\ref{eq:agentActionEstimate}), or overall $\widetilde{\mathcal{P}}$,   (\ref{eq:overallDyn}), as in Figure \ref{fig:expFeedback}.  Then, any solution of (\ref{eq:overallDyn}) is bounded and asymptotically converges to $\mathbf{1}_N\otimes x^*$, and the actions components converge to the NE of the game, $x^*$. %TBD LATER If  $ \mathbf{F}(\mathbf{x})$ is strongly monotone, then convergence is exponential.
\end{thm}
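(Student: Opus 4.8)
The plan is to exploit the passivity-based structure directly: since $\widetilde{\mathcal{P}}$ in (\ref{eq:overallDyn}) is the feedback interconnection of the EIP system $\widetilde{\Sigma}$ (Lemma \ref{lemma:ForwardEIP}) with the positive semi-definite Laplacian $\mathbf{L}$, I expect the storage function of the interconnection to serve as a Lyapunov function. By Lemma \ref{lemma:eqIsConsensus} the equilibrium is $\overline{\mathbf{x}} = \mathbf{1}_N \otimes x^*$, so I would take the radially unbounded candidate $V(\mathbf{x}) = \tfrac{1}{2}\|\mathbf{x} - \overline{\mathbf{x}}\|^2$, i.e. the same storage function used in Lemma \ref{lemma:ForwardEIP} with the comparison trajectory frozen at the equilibrium $\overline{\mathbf{x}}$.

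First I would differentiate $V$ along (\ref{eq:overallDyn}). Using $\mathbf{F}(\overline{\mathbf{x}}) = \mathbf{F}(\mathbf{1}_N\otimes x^*) = F(x^*) = 0$ from (\ref{eq:EqExPsGrad})--(\ref{eq:ViNash_inner}) and $\mathbf{L}\overline{\mathbf{x}} = 0$ from (\ref{eq:nullExpL}), together with the identity $\mathcal{R}\mathbf{x} = x$ (the stacked action components) already exploited in the proof of Lemma \ref{lemma:ForwardEIP}, I would obtain
\begin{align*}
\dot{V} = -(x - x^*)^T\big(\mathbf{F}(\mathbf{x}) - \mathbf{F}(\overline{\mathbf{x}})\big) - (\mathbf{x} - \overline{\mathbf{x}})^T\mathbf{L}(\mathbf{x} - \overline{\mathbf{x}}).
\end{align*}
The first term is $\leq 0$ by monotonicity of $\mathbf{F}$ (Assumption \ref{asmp:strongPseudo}(i)) and the second is $\leq 0$ since $\mathbf{L}\succeq 0$, so $\dot{V}\leq 0$. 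Because $V$ is radially unbounded and non-increasing, every trajectory remains in the compact sublevel set $\{V \leq V(\mathbf{x}(0))\}$, which gives boundedness and sets up LaSalle's invariance principle.

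The crux is characterizing $\{\dot{V}=0\}$, and this is where the subtlety lies: the two nonpositive terms must vanish independently. Setting the Laplacian term to zero forces $\mathbf{x} - \overline{\mathbf{x}} \in Null(\mathbf{L}) = Range(\mathbf{1}_N\otimes I_n)$ (here connectivity, Assumption \ref{asmp:CxnGraph}, and (\ref{eq:nullExpL}) are essential), so $\mathbf{x} = \mathbf{1}_N \otimes v$ is a consensus vector for some $v$; in particular its action component is $x = v$ and $\mathbf{F}(\mathbf{x}) = F(v)$. Substituting into the first term then gives $(v - x^*)^T(F(v) - F(x^*)) = 0$, and strict monotonicity of $F$ (Assumption \ref{asmp:PseudoGradMono}(i)) forces $v = x^*$. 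Hence $\{\dot{V}=0\}$ collapses to the single point $\overline{\mathbf{x}} = \mathbf{1}_N\otimes x^*$, which is trivially invariant, and LaSalle yields asymptotic convergence of $\mathbf{x}$ to $\mathbf{1}_N\otimes x^*$, whence the action components converge to the NE $x^*$.

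The main obstacle is precisely the interplay of the two monotonicity hypotheses: monotonicity of $\mathbf{F}$ alone only guarantees $\dot{V}\leq 0$ and cannot by itself exclude an off-NE consensus point, while strict monotonicity of $F$ only controls behaviour on the consensus subspace. The argument works because the Laplacian term confines the invariant set to consensus, and strict monotonicity of $F$ then pins down the correct NE there. Identifying this clean split, rather than attempting a single strict inequality on the full augmented space (which Assumption \ref{asmp:strongPseudo}(i) deliberately does not provide), is the decisive step.
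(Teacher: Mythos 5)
Your proposal is correct and follows essentially the same route as the paper's proof: the identical Lyapunov/storage function $V(\mathbf{x})=\tfrac{1}{2}\|\mathbf{x}-\mathbf{1}_N\otimes x^*\|^2$ built on Lemmas \ref{lemma:ForwardEIP} and \ref{lemma:eqIsConsensus}, the same two-term decomposition of $\dot{V}$ handled by Assumption \ref{asmp:strongPseudo}(i) and positive semi-definiteness of $\mathbf{L}$, and the same LaSalle argument in which the Laplacian term confines the invariant set to consensus and strict monotonicity of $F$ (Assumption \ref{asmp:PseudoGradMono}(i)) pins the consensus value to $x^*$. Your closing observation about why the two monotonicity hypotheses must be split across the consensus and off-consensus directions accurately reflects the structure of the paper's argument.
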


\begin{proof}
By Lemma \ref{lemma:eqIsConsensus}, the  equilibrium   of  $\widetilde{\mathcal{P}}$ (\ref{eq:overallDyn}) is  $\bar{\mathbf{x}} = \mathbf{1}_N\otimes x^*$. 
%Let $\overline{\mathbf{x}}$ denote an equilibrium of (\ref{eq:expandedIndividual}) or \ref{eq:overallDyn}). By Lemma \ref{lemma:eqIsConsensus}, $\bar{\mathbf{x}}^i=\bar{\mathbf{x}}^j$, $\forall i,j\in\mathcal{I}$, and the players' action components in   $\overline{\mathbf{x}}^i$, i.e., $\overline{x}$,  coincides with the NE of the game $x^*$. %an Nash equilibrium point $x^*\in X^*$ of the game.
 We consider the quadratic storage function $V(\mathbf{x}) = \frac{1}{2}\|\mathbf{x}-\bar{\mathbf{x}} \|^2$, $\bar{\mathbf{x}} = \mathbf{1}_N\otimes x^*$ as a Lyapunov function. As in (\ref{eq:dotV_EIP}) in the proof of  Lemma \ref {lemma:ForwardEIP},  using  $\mathbf{u} = -\mathbf{L}\mathbf{x}$, (\ref{eq:LeqPoint}) we obtain that  along the solutions of  (\ref{eq:overallDyn}), 
\begin{align}\label{eq:dotSfull}
	\dot{V} = -(\mathbf{x}-\bar{\mathbf{x}})^T\mathcal{R}^T(\mathbf{F}(\mathbf{x}) - \mathbf{F}(\bar{\mathbf{x}}))-(\mathbf{x}-\bar{\mathbf{x}})^T\mathbf{L}(\mathbf{x}-\bar{\mathbf{x}}) 
\end{align}
where $\bar{\mathbf{x}} = \mathbf{1}_N\otimes x^*$, and  $\mathbf{x}^T\mathcal{R}^T =x$. By Assumption \ref{asmp:strongPseudo}(i)   and since the augmented Laplacian $\mathbf{L}$ is positive semi-definite it follows that  $\dot{V} \leq 0$, for all  $\mathbf{x}\in {\Omega}^N$, hence all trajectories of  (\ref{eq:overallDyn}) are bounded and $\overline{\mathbf{x}}$ is stable. To show convergence we resort to  LaSalle's invariance principle, \cite{K02}. 

From (\ref{eq:dotSfull}), $\dot{V} =0$ when both terms in (\ref{eq:dotSfull}) are zero, i.e., $(\mathbf{x}-\bar{\mathbf{x}})^T\mathcal{R}^T(\mathbf{F}(\mathbf{x}) - \mathbf{F}(\bar{\mathbf{x}}))=0$ and $(\mathbf{x}-\bar{\mathbf{x}})^T\mathbf{L}(\mathbf{x}-\bar{\mathbf{x}}) =0$.  By Assumption \ref{asmp:CxnGraph} and  (\ref{eq:nullExpL}), $(\mathbf{x}-\bar{\mathbf{x}})^T\mathbf{L}(\mathbf{x}-\bar{\mathbf{x}}) =0$ % the second term in (\ref{eq:dotSfull})  is zero 
is equivalent to  $\mathbf{x}-\bar{\mathbf{x}} = \mathbf{1}_N\otimes \underline{x}$, for some $\underline{x} \in \reals^n$. Since at equilibrium $\bar{\mathbf{x}} = \mathbf{1}_N\otimes x^*$, this implies that  $\mathbf{x}=\mathbf{1}_N\otimes x$, for some $x\in \reals^n$.   By  (\ref{eq:expPsuedoGrad}) $\mathbf{F}(\mathbf{1}_N\otimes x) = F(x)$. 
%$(\mathbf{F}(\mathbf{x}) - \mathbf{F}(\bar{\mathbf{x}})) = (\mathbf{F}(\mathbf{1}_N\otimes x) - \mathbf{F}(\mathbf{1}_N\otimes x^*) = F(x)-F(x^*)$
Using $\mathcal{R}$ in (\ref{eq:actualStratREMatrix})  yields for the first term in (\ref{eq:dotSfull}) to be zero, %and $\mathbf{x}=\mathbf{1}_N\otimes x$
\begin{align}\label{eq:dotSconsensus}
0    %&=-(\mathbf{x}-\bar{\mathbf{x}})^T\mathcal{R}^T(\mathbf{F}(\mathbf{x}) - \mathbf{F}(\bar{\mathbf{x}})) \notag\\
	&= -(\mathbf{1}_N\otimes x - \mathbf{1}_N\otimes x^*)^T\mathcal{R}^T[F(x)-F(x^*)]  \notag \\
&= -(x-x^*)^T[F(x)-F(x^*)] < 0 \quad \forall x\neq x^*
\end{align}
where strict inequality follows by Assumption \ref{asmp:PseudoGradMono}(i). Therefore $\dot{V}=0$ in  (\ref{eq:dotSfull}) only if $x = x^*$ and  hence $\mathbf{x} =\mathbf{1}_N\otimes x^*$.  Since $V$ is radially unbounded,  the conclusion follows by LaSalle's invariance principle.
\end{proof}
If  $ \mathbf{F}(\cdot)$ is strongly monotone, exponential convergence can be shown over \emph{any connected} $G_c$. Next we show that,  under a weaker Lipschitz property of $\mathbf{F}$ (Assumption \ref{asmp:strongPseudo}(ii)) and  strong monotonicity  of $F$ (Assumption \ref{asmp:PseudoGradMono}(ii)),  (\ref{eq:agentActionEstimate}) converges  over \emph{any sufficiently connected} $G_c$. % (Theorem \ref{thm:LipPseudoComm}). 
\begin{thm} \label{thm:LipPseudoComm}
Consider a game $\mathcal{G}(\mathcal{I},J_i, \Omega_i)$ over a communication graph $G_c$ under Assumptions \ref{asmp:CxnGraph}, \ref{asmp:Jsmooth}(i), \ref{asmp:PseudoGradMono}(ii) and  \ref{asmp:strongPseudo}(ii).  Let each player's dynamics $\widetilde{\mathcal{P}}_i$ be as in (\ref{eq:expandedIndividual}), (\ref{eq:agentActionEstimate}) or overall $\widetilde{\mathcal{P}}$   (\ref{eq:overallDyn}).  Then, if 
$\lambda_2(L)  >\frac{\theta^2}{\mu} + \theta$,  
%LATER replaced by a single one
% $\lambda_2(L) >\theta$ and $\theta^2 < \mu(\lambda_2(L) -\theta)$, 
any solution of (\ref{eq:overallDyn})  converges asymptotically to $\mathbf{1}_N\otimes x^*$, and the actions components converge to the NE of the game, $x^*$. If $\lambda_2(L)  >\frac{N \theta^2}{\mu} + \theta$, then convergence is exponential.  
\end{thm}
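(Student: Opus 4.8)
The plan is to reuse the quadratic Lyapunov function $V(\mathbf{x})=\tfrac12\|\mathbf{x}-\bar{\mathbf{x}}\|^2$ with $\bar{\mathbf{x}}=\mathbf{1}_N\otimes x^*$ (the equilibrium, by Lemma \ref{lemma:eqIsConsensus}) and the derivative computed exactly as in (\ref{eq:dotSfull}),
\begin{align*}
\dot{V}=-(x-x^*)^T(\mathbf{F}(\mathbf{x})-\mathbf{F}(\bar{\mathbf{x}}))-(\mathbf{x}-\bar{\mathbf{x}})^T\mathbf{L}(\mathbf{x}-\bar{\mathbf{x}}),
\end{align*}
where $x=\mathcal{R}\mathbf{x}$ is the action profile. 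The essential difference from Theorem \ref{thm:strongPseudoComm} is that under Assumption \ref{asmp:strongPseudo}(ii) the extended map $\mathbf{F}$ is only Lipschitz, not monotone, so the first term can no longer be discarded; it must instead be balanced against the diffusive Laplacian term, and this balance is what produces a connectivity condition on $\lambda_2(L)$.

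Next I would split the error $\mathbf{x}-\bar{\mathbf{x}}=\mathbf{z}_\parallel+\mathbf{z}_\perp$ into its consensus part $\mathbf{z}_\parallel\in Range(\mathbf{1}_N\otimes I_n)$ and its disagreement part $\mathbf{z}_\perp\in Null(\mathbf{1}_N^T\otimes I_n)$. Since $\mathbf{L}$ annihilates $\mathbf{z}_\parallel$ and $\mathbf{z}_\perp$ is orthogonal to the consensus subspace, the spectral bound for $L$ gives $(\mathbf{x}-\bar{\mathbf{x}})^T\mathbf{L}(\mathbf{x}-\bar{\mathbf{x}})\ge\lambda_2\|\mathbf{z}_\perp\|^2$, so the network supplies decay proportional to the disagreement. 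For the gradient term I would insert the intermediate point $\mathbf{F}(\mathbf{1}_N\otimes x)=F(x)$ and write
\begin{align*}
-(x-x^*)^T(\mathbf{F}(\mathbf{x})-\mathbf{F}(\bar{\mathbf{x}}))=&-(x-x^*)^T(F(x)-F(x^*))\\
&-(x-x^*)^T(\mathbf{F}(\mathbf{x})-\mathbf{F}(\mathbf{1}_N\otimes x)).
\end{align*}
Strong monotonicity of $F$ (Assumption \ref{asmp:PseudoGradMono}(ii)) bounds the first piece by $-\mu\|x-x^*\|^2$, while Cauchy--Schwarz together with the Lipschitz bound on $\mathbf{F}$ (Assumption \ref{asmp:strongPseudo}(ii)) bounds the second by $\theta\|x-x^*\|\,\|\mathbf{x}-\mathbf{1}_N\otimes x\|$. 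Crucially, $\mathbf{x}-\mathbf{1}_N\otimes x$ vanishes on the consensus subspace, hence depends only on $\mathbf{z}_\perp$ and is controlled by $\|\mathbf{z}_\perp\|$.

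Collecting the estimates, $\dot{V}$ is dominated by a quadratic form in $s=\|x-x^*\|$ and $t=\|\mathbf{z}_\perp\|$,
\begin{align*}
\dot{V}\le-\mu s^2+\theta\,\kappa\,st-\lambda_2 t^2,
\end{align*}
where $\kappa$ comes from bounding $\|\mathbf{x}-\mathbf{1}_N\otimes x\|$ by a multiple of $\|\mathbf{z}_\perp\|$. Completing the square (equivalently, Young's inequality) shows this form is negative semidefinite once $\lambda_2$ exceeds a threshold of order $\theta^2/\mu$; tracking the constants carefully yields the stated $\lambda_2(L)>\tfrac{\theta^2}{\mu}+\theta$. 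With $\dot{V}\le0$ and $V$ radially unbounded, all trajectories are bounded, and I would close the asymptotic case with LaSalle's invariance principle exactly as in Theorem \ref{thm:strongPseudoComm}: on $\{\dot{V}=0\}$ both $s=0$ and $t=0$, which forces $\mathbf{x}=\mathbf{1}_N\otimes x^*$.

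For exponential convergence the target is the stronger estimate $\dot{V}\le-c\|\mathbf{x}-\bar{\mathbf{x}}\|^2=-2cV$ with $c>0$, which needs the quadratic form above to be strictly negative definite and, more importantly, needs its decay to cover the full error rather than only $s$ and $t$. The main obstacle is that $\|\mathbf{x}-\bar{\mathbf{x}}\|^2=\|\mathbf{z}_\parallel\|^2+\|\mathbf{z}_\perp\|^2$ contains the consensus component $\|\mathbf{z}_\parallel\|^2=N\|\bar x-x^*\|^2$, whereas the monotonicity decay is expressed through the single action error $\|x-x^*\|$; bounding $\|\bar x-x^*\|$ via $\|x-x^*\|$ and $\|\mathbf{z}_\perp\|$ (they differ only by a disagreement term) introduces the factor $N$ carried by the $N$ stacked copies in $\mathbf{1}_N\otimes(\cdot)$. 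This is precisely why the exponential result demands the larger connectivity $\tfrac{N\theta^2}{\mu}+\theta$. Throughout, I expect the delicate point to be the merely-Lipschitz, sign-indefinite coupling term $\mathbf{F}(\mathbf{x})-\mathbf{F}(\mathbf{1}_N\otimes x)$, which has no favorable sign and must be dominated by the spectral gap of the Laplacian.
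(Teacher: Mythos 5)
Your architecture is the paper's: same storage function $V(\mathbf{x})=\tfrac12\|\mathbf{x}-\bar{\mathbf{x}}\|^2$, same identity (\ref{eq:dotSfull}), same orthogonal splitting into consensus and disagreement parts, spectral gap of $\mathbf{L}$ against Lipschitz cross terms, a $2\times 2$ quadratic form, and LaSalle. The genuine gap is the step ``tracking the constants carefully yields the stated $\lambda_2(L)>\tfrac{\theta^2}{\mu}+\theta$.'' It fails because of your choice of comparison point $\mathbf{1}_N\otimes x$ with $x=\mathcal{R}\mathbf{x}$ the \emph{action profile}: the constant $\kappa$ in $\|\mathbf{x}-\mathbf{1}_N\otimes x\|\le\kappa\|\mathbf{z}_\perp\|$ is not dimension-free. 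Since the linear map $\mathbf{x}\mapsto\mathbf{x}-(\mathbf{1}_N\otimes I_n)\mathcal{R}\mathbf{x}$ annihilates the consensus subspace, only its restriction to the disagreement subspace matters, and there $\|\mathbf{v}-(\mathbf{1}_N\otimes I_n)\mathcal{R}\mathbf{v}\|^2=\|\mathbf{v}\|^2+N\|\mathcal{R}\mathbf{v}\|^2$ (the cross term vanishes because $(\mathbf{1}_N^T\otimes I_n)\mathbf{v}=0$), while $\|\mathcal{R}\mathbf{v}\|^2$ can reach $\tfrac{N-1}{N}\|\mathbf{v}\|^2$ (take $\mathbf{v}^j_j=a_j$ and $\mathbf{v}^i_j=-a_j/(N-1)$ for $i\ne j$). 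Hence $\kappa=\sqrt{N}$, and this is sharp. Completing the square in $-\mu s^2+\sqrt{N}\,\theta\, st-\lambda_2 t^2$ then requires $\lambda_2(L)>\tfrac{N\theta^2}{4\mu}$, an $N$-dependent threshold that does not match — and for large $N$ strictly exceeds — the stated bound; moreover the additive $\theta$ in the stated threshold cannot arise from your two-term split at all, since your Laplacian coefficient is never degraded. As written, your argument proves a weaker theorem, not Theorem \ref{thm:LipPseudoComm}.

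The paper avoids this by taking the intermediate point to be $\mathbf{x}^{||}=P_C\mathbf{x}$, the orthogonal projection onto the consensus subspace (the replicated \emph{average} of the estimates), for which $\|\mathbf{x}-\mathbf{x}^{||}\|=\|\mathbf{x}^\perp\|$ with constant exactly one, and by splitting the \emph{left} factor $(\mathbf{x}-\bar{\mathbf{x}})^T\mathcal{R}^T$ as well. This yields four terms: $-(\mathbf{x}^\perp)^T\mathcal{R}^T[\mathbf{F}(\mathbf{x})-\mathbf{F}(\mathbf{x}^{||})]\le\theta\|\mathbf{x}^\perp\|^2$, which is precisely what degrades the Laplacian coefficient to $\lambda_2(L)-\theta$ and produces the additive $\theta$; two cross terms each bounded by $\theta\|\mathbf{x}^\perp\|\|x-x^*\|$, giving the off-diagonal $-\theta$ entries of $\Theta$; and the monotonicity term $-\mu\|x-x^*\|^2$, where crucially $x$ is now defined by $\mathbf{x}^{||}=\mathbf{1}_N\otimes x$ (the average), not $\mathcal{R}\mathbf{x}$. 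Applying Assumption \ref{asmp:PseudoGradMono}(ii) at the average is legitimate, and the invariance endgame still closes: $\mathbf{x}^\perp=0$ together with $x=x^*$ forces $\mathbf{x}=\mathbf{1}_N\otimes x^*$. The exponential case then follows from the exact identity $\|x-x^*\|=\tfrac{1}{\sqrt{N}}\|\mathbf{x}^{||}-\bar{\mathbf{x}}\|$ — exact only because $x$ is the average — giving the matrix $\Theta_N$ with $\mu/N$ and the threshold $\tfrac{N\theta^2}{\mu}+\theta$; in your setup, with $s$ the action error, relating $V$ to $(s,t)$ needs an extra triangle inequality that further distorts constants. The repair is to move your comparison and monotonicity point from the actions to the consensus projection and split the left factor as the paper does.
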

\begin{proof} We decompose $\reals^{Nn}$ as $\reals^{Nn} = C_N^n \oplus E_N^n$, into the  consensus subspace $C_N^n =\{ \mathbf{1}_N\otimes x  \, | x \in \reals^n \}$ and its orthogonal complement  $E_N^n $. Let  two projection matrices  be defined as%Inspired by \cite{ABM15}, 
$$P_C = \frac{1}{N}  \mathbf{1}_N \otimes  \mathbf{1}_N^T \otimes I_n, \, \,
P_E = I_{Nn} -  \frac{1}{N}  \mathbf{1}_N \otimes  \mathbf{1}_N^T \otimes I_n
$$
Then any $\mathbf{x}  \in \reals^{Nn}$ can be decomposed as   $ \mathbf{x} = \mathbf{x}^{||} \, + \, \mathbf{x}^{\perp}$, where  $\mathbf{x}^{||}  =  P_C \mathbf{x} \in  C_N^n$ and $\mathbf{x}^{\perp}=  P_E \mathbf{x} \in  E_N^n$, with $ ( \mathbf{x}^{||})^T\mathbf{x}^{\perp} =0$. Thus $\mathbf{x}^{||} = \mathbf{1}_N \otimes x$, for some $x \in \reals^n$, so that  $\mathbf{L} \mathbf{x}^{||} =0$, and $min_{\mathbf{x}^{\perp}\in E_N^n} (\mathbf{x}^{\perp})^T \mathbf{L} \mathbf{x}^{\perp} = \lambda_2(L) \| \mathbf{x}^{\perp} \|^2$, $ \lambda_2(L) >0$.

Consider  $V(\mathbf{x}) =  \frac{1}{2}\|\mathbf{x}-\overline{\mathbf{x}} \|^2 $, $\overline{\mathbf{x}}= \mathbf{1}_N\otimes x^*$,  which using  $ \mathbf{x} = \mathbf{x}^{||} \, + \, \mathbf{x}^{\perp}$  can be written as $V(\mathbf{x}) = \frac{1}{2}\|\mathbf{x}^{\perp}\|^2 +  \frac{1}{2}\|\mathbf{x}^{||}- \overline{\mathbf{x}} \|^2$. % (by $ \mathbf{x} = \mathbf{x}^{||} + \mathbf{x}^{\perp}$)

Then following the same steps as in  Lemma \ref {lemma:ForwardEIP}, and replacing  $\mathbf{x}$ with its decomposed components $\mathbf{x}^\perp, \mathbf{x}^{||}$ a relation similar to (\ref{eq:dotSfull}) follows  along   (\ref{eq:overallDyn}), i.e.,
\begin{align*}
	\dot{V} &\leq -(\mathbf{x}-\overline{\mathbf{x}})^T\mathcal{R}^T\left [\mathbf{F}(\mathbf{x}) - \mathbf{F}(\overline{\mathbf{x}})\right ] \\
	&\quad - (\mathbf{x}-\overline{\mathbf{x}})^T\mathbf{L}(\mathbf{x} - \overline{\mathbf{x}})
\end{align*}
Using  $\mathbf{x}=\mathbf{x}^\perp + \mathbf{x}^{||}$, $\overline{\mathbf{x}}=\mathbf{1}_N\otimes x^*$, $\mathbf{L} \mathbf{x}^{||} =0$, $\dot{V}$ can be written as
%LATER
%\begin{align*}
%\dot{V} &\leq -(\mathbf{x}^\perp)^T\mathcal{R}^T\left [\mathbf{F}(\mathbf{x}) - \mathbf{F}(\overline{\mathbf{x}})\right ] \\
%	& - (\mathbf{x}^{||} - \mathbf{1}_N\otimes x^* )^T \mathcal{R}^T \left [\mathbf{F}(\mathbf{x}) - \mathbf{F}(\overline{\mathbf{x}})\right ]\\
%	& - (\mathbf{x}^\perp)^T\mathbf{L}\mathbf{x}^\perp
%\end{align*}
%or equivalently,
\begin{align*}
\dot{V} &\leq -(\mathbf{x}^\perp)^T\mathcal{R}^T\left [\mathbf{F}(\mathbf{x}) - \mathbf{F}(\mathbf{x}^{||})\right ] \\
	& -(\mathbf{x}^\perp)^T\mathcal{R}^T\left [\mathbf{F}(\mathbf{x}^{||}) - \mathbf{F}(\overline{\mathbf{x}})\right ] \\
	& - (\mathbf{x}^{||} - \mathbf{1}_N\otimes x^* )^T \mathcal{R}^T \left [\mathbf{F}(\mathbf{x}) - \mathbf{F}(\mathbf{x}^{||})\right ]\\
	& - (\mathbf{x}^{||} - \mathbf{1}_N\otimes x^* )^T \mathcal{R}^T \left [\mathbf{F}(\mathbf{x}^{||}) - \mathbf{F}(\overline{\mathbf{x}})\right ]\\
	& - (\mathbf{x}^\perp)^T\mathbf{L}\mathbf{x}^\perp
\end{align*}
Using  $ (\mathbf{x}^{\perp})^T \mathbf{L} \mathbf{x}^{\perp} \geq \lambda_2(L) \| \mathbf{x}^{\perp} \|^2$ and $\mathbf{F}(\mathbf{x}^{||}) =F(x)$, %= \mathbf{F}(\mathbf{1}_N\otimes x)
  $\mathbf{F}(\overline{\mathbf{x}}) =F(x^*)$ %= \mathbf{F}(\mathbf{1}_N\otimes x^*)
    yields 
\begin{align*}
\dot{V} &\leq \|\mathbf{x}^\perp \| \| \mathbf{F}(\mathbf{x}) - \mathbf{F}(\mathbf{x}^{||}) \| \\
	& -(\mathbf{x}^\perp)^T\mathcal{R}^T\left [F(x)- F(x^*)\right ] \\
	& - (\mathbf{x}^{||} - \mathbf{1}_N\otimes x^* )^T \mathcal{R}^T \left [\mathbf{F}(\mathbf{x}) - \mathbf{F}(\mathbf{x}^{||})\right ]\\
	& - (\mathbf{x}^{||} - \mathbf{1}_N\otimes x^* )^T \mathcal{R}^T \left [F(x) -F(x^*)\right ]\\
	& -  \lambda_2(L) \| \mathbf{x}^\perp \|^2
\end{align*}
Under Assumption   \ref{asmp:strongPseudo}(ii), $ \| \mathbf{F}(\mathbf{x}) - \mathbf{F}(\mathbf{x}^{||}) \|  \leq    \theta \| \mathbf{x}^{\perp} \| $, so that, after simplifying $\mathcal{R}\mathbf{x}^{||} = x $, $ \mathcal{R}( \mathbf{1}_N\otimes x^* )= x^*$, %\theta \| \mathbf{x} - \mathbf{x}^{||} \| =
%LATER IF NEEDED
%\begin{align*} 
%\dot{V} & \leq  -(\lambda_2(L) -\theta) \|\mathbf{x}^{\perp}\|^2 - \mathbf{x}^{\perp} \mathcal{R}^T  \left [F(x)- F(x^*)\right ]\\
%& - (\mathbf{x}^{||} - \mathbf{1}_N\otimes x^* )^T \mathcal{R}^T [ \mathbf{F}(\mathbf{x})- \mathbf{F}(\mathbf{x}^{||}) ] \\
%& - (\mathbf{x}^{||} - \mathbf{1}_N\otimes x^* )^T \mathcal{R}^T [ F(x) - F(x^*) ]
%\end{align*} 
%Using $F(x^*)=0$, $\mathcal{R}\mathbf{x}^{||} = x $, $ \mathcal{R}( \mathbf{1}_N\otimes x^* )= x^*$ this simplifies to
%\begin{align*} 
%\dot{V} & \leq  -(\lambda_2(L) -\theta) \|\mathbf{x}^{\perp}\|^2 - \mathbf{x}^{\perp} \mathcal{R}^T F(x) \\
%& - (x -  x^* )^T  [ \mathbf{F}(\mathbf{x})- \mathbf{F}(\mathbf{x}^{||}) ] \\
%& - (x- x^* )^T  [ F(x) - F(x^*) ]
%\end{align*} 
\begin{align*} 
\dot{V} & \leq  -(\lambda_2(L) -\theta) \|\mathbf{x}^{\perp}\|^2 - \mathbf{x}^{\perp} \mathcal{R}^T  \left [F(x)- F(x^*)\right ]\\
& -(x -  x^* )^T [ \mathbf{F}(\mathbf{x})- \mathbf{F}(\mathbf{x}^{||}) ] \\
& - (x -  x^* )^T  [ F(x) - F(x^*) ]
\end{align*}

%LATER IF NEEDED
%$ P_E \mathbf{x}^{\perp} = \mathbf{x}^{\perp}$, $P_C \mathbf{x}^{||} = \mathbf{x}^{||}$,  as well as   , $ \mathbf{x} - \mathbf{x}^{||} = \mathbf{x}^{\perp}$.

%LATER if needed
%
%Using Assumption   \ref{asmp:strongPseudo}(ii)   it can be shown that % along   (\ref{eq:overallDyn}), 
%\begin{align*} 
%\dot{V} & \leq  -(\lambda_2(L) -\theta) \|\mathbf{x}^{\perp}\|^2 - (\mathbf{x}^{\perp})^T P_E \mathcal{R}^T F(x) \\
%& - (\mathbf{x}^{||} - \mathbf{1}_N\otimes x^* )^T P_C \mathcal{R}^T [ \mathbf{F}(\mathbf{x})- \mathbf{F}(\mathbf{x}^{||}) ] \\
%& - (\mathbf{x}^{||} - \mathbf{1}_N\otimes x^* )^T P_C \mathcal{R}^T F(x) %\mathbf{F}(\mathbf{x}^{||})%&  - (\mathbf{x}^{||} - \mathbf{1}_N\otimes x^* )^T P_C \mathbf{L} \mathbf{x}^{\perp}
%\end{align*} %where  the last term is $0$ due $\mathbf{x}^{||}=\mathbf{1}_N\otimes x$. 
%Using $F(x^*)=0$, $\mathcal{R}\mathbf{x}^{||} = x $, $ \mathcal{R}( \mathbf{1}_N\otimes x^* )= x^*$,  $ P_E \mathbf{x}^{\perp} = \mathbf{x}^{\perp}$, $P_C \mathbf{x}^{||} = \mathbf{x}^{||}$,  as well as   , $ \mathbf{x} - \mathbf{x}^{||} = \mathbf{x}^{\perp}$.

Using again Assumption \ref{asmp:strongPseudo}(ii) in the $3^{rd}$   and $2^{nd}$ terms and Assumption \ref{asmp:PseudoGradMono}(ii) in the $4^{th}$ one,  it can be shown that 
\begin{align}\label{eq:dotVstrong_asy}
\dot{V} & \leq  -(\lambda_2(L) -\theta) \|\mathbf{x}^{\perp}\|^2 +2 \theta \| \mathbf{x}^{\perp} \| \| x - x^*\| \\
&% \theta \| \| x - x^*\|  \| \mathbf{x}^{\perp}  \| 
- \mu  \| x - x^*\| ^2 \notag
\end{align} 
or $
%\begin{align*} 
\dot{V}  \leq  - [ \| x - x^*\|  \,\, \, \, \,  \|\mathbf{x}^{\perp}\| ] \, \Theta  \, [  \| x - x^*\| \,\, \, \, \, \|\mathbf{x}^{\perp}\| ]^T
$, %\end{align*}
where $\Theta = \left [ \begin{array}{cc}  \mu & -\theta \\ -\theta & \lambda_2(L) - \theta \end{array} \right ]$. Under the conditions in the statement, $\Theta$ is positive definite. Hence $\dot{V}\leq 0$ and $\dot{V}=0$ only if $\mathbf{x}^{\perp}=0$ and  $x=x^*$, hence $\mathbf{x} = \mathbf{1}_N\otimes x^*$. The  conclusion follows  by LaSalle's invariance principle. 

Exponential convergence follows   using $ \| x - x^*\| = \frac{1}{\sqrt{N}} \| \mathbf{x}^{||} - \overline{\mathbf{x}} \|$, under the stricter condition on $\lambda_2(L)$.  Indeed, 
$$
%\begin{align*} 
\dot{V}  \leq  -  \left [ \begin{array}{cc} \|  \mathbf{x}^{||} - \overline{\mathbf{x}}\|  &  \|\mathbf{x}^{\perp}\|  \end{array} \right ]   \, \Theta_N  \, \left [ \begin{array}{c} 
 \|  \mathbf{x}^{||} - \overline{\mathbf{x}} \| \\  \|\mathbf{x}^{\perp}\|  \end{array} \right ]  
$$where $\Theta_N = \left [ \begin{array}{cc}  \frac{1}{N} \mu & -\theta \\ -\theta & \lambda_2(L) - \theta \end{array} \right ] $ is positive definite if $\lambda_2(L)  >\frac{N \theta^2}{\mu} + \theta$. This implies that  $\dot{V}(\mathbf{x}(t)) \leq - \eta V(\mathbf{x}(t))$,  for some $\eta >0$, so that  $\mathbf{x}(t)$ converges exponentially to  $\overline{\mathbf{x}}$.
\end{proof}
\begin{remark}
Since $\theta, \mu$ are related to the coupling in the players' cost functions  and $\lambda_2(L)$ to the connectivity between players, Theorem \ref{thm:LipPseudoComm} highlights the tradeoff between properties of the game and those of the communication graph $G_c$. Key is the fact that  the Laplacian contribution can be used to balance the other terms in  $\dot{V}$. Alternatively $\mathbf{L}$ on feedback path in Figure \ref{fig:expFeedback}  has excess passivity which compensates the lack of passivity in the $\mathbf{F}$ terms, or $\widetilde{\Sigma}$ on the forward path . 

\end{remark}
\begin{remark}
We note that we can relax the monotonicity assumption to hold just at the NE $x^*$, recovering a strict-diagonal assumption used in \cite{FKB12}. However, since $x^*$ is unknown, such an assumption cannot be checked a-priori except for special cases such as quadratic games, (see Section \ref{Simulation}).  Local results follow if assumptions for $F(\cdot)$ hold only locally around $x^*$, and for $\mathbf{F}(\cdot)$ only locally around $\mathbf{x}^*=\mathbf{1}_N \otimes x^*$ . %LATER consensus subspace $\mathbf{1}_N \otimes x$ or 
We note that the class of quadratic games satisfies Assumption \ref{asmp:PseudoGradMono}(ii) globally.
\end{remark}

An alternative representation of the dynamics $\widetilde{\mathcal{P}}$,   (\ref{eq:overallDyn}), reveals interesting connections to distributed optimization and passivity based control,  \cite{WE11}, \cite{A07}.  To do that we use two matrices to write a compact representation for   $\widetilde{\mathcal{P}}$,   (\ref{eq:overallDyn}). Let   $\mathcal{S} = diag(\mathcal{S}_1, ... , \mathcal{S}_N) \in \reals^{(Nn - n)\times Nn}$, where    $\mathcal{S}_i$ in (\ref{eq:actualSMatrix}). Then  $\mathcal{S} $ and $\mathcal{R}$ (\ref{eq:actualStratREMatrix}) satisfy $\mathcal{S}\mathcal{R}^T = \mathbf{0}$ and 
\begin{align}
\mathcal{R}^T\mathcal{R} + \mathcal{S}^T\mathcal{S} = I, \,\,\mathcal{R}\mathcal{R}^T = I, \,\,	\mathcal{R}\mathcal{S}^T = \mathbf{0},
\,\,	\mathcal{S}\mathcal{S}^T = I 	\label{eq:IdRSMatrix} 
\end{align}
Using $ \mathcal{R}$ and $ \mathcal{S}$,  the stacked actions are  $ [(\mathbf{x}^1_1)^T,\dotsc,(\mathbf{x}^N_N)^T]^T = \mathcal{R}\mathbf{x}$, while the stacked estimates $ [(\mathbf{x}^1_{-1})^T,\dotsc,(\mathbf{x}^N_{-N})^T]^T = \mathcal{S}\mathbf{x}$. Let  $x=\mathcal{R}\mathbf{x}$ and $z =\mathcal{S}\mathbf{x}$, and using properties of $ \mathcal{R}$, $ \mathcal{S}$, (\ref{eq:IdRSMatrix}), yields $\mathbf{x} = \mathcal{R}^Tx + \mathcal{S}^Tz$. Thus  the equivalent representation of  $\widetilde{\mathcal{P}}$ (\ref{eq:overallDyn}) is:
\begin{align} \label{eq:actEstSplit1}
	\widetilde{\mathcal{P}} : \begin{cases}
	\dot{x} &= - \mathbf{F}(\mathcal{R}^Tx + \mathcal{S}^Tz) - \mathcal{R} \mathbf{L}[\mathcal{R}^Tx + \mathcal{S}^Tz] \\
	\dot{z} &= - \mathcal{S}\mathbf{L}[\mathcal{R}^Tx + \mathcal{S}^Tz] 
	\end{cases}
\end{align}
which separates the actions and the estimates stacked components of the dynamics. Using $\mathbf{L} = L \otimes I_n$ and $L=QQ^T$, with $Q$ the incidence matrix, yields the interconnected block-diagram in Figure \ref{fig:expFeedback1}. 

We note that, unlike  distributed optimization (e.g. \cite{WE11}) and passivity based control (e.g. \cite{A07}) where the dynamics are decoupled, in Figure \ref{fig:expFeedback}  the dynamics  on the top path are coupled, due to the inherent coupling in players' cost functions in  game  $\mathcal{G}(\mathcal{I},J_i, \Omega_i)$. As another observation, recall that given a matrix $Q$, pre-multiplication by $Q$ and post-multiplication by $Q^T$ preserves passivity of a system. Figure  \ref{fig:expFeedback} shows that possible generalized dynamics can be designed by substituting the identity block on the feedback path by some other passive dynamics.  Based on Figure \ref{fig:expFeedback}, one such dynamic generalization can be obtained by substituting the static feedback through $\mathbf{L}$, with an integrator or proportional-integrator term through $ \mathbf{L}$, which preserves passivity, as in \cite{WE11}. Thus the passivity interpretation of the game NE seeking dynamics design allows a systematic derivation of new dynamics/algorithms.

Note that  if the dynamics of the estimates were \emph{modified} such that the system approached quickly the consensus subspace then convergence to the  NE could be shown  via a time-scale decomposition approach. This is explored in the next section.

\begin{figure}[h!]
  \centering
\begin{tikzpicture}[auto, node distance=0.7cm]
	\node [block] (R) {$-\mathcal{R}$};
    \node [block, right = of R] (dynamics) {$	\dot{x} = -\mathbf{F}(x,z) + u$}; 
	\node [block, right = of dynamics] (RT) {$ \mathcal{R^T}$};

    \node [block, below = of dynamics] (int) {$\begin{array}{ccc} \frac{1}{s} & & \mathbf{0} \\ & \ddots & \\ \mathbf{0} & & \frac{1}{s} \end{array}$};
	\node [block, left = of int] (S) {$-\mathcal{S}$};
	\node [input, draw=none, left = of S] (start) {};
	\node [block, right = of int] (ST) {$ \mathcal{S^T}$};
	\node [sum, right = of ST] (sum) {+};
	
	\node [block, below = of int] (id) {$Id$};
	\node [block, left = of id] (Q) {$ Q \otimes I_n$};
	\node [block, right = of id] (QT) {$ Q^T \otimes I_n$};
	    
    \draw [->] (dynamics) -- node [name=x] {$x$}(RT);
    \draw [->] (RT) -| node {} (sum);
    \draw [->] (ST) -- node {} (sum);
    \draw [->] (int) -- node {$z$} (ST);
    \draw [->] (S) -- node {$\dot{z}$} (int);
    \draw [->] (QT) -- node {} (id);
    \draw [->] (id) -- node {} (Q);
    \draw [-] (Q) -| node {} (start);
    \draw [->] (start) |- node {}(R);
    \draw [->] (start) |- node {}(S);
    \draw [->] (R) -- node {$u$} (dynamics);
    \draw [->] (sum) |- node[pos=0.8] {$\mathbf{x}$} (QT);
    \draw [->] (int) -- node {$z$} (dynamics);
\end{tikzpicture}
	\caption{Block Diagram of Actions and Estimates dynamics in (\ref{eq:overallDyn})}
	\label{fig:expFeedback1}
\end{figure}
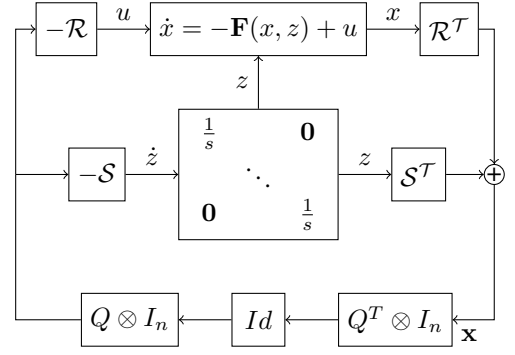

\vspace{-0.2cm}

\subsection{Two-Timescale Singular Perturbation Analysis}\label{sec:twoTime}

In this section we relax the connectivity bound on $L$ in Theorem \ref{thm:LipPseudoComm}, based on a time-scale separation argument.  The idea is to modify  the dynamics of the estimates in $\widetilde{\mathcal{P}}$ (\ref{eq:overallDyn}) or (\ref{eq:actEstSplit1}) such that the system approaches quickly the consensus subspace. Under Assumption \ref{asmp:PseudoGradMono}(ii) and  \ref{asmp:strongPseudo}(ii), we show  convergence to the  NE over \emph{a sufficiently connected}  $G_c$  based on a time-scale decomposition approach. %We note that we can relax the strict monotonicity assumption to hold just at the NE $x^*$, recovering a strict-diagonal assumption used in \cite{FKB12}. However, since $x^*$ is unknown, such an assumption cannot be checked a-priori except for special cases such as quadratic games, (see Section \ref{Simulation}).

Recall   the equivalent representation of  $\widetilde{\mathcal{P}}$ in (\ref{eq:actEstSplit1}) and modify the estimate component of the dynamics such that is much faster than the action component, 
\begin{align} \label{eq:actEstSplit}
	\widetilde{\mathcal{P}}_\epsilon : \begin{cases}
	\dot{x} &= - \mathbf{F}(\mathcal{R}^Tx + \mathcal{S}^Tz) - \mathcal{R} \mathbf{L}[\mathcal{R}^Tx + \mathcal{S}^Tz] \\
	\epsilon\dot{z} &= -\mathcal{S}\mathbf{L}[\mathcal{R}^Tx + \mathcal{S}^Tz]
	\end{cases}
\end{align}
where  $\epsilon >0$. 
%OUT
%\begin{align}\label{standardSingP}
%	\widetilde{\mathcal{P}}_\epsilon : \begin{cases}
%	\dot{x} &= f(x,z) \\ 
%	\epsilon \dot{z} &=g(x, z)
%	\end{cases}
%\end{align}with $f$, $g$  defined from the right hand-side of (\ref{eq:actEstSplit}). 
Thus player $i$'s dynamics is as follows:
\begin{align} \label{eq:agentActionEstimateEPS}
	\widetilde{\mathcal{P}}_{i,\epsilon} : \begin{cases}
	\begin{bmatrix}
	\dot{x}_i \\
	\dot{\mathbf{x}}^i_{-i}
	\end{bmatrix} =
	\begin{bmatrix}
	-\nabla_i J_i(x_i,\mathbf{x}^i_{-i}) - \mathcal{R}_i\sum_{j\in\mathcal{N}_i} ( \mathbf{x}^i - \mathbf{x}^j) \\
	- \frac{1}{\epsilon}\mathcal{S}_i(\sum_{j\in\mathcal{N}_i} \mathbf{x}^i - \mathbf{x}^j)\\
	\end{bmatrix}
	% \\
	%y_i = \mathbf{x}^i 
	\end{cases}
\end{align}
with the $\frac{1}{\epsilon}$ high gain on the estimate component. $\widetilde{\mathcal{P}}_\epsilon$ (\ref{eq:actEstSplit})  is in the standard form of a singularly perturbed system, where the estimate dynamics and the action dynamics are the \emph{fast} and the \emph{slow} components, respectively. 
\begin{thm} \label{thm:STRGMonPseudoComm}
Consider a game $\mathcal{G}(\mathcal{I},J_i, \Omega_i)$ over a communication graph $G_c$ under Assumptions  \ref{asmp:CxnGraph}, \ref{asmp:Jsmooth}(i), \ref{asmp:PseudoGradMono}(ii) and  \ref{asmp:strongPseudo}(ii).  Let each player's dynamics   $\widetilde{\mathcal{P}}_{i,\epsilon}$ be as in (\ref{eq:agentActionEstimateEPS}), or overall   $\widetilde{\mathcal{P}}_\epsilon$   (\ref{eq:actEstSplit}), $\epsilon >0$.  Then, there exists $\epsilon^* >0$, such that for all $0<\epsilon <\epsilon^*$,  $(x^*, \mathcal{S} (\mathbf{1}_N\otimes x^*))$ is exponentially stable. Alternatively,  $(x^*, \mathcal{S} (\mathbf{1}_N\otimes x^*))$ is asymptotically stable, for all $0<\epsilon<1$ such that
%DONE **** \textcolor{red}{TBD  $ \lambda_2(L) > \frac {\theta^2}{\mu (1/\epsilon -1)}$. **** end TBD} 
$$ \lambda_2(\mathbf{L})  > \epsilon   \sqrt{N}  ( \frac{\theta}{\mu}  +1) (\theta + 2 d^*)).  $$
\end{thm}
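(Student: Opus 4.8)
The plan is to analyze the singularly perturbed system $\widetilde{\mathcal{P}}_\epsilon$ (\ref{eq:actEstSplit}) via standard two-timescale / singular perturbation techniques (e.g. Khalil, \cite{K02}). First I would confirm, via Lemma \ref{lemma:eqIsConsensus} applied to the equivalent representation, that the equilibrium of (\ref{eq:actEstSplit}) is exactly $(x^*, \mathcal{S}(\mathbf{1}_N\otimes x^*))$, so that the target point is well-defined. For the exponential claim, the route is to put the system in standard singular-perturbation form, identify the \emph{boundary-layer (fast) system} governing $z$, and the \emph{reduced (slow) system} governing $x$. The fast dynamics are $\epsilon\dot z = -\mathcal{S}\mathbf{L}[\mathcal{R}^T x + \mathcal{S}^T z]$, which is linear in $z$ with system matrix $-\mathcal{S}\mathbf{L}\mathcal{S}^T$; the key preliminary step is to show this matrix is Hurwitz, i.e. that $\mathcal{S}\mathbf{L}\mathcal{S}^T$ is positive definite. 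This should follow from connectivity of $G_c$ together with the structural identities (\ref{eq:IdRSMatrix}): on the relevant subspace $\mathbf{L}$ restricted off the consensus directions is positive definite, and $\mathcal{S}$ has full row rank with $\mathcal{S}\mathcal{R}^T=\mathbf{0}$.

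Given that $-\mathcal{S}\mathbf{L}\mathcal{S}^T$ is Hurwitz, I would solve the quasi-steady-state relation $0=-\mathcal{S}\mathbf{L}[\mathcal{R}^Tx+\mathcal{S}^T z]$ for the manifold $z=h(x)=-(\mathcal{S}\mathbf{L}\mathcal{S}^T)^{-1}\mathcal{S}\mathbf{L}\mathcal{R}^T x$, substitute into the slow equation to obtain the reduced dynamics $\dot x = -\mathbf{F}(\mathcal{R}^Tx+\mathcal{S}^T h(x)) - \mathcal{R}\mathbf{L}[\mathcal{R}^Tx+\mathcal{S}^T h(x)]$, and verify that the manifold collapses the estimate errors onto consensus so that the reduced dynamics inherit the convergence properties established in Lemma \ref{lemma:perfectInfo}/Theorem \ref{thm:strongPseudoComm}; exponential stability of the reduced system then comes from the strong monotonicity of $F$ in Assumption \ref{asmp:PseudoGradMono}(ii). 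With both the boundary-layer system exponentially stable (uniformly in $x$) and the reduced system exponentially stable, Tikhonov's theorem / the composite-Lyapunov singular perturbation theorem yields existence of $\epsilon^*>0$ such that the full system is exponentially stable for all $0<\epsilon<\epsilon^*$.

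For the alternative asymptotic claim with the explicit bound $\lambda_2(\mathbf{L})>\epsilon\sqrt{N}(\tfrac{\theta}{\mu}+1)(\theta+2d^*)$, I expect the argument to use a composite Lyapunov function of the form $V=\tfrac12\|x-x^*\|^2 + \tfrac{c}{2}\|\mathbf{x}^\perp\|^2$ (or an analogous weighted sum of the slow storage function and a fast Lyapunov function for $z$), mirroring the $2\times 2$ matrix $\Theta$ construction from the proof of Theorem \ref{thm:LipPseudoComm}. Differentiating along (\ref{eq:actEstSplit}), one bounds the cross-terms using Lipschitz continuity (Assumption \ref{asmp:strongPseudo}(ii)) and strong monotonicity (Assumption \ref{asmp:PseudoGradMono}(ii)), the factor $\tfrac{1}{\sqrt N}$ relating $\|x-x^*\|$ to $\|\mathbf{x}^{||}-\overline{\mathbf{x}}\|$, and the fast dissipation $(\mathbf{x}^\perp)^T\mathbf{L}\mathbf{x}^\perp\geq \lambda_2(\mathbf{L})\|\mathbf{x}^\perp\|^2$ scaled by $1/\epsilon$; the stated inequality is precisely the condition making the resulting $2\times 2$ quadratic form negative definite, with $d^*$ presumably a bound on the relevant gain linking $x$ and $z$ (e.g. $\|\mathcal{R}\mathbf{L}\mathcal{S}^T\|$ or a max-degree quantity).

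\textbf{The main obstacle} I anticipate is verifying that $\mathcal{S}\mathbf{L}\mathcal{S}^T$ is positive definite (Hurwitz boundary layer) and cleanly computing the slow manifold $h(x)$ in terms of the structural matrices, since $\mathcal{S}$ projects out each agent's own action and $\mathbf{L}=L\otimes I_n$ interacts nontrivially with the block structure encoded by $\mathcal{R},\mathcal{S}$; getting the reduced dynamics into a form where Assumption \ref{asmp:PseudoGradMono}(ii) directly yields exponential decay, rather than merely an asymptotic bound contaminated by off-manifold estimate error, is the delicate point. The explicit-bound variant carries the secondary difficulty of tracking all the Lipschitz/monotonicity constants through the composite Lyapunov estimate so that the cross-coupling is dominated exactly at the threshold $\lambda_2(\mathbf{L})>\epsilon\sqrt{N}(\tfrac{\theta}{\mu}+1)(\theta+2d^*)$, which requires care but is conceptually routine once the matrix inequality is set up.
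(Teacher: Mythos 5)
Your proposal is correct and takes essentially the same route as the paper: the paper likewise proves $\mathcal{S}\mathbf{L}\mathcal{S}^T$ is positive definite via a null-space argument (using (\ref{eq:LIeqZero}) and the zero blocks $\mathcal{S}^T$ inserts), identifies the quasi-steady state as the consensus point $z=\mathcal{S}(\mathbf{1}_N\otimes x)$ (your $h(x)$, computed directly rather than through $-(\mathcal{S}\mathbf{L}\mathcal{S}^T)^{-1}\mathcal{S}\mathbf{L}\mathcal{R}^Tx$), shifts coordinates by $v=z-\mathcal{S}(\mathbf{1}_N\otimes x)$ so the reduced system is exactly $\dot{x}=-F(x)$, exponentially stable by Lemma \ref{lemma:perfectInfo}, and then invokes Khalil's Theorem 11.4 for the existence of $\epsilon^*$ and Theorem 11.3 for the explicit bound. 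For the latter the paper uses the two Lyapunov functions $V=\frac{1}{2}\|x-x^*\|^2$ and $W=\frac{1}{2}\|v\|^2$ (working with $v$ instead of your $\mathbf{x}^\perp$, which is only a cosmetic difference) with interconnection constants $\alpha_1=\mu$, $\alpha_2=\lambda_2(L)/N$, $\beta_1=\theta+\lambda_N(L)$, $\beta_2=\sqrt{N}\theta$, $\gamma=\sqrt{N}(\theta+\lambda_N(L))$, and bounds $\lambda_N(L)\leq 2d^*$ with $d^*$ the maximum degree, exactly as you conjectured.
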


%TBD Compare to Fred's CDC2014 gossip, or ADMM IFAC (Wei's)
% intuition of terms

\begin{proof} We analyze  (\ref{eq:actEstSplit})  by examining the reduced and the boundary-layer systems. First we find the roots of $\mathcal{S}\mathbf{L}[\mathcal{R}^Tx + \mathcal{S}^Tz]=0$, or $\mathcal{S} \mathbf{L}\mathbf{x}=0$. Note that, by (\ref{eq:IdRSMatrix}),	$\mathbf{x} \in Null(\mathcal{S} \mathbf{L})$ if and only if $ \mathbf{L}\mathbf{x}\in Null(\mathcal{S})$, which is equivalent to $\mathbf{L}\mathbf{x}\in Range(\mathcal{R^T})$. Thus $\mathbf{x} \in Null(\mathcal{S}  \mathbf{L})$ if and only if there exists $q \in \reals^n$ such that $\mathbf{L}\mathbf{x} = \mathcal{R^T}q$. Then for such $ q \in \reals^n$ and for all $w \in \reals^n$, $(\mathbf{1}^T_N \otimes w^T)\mathbf{L}\mathbf{x} = (\mathbf{1}^T_N \otimes w^T)\mathcal{R}^T q$. Using $(\mathbf{1}^T_N \otimes w^T)\mathbf{L} \mathbf{x} =0$ by (\ref{eq:LIeqZero}) and $\mathcal{R}$ in (\ref{eq:actualStratREMatrix}), this means $0=w^T q$, for all $w \in \reals^n$. Therefore, $q=0$ and $\mathbf{L}\mathbf{x} =0$. By (\ref{eq:nullExpL}), $\mathbf{x}=\mathbf{1}_N\otimes x$, $x\in \Omega$. Hence roots of  $\mathcal{S}\mathbf{L}[\mathcal{R}^Tx + \mathcal{S}^Tz]=0$ are when $\mathbf{x} =(\mathbf{1}_N\otimes x)$, i.e.,  $z= \mathcal{S} (\mathbf{1}_N\otimes x)$.
%or  is on the consensus subspace. Using, $z =\mathcal{S}\mathbf{x}$, this yields $z= \mathcal{S} (\mathbf{1}_N\otimes x)$.
%Thus $h(x) =\mathcal{S} (\mathbf{1}_N\otimes x)$. 

We use a change of coordinates $v=z-\mathcal{S} (\mathbf{1}_N\otimes x)$,  to shift the equilibrium of the boundary-layer system to the origin. First, we use  $z=v+\mathcal{S} (\mathbf{1}_N\otimes x)$ and $x = \mathcal{R}(\mathbf{1}_N\otimes x)$ to rewrite the term $\mathcal{R}^Tx + \mathcal{S}^Tz$ that appears in (\ref{eq:actEstSplit}) as follows,
\begin{align*}%\label{RxSz}
& \mathcal{R}^Tx + \mathcal{S}^Tz	= \mathcal{R}^T\mathcal{R}(\mathbf{1}_N\otimes x) + \mathcal{S}^T(v+\mathcal{S} (\mathbf{1}_N\otimes x )) \\
		&= (\mathcal{R}^T\mathcal{R} + \mathcal{S}^T\mathcal{S})(\mathbf{1}_N\otimes x) + \mathcal{S}^T\, v = \mathbf{1}_N\otimes x + \mathcal{S}^T\, v 
\end{align*}
where  (\ref{eq:IdRSMatrix}) was used. Using this and the  change of variables $v=z-\mathcal{S} (\mathbf{1}_N\otimes x)$ into (\ref{eq:actEstSplit}) with $ \mathbf{L}(\mathbf{1}_N\otimes x)=0$ yields,
\begin{align} \label{eq:actEstSplitxv2}
	\widetilde{\mathcal{P}}_\epsilon : \begin{cases}
	\dot{x} &= - \mathbf{F}(\mathbf{1}_N\otimes x + \mathcal{S}^T\, v) - \mathcal{R} \mathbf{L} \mathcal{S}^T\, v  \\
	\epsilon \dot{v} &= -\mathcal{S}\mathbf{L} \mathcal{S}^T\, v +  \epsilon \mathcal{S} (\mathbf{1}_N\otimes  \mathcal{R} \mathbf{L} \mathcal{S}^T\, v) \\
	& +  \epsilon \mathcal{S} (\mathbf{1}_N\otimes \mathbf{F}(\mathbf{1}_N\otimes x + \mathcal{S}^T\, v) )
	\end{cases}
\end{align}
Note that $v=0$ is the quasi-steady state of $\epsilon \dot{v}$ and substituting this in $\dot{x}$ gives  the reduced system as
\begin{align} \label{eq:actEstSplitxv2RED}
	\dot{x} = - \mathbf{F}(\mathbf{1}_N\otimes x) = -F(x)
\end{align}
which is exactly the gradient dynamics and has equilibrium $x^*$at the NE. By Lemma \ref{lemma:perfectInfo}, under Assumption \ref{asmp:PseudoGradMono}(ii) the gradient dynamics,  (\ref{eq:actEstSplitxv2RED}), is exponentially stable. 

 The boundary-layer system on the $\tau=t/\epsilon$ timescale is
\begin{align} \label{eq:actEstSplitxv2BDRY}
	\frac{d v}{d\tau}  = -\mathcal{S}\mathbf{L} \mathcal{S}^T\, v 
\end{align}
It can be shown that matrix $\mathcal{S}\mathbf{L}\mathcal{S}^T$ is   positive definite, so that (\ref{eq:actEstSplitxv2BDRY}) is exponentially stable. 
%LATER as showed next. 
To see this note that $\mathcal{S}\mathbf{L}\mathcal{S}^Tv= 0$ only if $\mathcal{S}^Tv \in Null(\mathcal{S}\mathbf{L})$.  Recall that  $Null(\mathcal{S}\mathbf{L}) = Null(\mathbf{L}) =(\mathbf{1}_N \otimes w)$, $w\in {\Omega}$. Note that to be in $Null(\mathbf{L})$, $y = [(y^1)^T\dotsc (y^N)^T]^T = \mathcal{S}^Tv$ has to have $y^i = y^j$, $\forall i,j\in\mathcal{I}$,  but $y$ has a $0$ in component $y^i_i\ \forall i\in\mathcal{I}$, due to the definition of $\mathcal{S}$. Therefore $y^i \neq y^j$ unless $y^j_i = y^i_i = 0$, for all $j$. Therefore $y=\mathcal{S}^Tv$ has to be equal to $0$ to be in the $Null(\mathbf{L})$.  Since  $Null(\mathcal{S}^T) = \{0\}$ this implies $v=0$, hence $Null(\mathcal{S}\mathbf{L}\mathcal{S}^T) = 0$ and  $\mathcal{S}\mathbf{L}\mathcal{S}^T$ is positive definite. %TBD OUT Note that $v^T\mathcal{S}\mathbf{L}\mathcal{S}^T v \geq \lambda_2(\mathbf{L}) \|v\|^2$, $\lambda_2(\mathbf{L})>0$. 
 %Since  $\mathcal{S}\mathbf{L}\mathcal{S}^T$ is positive definite, it follows that the boundary-layer system is exponentially stable. 
  By Theorem 11.4 in \cite{K02} it follows that there exists $\epsilon^* >0$, such that for all $\epsilon <\epsilon^*$, $(x^*, 0)$ is exponentially stable for (\ref{eq:actEstSplitxv2}), or $(x^*, \mathcal{S} (\mathbf{1}_N\otimes x^*))$ is exponentially stable for (\ref{eq:actEstSplit}). 
  
 Alternatively,  Theorem 11.3 in \cite{K02} can be applied to (\ref{eq:actEstSplitxv2}) to show asymptotic stability. 
 %LATER ADD DETAILS for Theorem 11.3
The two Lyapunov functions are $V(x) = \frac{1}{2}\| x- x^*\|^2$ and $W(v) = \frac{1}{2}\| v\|^2$, and along the reduced and the boundary layer-systems, (\ref{eq:actEstSplitxv2RED}),(\ref{eq:actEstSplitxv2BDRY}),  the following hold $$
-(x-x^*)^T  \mathbf{F}(\mathbf{1}_N\otimes x) \leq - \mu  \|x-x^*\|^2
$$
$$%TBD CHECK /N
-v^T\mathcal{S}\mathbf{L}\mathcal{S}^T v \leq - \lambda_2(L)/N \|v\|^2
$$
so that (11.39) and (11.40) in  \cite{K02} hold for $\alpha_1 = \mu$, $\psi_1(x) = \|x-x^*\|$, and $\alpha_2 = \lambda_2(L)/N$, $\psi_2(v) = \|v\|$.%TBD CHECK /N

Note also that  the following holds 
\begin{align*}
& (x-x^*)^T \left [ - \mathbf{F}(\mathbf{1}_N\otimes x + \mathcal{S}^T\, v) - \mathcal{R} \mathbf{L} \mathcal{S}^T\, v + \mathbf{F}(\mathbf{1}_N\otimes x)\right ] \\
& \leq (\theta +\lambda_N(L)) \|x-x^*\| \|v\|
\end{align*}
so that (11.43) in \cite{K02}  holds for $\beta_1 = \theta +\lambda_N(L) $.

Similarly,
\begin{align*}
& v^T \mathcal{S} (\mathbf{1}_N\otimes  \mathcal{R} \mathbf{L} \mathcal{S}^T\, v) 
 + v^T \mathcal{S} (\mathbf{1}_N\otimes \mathbf{F}(\mathbf{1}_N\otimes x + \mathcal{S}^T\, v) )\\
&\leq \sqrt{N} \theta \|x-x^*\| \|v\| + \sqrt{N} (\theta + \lambda_N(L) )  \| v\|^2
\end{align*}
and (11.44),  \cite{K02}  holds for $\beta_2  = \sqrt{N} \theta $, $\gamma = \sqrt{N} (\theta + \lambda_N(L)) $.

Then using Theorem 11.3, $\epsilon^* = \frac{\alpha_1\alpha_2}{\alpha_1 \gamma + \beta_1 \beta_2}$, is given as
$$%TBD CHECK /N
\epsilon^* = \frac{  \lambda_2(L) \mu }{ N \sqrt{N}  ( \theta +\mu ) (\theta + \lambda_N(L))  }
$$
and using $\lambda_N(L) \leq 2 d^* =2 \max_{i\in \mathcal{I}} | \mathcal{N}_i |$, 
$$%TBD CHECK /N
\epsilon^* \geq  \frac{  \lambda_2(L) \mu }{ N \sqrt{N}  ( \theta +\mu ) (\theta + 2 d^*))  }
$$
Then, by Theorem 11.3 in \cite{K02}, for any $0<\epsilon <1$ such that
$$
 \lambda_2(L)  > \epsilon N  \sqrt{N}  ( \frac{\theta}{\mu}  +1) (\theta + 2 d^*))  
$$
$(x^*, 0)$  is  asymptotic stable  for (\ref{eq:actEstSplitxv2}), hence $(x^*, \mathcal{S} (\mathbf{1}_N\otimes x^*))$ is  asymptotic stable for (\ref{eq:actEstSplit}).

 \end{proof}

% TBD $\epsilon$ is a global parameter that needs to be known; can have $\epsilon_i$ but need use weighted Laplacian
\begin{remark}
In  $\widetilde{\mathcal{P}}_{i,\epsilon}$ (\ref{eq:agentActionEstimateEPS}) the  estimate dynamics is made faster with the  gain $1/\epsilon$. 
It can be shown  that for sufficiently high $1/\epsilon$, $\frac{1}{\epsilon} > N \sqrt{N}  (1 + 2 \frac{d^*}{\theta})  $, the bound on $\lambda_2(L)$  in  Theorem \ref{thm:STRGMonPseudoComm}  is lower than the bound on $\lambda_2(L)$ in Theorem \ref{thm:LipPseudoComm}. 
%DONE %TBD  \textcolor{red}{$$\epsilon   \sqrt{N}  ( \frac{\theta}{\mu}  +1) (\theta + 2 d^*))  <    \theta ( \frac{\theta}{\mu} +1 )   $$$$\epsilon   \sqrt{N}  (\theta + 2 d^*))  <    \theta   $$$$\frac{1}{\epsilon} >  \sqrt{N}  (1 + 2 \frac{d^*}{\theta})  $$}
 %*** TBD 
Alternatively, we can consider a  gain parameter $\frac{1}{\epsilon}>0$ on the estimates in (\ref{eq:overallDyn}) to improve the lower bound to $\lambda_2(L)  > \epsilon(\frac{\theta^2}{\mu} + \theta)$, as shown in the next section.  Thus a higher $1/\epsilon $ can relax the connectivity bound on $L$, but $\epsilon$ is a global parameter.
 This highlights another aspect of the tradeoff between game properties (coupling), communication graph (consensus) properties and information. 
\end{remark}

\section{Projected NE Dynamics for Compact Action Sets}\label{projection}

In this section we treat the case of compact $\Omega_i$ action sets, under Assumption \ref{asmp:Jsmooth}(ii), using projected dynamics. We highlight the major steps of the approach and their differences compared to the unconstrained action set case.

In a game of perfect information, for  compact $\Omega_i$ action set, under Assumption \ref{asmp:Jsmooth}(ii) each player $i \in \mathcal{I}$ runs the projected gradient-based dynamics,  given as \cite{Flam02}, \cite{SA05},
\begin{align} \label{eq:gradientProj}
	\mathcal{P}_i : & \quad	\dot{x}_i = \Pi_{\Omega_i}(x_i,-\nabla_i J_i(x_i, x_{-i})), \, \, x_i(0) \in \Omega_i
\end{align}
The overall  system of all agents' projected dynamics in stacked notation is given by
\begin{align} \label{eq:gradientProj_all}
\mathcal{P} : \,\,\, \dot{x}(t) = \Pi_{\Omega}\left (x(t),-F(x(t) )\right ), \quad x(0) \in \Omega
\end{align}
%TBD LATER where $\Pi_{\Omega}(x,v) = [\Pi_{\Omega_1}(x_1, v_1) \dots \Pi_{\Omega_N}(x_N, v_N)] ^T$, 
or, equivalently, 
$$\mathcal{P} : \,\,\, \dot{x}(t) =P_{T_\Omega(x(t))}\left [-F(x(t) \right ], \quad x(0) \in \Omega$$
where equivalence follows by using Lemma 1 (Moreau's decomposition theorem, \cite{Lemarechal}), or directly by Proposition 1 and Corollary 1 in \cite{Brogliato2006}. Furthermore,  this  is equivalent to the differential inclusion  \cite{AubinCellina} %(by Corollary 2  in \cite{Brogliato2006}),  
\begin{align} \label{eq:gradientProj_all_DI}
 -F(x(t)) - \Pi_{\Omega}\left (x(t),-F(x(t) )\right ) \in N_{\Omega}(x(t)).
\end{align}%the right-hand side of (\ref{eq:gradientProj_all})
In all the above  the projection operator is discontinuous on the boundary of $\Omega$. We use the standard definition of a solution of a projected dynamical system (PDS), (Definition 2.5 in \cite{NZ96}). Thus we call $x:[0,+\infty) \rightarrow \Omega$ a solution of (\ref{eq:gradientProj_all}) if $x(\cdot)$ is an absolutely continuous function $t \mapsto x(t)$ and $\dot{x}(t) = \Pi_{\Omega}\left (x(t),-F(x(t) )\right )$ holds almost everywhere (a.e.)  with respect to $ t$, i.e., except on a set of measure zero. %Lebesgue 

The existence of a unique  solution of  (\ref{eq:gradientProj_all}) is guaranteed for any $x(0) \in \Omega$, under Lipschitz continuity of $F$ on $\Omega$, cf. Theorem 2.5 in \cite{NZ96}. Note that any solution must necessarily lie in $\Omega$ for almost every $t$. 
Alternatively, existence holds under continuity and \emph{(hypo) monotonicity} of $F$, i.e., for some $\mu \leq  0$, 
$$
(x-x')^T(F(x) - F(x')) \geq  \mu \| x-x'\|^2,  \quad \forall x,x' \in \Omega
$$
(see Assumption 2.1 in  \cite{NZ96},  and also Theorem 1 in  \cite{Brogliato2006} for extension to a non-autonomous systems). %which gives for $B= -\mu $. 
This is similar to the QUAD relaxation in   \cite{DDR11}. It means that $F(x) - \mu x $ is monotone, where $\mu \leq  0$. Note that  $F(x) + \eta x $ is  strongly monotone for any $\eta >  - \mu$, with $\eta +\mu >0$ monotonicity constant. When $\mu > 0$ in fact we can take $\eta=0$ and recover $\mu$-strong monotonicity of $F$. Thus under Assumption \ref{asmp:Jsmooth}(ii), \ref{asmp:PseudoGradMono}(i), for any $x(0) \in \Omega$, there exists a unique solution of   (\ref{eq:gradientProj_all}) and moreover, a.e., $\dot{x}(t) \in T_{\Omega}(x(t))$ and  $x(t) \in \Omega$.

%DONE ***** \textcolor{red}{TBD} Need show equilibrium point of  (\ref{eq:gradientProj_all}) is $x^*$ NE (discuss on $\partial \Omega$ or $int \Omega$), (as in Cortes CDC 2016, or Proposition  2 (see also Proposition 1 and Corollary 1) in \cite{Brogliato2006} or using Moreau's decomposition theorem in \cite{Lemarechal} (Lemma 1) or Lemma 2.1 in \cite{NZ96}.  **** \textcolor{red}{end TBD} 

Equilibrium points of (\ref{eq:gradientProj_all}) coincide  with Nash equilibria, which are solutions of the VI($F$, $\Omega$), by Theorem 2.4 in \cite{NZ96}. To see this, let $\overline{x}$ be an equilibrium point of (\ref{eq:gradientProj_all}) such that $\{ \overline{x} \in \Omega \, | \, \mathbf{0}_{n} = \Pi_{\Omega}\left (\overline{x},-F(\overline{x})\right )\}$. 
By Lemma 2.1 in \cite{NZ96} and (\ref{charactPi}), if $\overline{x} \in \text{int} \Omega$, then $\mathbf{0}_{n} = \Pi_{\Omega}\left (\overline{x},-F(\overline{x})\right )=-F(\overline{x}) $, while if  $\overline{x} \in \partial \Omega$, then %by (\ref{charactPi}),
\begin{align} \label{eq:NE_proj_with_n}
\mathbf{0}_{n} = \Pi_{\Omega}\left (\overline{x},-F(\overline{x})\right ) = - F(\overline{x}) - \beta \, n
\end{align}
for some $\beta>0$ and $n \in n(\overline{x}) \subset N_\Omega(\overline{x})$. Equivalently, by (\ref{eq:gradientProj_all_DI}),  $-F(\overline{x}) \in N_\Omega(\overline{x})$  
% Thus $-F(\overline{x}) = \beta \, n$, for  $\beta>0$ and $n \in N_\Omega(\overline{x})$ 
  and  using the definition of $N_\Omega(\overline{x})$, it follows that 
$$
-F(\overline{x})^T \, (x - \overline{x}) \leq 0, \quad \forall x\in \Omega
$$
Comparing to (\ref{eq:ViNash}), or (\ref{eq:ViNashNormCone}), it follows that $\overline{x} = x^*$. Thus the equilibrium points of (\ref{eq:gradientProj_all})  	$\{x^* \in \Omega \, | \, \mathbf{0}_{n}= \Pi_\Omega (x^*, - F(x^*)) \}$ coincide  with Nash equilibria $x^*$. 
 %the solutions of the VI($F, \Omega$), hence

%LATER if NEEDED
%By Theorem 2.4 in \cite{NZ96}, when $\Omega$ is a convex polyhedron, the equilibrium points of (\ref{eq:gradientProj_all}) coincide with the solutions of the VI($F, \Omega$), hence with a Nash equilibrium.

\begin{lemma} \label{lemma:perfectInfoProj}
	Consider a game $\mathcal{G}(\mathcal{I},J_i, \Omega_i)$ in the perfect information case, under Assumptions \ref{asmp:Jsmooth}(ii) and \ref{asmp:PseudoGradMono}(i).  Then, for any $x_i(0) \in \Omega_i$, the solution  of  (\ref{eq:gradientProj}), $i \in \mathcal{I}$, or (\ref{eq:gradientProj_all}) converges asymptotically to the NE of the game $x^*$. Under Assumption \ref{asmp:PseudoGradMono}(ii) convergence is exponential. 
%	$\{x^* \in \Omega \, | \, \mathbf{0}_{n}= \Pi_\Omega (x^*, - F(x^*)) \}$. 
\end{lemma}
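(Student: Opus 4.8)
The plan is to mirror the Lyapunov argument of Lemma \ref{lemma:perfectInfo}, but now accounting for the projection. Since the equilibrium points of (\ref{eq:gradientProj_all}) have already been shown to coincide with the NE $x^*$, it suffices to establish convergence. I would take the same quadratic candidate $V(x) = \frac{1}{2}\|x - x^*\|^2$, which is positive definite with respect to $x^*$; trajectories are confined to the compact set $\Omega$ for a.e.\ $t$, so boundedness (needed for the invariance argument) is automatic. Because the projection operator is discontinuous on $\partial\Omega$, (\ref{eq:gradientProj_all}) is a projected dynamical system whose solution $x(\cdot)$ is only absolutely continuous, so I would compute $\dot V$ almost everywhere along solutions and invoke a nonsmooth/PDS version of LaSalle's invariance principle, as available through the solution theory of \cite{NZ96}.

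The key computation is to differentiate $V$ using the characterization (\ref{charactPi}) of the projected vector. Along solutions, a.e.,
\begin{align*}
\dot V = (x - x^*)^T \Pi_{\Omega}(x, -F(x)) = (x - x^*)^T\bigl[-F(x) - \beta(x)\, n^*(x)\bigr],
\end{align*}
where $n^*(x) \in n(x) \subset N_{\Omega}(x)$ and $\beta(x) \geq 0$ (with $\beta(x) = 0$ whenever $x \in \text{int}\,\Omega$). The normal-cone contribution, which is the only new ingredient relative to Lemma \ref{lemma:perfectInfo}, is handled by the defining inequality of $N_{\Omega}(x)$: since $x^* \in \Omega$, we have $n^*(x)^T(x^* - x) \leq 0$, hence $-\beta(x)\,(x - x^*)^T n^*(x) \leq 0$. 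This term can therefore only aid the Lyapunov decrease and may be discarded, giving $\dot V \leq -(x - x^*)^T F(x)$.

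For the remaining term I would split $-(x-x^*)^T F(x) = -(x-x^*)^T\bigl(F(x)-F(x^*)\bigr) - (x-x^*)^T F(x^*)$. The second piece is $\leq 0$ because $x(t) \in \Omega$ for a.e.\ $t$ and $x^*$ solves the VI (\ref{eq:ViNash}), so that $(x - x^*)^T F(x^*) \geq 0$; this is exactly where the projected case departs from Lemma \ref{lemma:perfectInfo}, in which $F(\overline{x})=0$ held outright. Under Assumption \ref{asmp:PseudoGradMono}(i) the first piece is strictly negative for $x \neq x^*$, so $\dot V < 0$ off the equilibrium and $\dot V = 0$ only at $x = x^*$; LaSalle's invariance principle then yields asymptotic convergence to $x^*$. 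Under Assumption \ref{asmp:PseudoGradMono}(ii), strong monotonicity upgrades the first piece to $\leq -\mu\|x - x^*\|^2 = -2\mu V$, so that $V(x(t)) \leq V(x(0))\,e^{-2\mu t}$ and convergence is exponential.

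The main obstacle I anticipate is not the algebra but the nonsmooth analysis: justifying that $t \mapsto V(x(t))$ is differentiable a.e.\ with derivative $(x-x^*)^T\dot x$, and that an invariance-type conclusion is legitimate for a discontinuous right-hand side. I would lean on the PDS solution and invariance theory of \cite{NZ96} (and \cite{Brogliato2006}) already cited in the text. The interior-versus-boundary split in (\ref{charactPi}), together with $\beta(x)\ge 0$ and the VI inequality for $x^*$, is precisely what guarantees the boundary terms never increase $V$, so that the perfect-information argument carries over once those terms are controlled.
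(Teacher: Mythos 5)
Your proposal is correct and takes essentially the same route as the paper: the same storage function $V(x)=\frac{1}{2}\|x-x^*\|^2$, the same use of the normal cone to discard the projection contribution (the paper works from the inclusion $-F(x(t))-\Pi_{\Omega}(x(t),-F(x(t)))\in N_{\Omega}(x(t))$ tested against $x'=x^*$, while you use the pointwise form $\Pi_{\Omega}(x,-F(x))=-F(x)-\beta(x)\,n^*(x)$ from (\ref{charactPi}); both rest on Lemma 2.1 of \cite{NZ96} and yield $\dot V\leq -(x-x^*)^TF(x)$), the same splitting via the variational inequality (\ref{eq:ViNash}) to remove $(x-x^*)^TF(x^*)$, and the same exponential estimate under Assumption \ref{asmp:PseudoGradMono}(ii). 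The only cosmetic difference is the closing step: where you appeal to a nonsmooth/PDS version of LaSalle, the paper avoids any invariance principle altogether --- since $\dot V$ is strictly negative off $x^*$ (not merely semidefinite), it observes that $V$ is nonincreasing, hence converges to some $\underline{V}\geq 0$, and shows $\underline{V}=0$ via the contradiction argument of Theorem 3.6 in \cite{NZ96}, which is the cleaner route given the discontinuous right-hand side you yourself flag.
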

\begin{proof} 
The proof follows from Theorem 3.6 and 3.7 in  \cite{NZ96}. Consider  any $x(0) \in \Omega$  and $V(t,x) = \frac{1}{2} \| x(t) - x^* \|^2$, where $x^*$ is the Nash equilibrium of the game, and $x(t)$ is the solution of   (\ref{eq:gradientProj_all}). Then the time derivative of $V$ along solutions of (\ref{eq:gradientProj_all}) is $\dot{V} = (x(t)-x^*)^T \Pi_{\Omega}(x(t),-F(x(t)))$. Since $T_\Omega (x(t)) = [N_\Omega(x(t))]^o$, by Moreau's decomposition theorem (Lemma \ref{lemma:Moreau}), at any point $x(t)\in \Omega$ the pseudo-gradient  $-F(x(t))$ can be decomposed  as in  (\ref{v_Moreau}) into  normal and tangent components, in  $N_{\Omega}(x(t))$ and  $T_{\Omega}(x(t))$. Since  $\Pi_{\Omega}(x(t),-F(x(t))) = P_{T_\Omega(x(t))}(-F(x(t)))$ is  in the tangent cone $T_{\Omega}(x(t))$, it follows as in (\ref{eq:gradientProj_all_DI}) that 
	\begin{align}\label{eq:FinNcone}
		-F(x(t))-\Pi_{\Omega}(x(t),-F(x(t))) \in N_{\Omega}(x(t)) 
	\end{align}
	From the definition of the normal cone $N_{\Omega}(x(t))$ this means,
	\begin{align*}
		(x'-x(t))^T (-F(x(t))-\Pi_{\Omega}(x(t),-F(x(t)))) &\leq 0 %\quad \forall x'\in {\Omega}
	\end{align*}	
%NO NEED  
%\begin{align}	 (x(t)-x')^T \Pi_{\Omega}(x(t),-F(x(t))))  \leq  -(x(t)-x')^T & F(x(t)) \label{eq:incProj} 	\end{align}
for all $x'\in {\Omega}$. Thus  it follows that for $x'=x^*$ and $\forall x(t) \in \Omega$, 
$$	(x(t)-x^*)^T	\Pi_{\Omega}(x(t),-F(x(t))))  \leq -(x(t)-x^*)^T  F(x(t)) 
$$
	From (\ref{eq:ViNash}),  at the Nash equilibrium $F(x^*)^T(x(t)-x^*) \geq 0$, $\forall x(t) \in \Omega$. Therefore adding this to the right-hand side of the above and using $\dot{V}(t)$ yields that 
%	\begin{align*}
%		(x(t)-x^*)^T	\Pi_{\Omega}(x(t),-F(x(t)))  &\leq -(x(t)-x^*)^T (F(x(t)) -   F(x^*)) % -(x-x^*)^T F(x) +(x-x^*)^T  F(x^*)\notag
%	\end{align*}
along solutions of (\ref{eq:gradientProj_all}), for all $t \geq 0$, $ \dot{V} = (x(t)-x^*)^T \Pi_{\Omega}(x(t),-F(x(t))) \leq -(x(t)-x^*)^T (F(x(t)) -   F(x^*)) < 0$,   when $x(t) \neq x^*$,  
where the strict inequality follows from Assumption \ref{asmp:PseudoGradMono}(i). Hence $V(t)$ is monotonically decreasing and non-negative, and thus there exists $\lim_{t\rightarrow \infty} V(t) = \underline{V} $. As in Theorem 3.6 in  \cite{NZ96}, a contradiction argument can be used to show that $\underline{V} = 0$, hence  for any $x(0) \in \Omega$, $\| x(t) - x^* \| \rightarrow 0$ as $t \rightarrow \infty$.

%DONE *** \textcolor{red}{TBD} Since time-varying system so to finish this, need  as in Nagurney either directly $D(t) = \frac{1}{2} \| x(t) - x^* \|^2$ and $\dot{D(t)} <0$ etc , or as in Khalil (as in Chapter 4.5). or use Barbalat's Lemma (Chapter 8) instead of LaSalle.

Under Assumption \ref{asmp:PseudoGradMono}(ii), for any $x(0) \in \Omega$, along solutions of (\ref{eq:gradientProj_all}), for all $t \geq 0$, $\dot{V} \leq - \mu  \|x(t) - x^* \|^2 = -  \mu V(t)$, $\mu >0$, $\forall x$ and exponential convergence follows immediately.
\end{proof}

In the partial or networked information case, over graph $G_c$, we modify each player's 	$\widetilde{\Sigma}_i$, in (\ref{eq:baseAgentExpFiner}) using projected dynamics for the action components to $\Omega_i$, as in 
\begin{align}
	\label{eq:baseAgentExpFinerProj}
	\widetilde{\Sigma}_i &: \begin{cases}
		\begin{bmatrix}
			\dot{x}_i \\
			\dot{\mathbf{x}}^i_{-i}
		\end{bmatrix}
		= \begin{bmatrix}
			\Pi_{\Omega_i} \left ( x_i, -\nabla_i J_i(x_i, \mathbf{x}^i_{-i})+B^i_i\mathbf{u}_i(t) \right )  \\
			 B^i_{-i} \mathbf{u}_i
		\end{bmatrix}  \\
		\mathbf{y}_i = (B^i)^T\mathbf{x}^i
	\end{cases}
\end{align}
%TBD OUT FOR NOW or
%\begin{align*}
%	\widetilde{\Sigma}_i &: \begin{cases}
%		\begin{bmatrix}
%			\dot{x}_i \\
%			\dot{\mathbf{x}}^i_{-i}
%		\end{bmatrix}
%		= \begin{bmatrix}
%			P_{T_{\Omega_i}(x_i)} \left [ -\nabla_i J_i(x_i, \mathbf{x}^i_{-i})+B^i_i\mathbf{u}_i(t) \right ]  \\
%			 B^i_{-i} \mathbf{u}_i
%		\end{bmatrix}  \\
%		\mathbf{y}_i = (B^i)^T\mathbf{x}^i
%	\end{cases}
%\end{align*}
where   $\mathbf{u}_i(t) \in \reals^{n}$ is a piecewise continuous function, to be designed based on the relative output feedback from its neighbours,  such that $ \mathbf{x}^i = \mathbf{x}^j$, for all $i,j$, and converge towards the NE $x^*$. Write $\widetilde{\Sigma}_i$ (\ref{eq:baseAgentExpFinerProj}) in a more compact form
\begin{align}
 \label{eq:baseAgentExpProj} 
	\widetilde{\Sigma}_i &: \begin{cases}
		\dot{\mathbf{x}}^i = \mathcal{R}_i^T\Pi_{\Omega_i}\left (x_i,-\nabla_i J_i(\mathbf{x}^i)+\mathcal{R}_iB^i\mathbf{u}_i\right ) + \mathcal{S}_i^T\mathcal{S}_iB^i\mathbf{u}_i \\
		\mathbf{y}_i = (B^i)^T\mathbf{x}^i
	\end{cases}
\end{align}
where $\mathcal{R}_i$, $\mathcal{S}_i $, are defined as in (\ref{eq:actualStratREMatrix}), (\ref{eq:actualSMatrix}). The overall dynamics  for all players becomes in stacked form, $\widetilde{\Sigma}$,
 \begin{align}\label{eq:baseAgentProj}
	\widetilde{\Sigma} : \begin{cases}
		\dot{\mathbf{x}} = \mathcal{R}^T\Pi_{\Omega}\left (\mathcal{R}\mathbf{x}(t),-\mathbf{F}(\mathbf{x}(t))+\mathcal{R}B\mathbf{u}(t) \right ) + \mathcal{S}^T\mathcal{S}B\mathbf{u}(t) \\
		\mathbf{y} = B^T\mathbf{x}(t)
	\end{cases}
\end{align}
where $x= \mathcal{R}\mathbf{x}$, $\mathcal{R} = diag(\mathcal{R}_1, \dots, \mathcal{R}_N)$, $\mathcal{S} = diag(\mathcal{S}_1, \dots, \mathcal{S}_N)$, satisfying the properties in (\ref{eq:IdRSMatrix}), and $\mathbf{u}(t) \in \reals^{Nn}$ is piecewise continuous. This is similar to (\ref{eq:baseAgent}), except that  the dynamics for the action components is projected to $\Omega$. For any $\mathcal{R}\mathbf{x}(0)\in {\Omega}$,  existence of a unique solution of (\ref{eq:baseAgentProj}) is guaranteed under Assumption \ref{asmp:strongPseudo}(i) or (ii) by Theorem 1 in  \cite{Brogliato2006}. Extending the incrementally passivity and EIP concept to projected dynamical systems leads to the following result. 

\begin{lemma}  \label{lemma:ForwardEIPProj}
	Under Assumption \ref{asmp:strongPseudo}(i), the overall system $\widetilde{\Sigma}$, (\ref{eq:baseAgentProj}), is incrementally passive, hence EIP.
\end{lemma}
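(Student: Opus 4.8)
The plan is to mirror the proof of Lemma \ref{lemma:ForwardEIP}, using the same incremental storage function $V(\mathbf{x},\mathbf{x}') = \frac{1}{2}\|\mathbf{x}-\mathbf{x}'\|^2$, but now accounting for the projection acting on the action components. Fix two piecewise-continuous inputs $\mathbf{u}$, $\mathbf{u}'$ and let $\mathbf{x}$, $\mathbf{x}'$ be the corresponding solutions of (\ref{eq:baseAgentProj}), which remain in $\Omega^N$ for almost every $t$ by Theorem 1 in \cite{Brogliato2006}. First I would split the state into its action and estimate parts, $x = \mathcal{R}\mathbf{x}$, $z = \mathcal{S}\mathbf{x}$: using $\mathcal{R}^T\mathcal{R} + \mathcal{S}^T\mathcal{S} = I$ and the orthogonality relations in (\ref{eq:IdRSMatrix}), the storage function decomposes as $V = \frac{1}{2}\|x - x'\|^2 + \frac{1}{2}\|z - z'\|^2$, while pre-multiplying (\ref{eq:baseAgentProj}) by $\mathcal{R}$ and by $\mathcal{S}$ yields the decoupled components $\dot{x} = \Pi_\Omega(x, -\mathbf{F}(\mathbf{x}) + \mathcal{R}B\mathbf{u})$ and $\dot{z} = \mathcal{S}B\mathbf{u}$. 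The estimate component is unprojected, so it contributes exactly $(z-z')^T\mathcal{S}B(\mathbf{u}-\mathbf{u}')$ to $\dot{V}$, precisely as in Lemma \ref{lemma:ForwardEIP}.

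The crux is the projected action term. Writing $v = -\mathbf{F}(\mathbf{x}) + \mathcal{R}B\mathbf{u}$, I would invoke Moreau's decomposition (Lemma \ref{lemma:Moreau}, equation (\ref{v_Moreau})) to express $\dot{x} = \Pi_\Omega(x, v) = v - d$, where $d = P_{N_\Omega(x)}(v) \in N_\Omega(x)$ is the normal component; similarly $\dot{x}' = v' - d'$ with $d' = P_{N_\Omega(x')}(v') \in N_\Omega(x')$. Then $(x-x')^T(\dot{x}-\dot{x}') = (x-x')^T(v - v') - (x-x')^T d - (x'-x)^T d'$. Since both $x$ and $x'$ lie in $\Omega$ for a.e. $t$, the defining inequality of the normal cone gives $(x-x')^T d \geq 0$ and $(x'-x)^T d' \geq 0$ — exactly the sign argument used in Lemma \ref{lemma:perfectInfoProj}. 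Hence the projection can only decrease $\dot{V}$, so $(x-x')^T(\dot{x}-\dot{x}') \leq (x-x')^T(v-v')$.

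It then remains to reassemble the pieces. Substituting $v - v' = -(\mathbf{F}(\mathbf{x}) - \mathbf{F}(\mathbf{x}')) + \mathcal{R}B(\mathbf{u}-\mathbf{u}')$ and adding the estimate contribution gives
\begin{align*}
\dot{V} &\leq -(x-x')^T(\mathbf{F}(\mathbf{x}) - \mathbf{F}(\mathbf{x}')) \\
&\quad + (x-x')^T\mathcal{R}B(\mathbf{u}-\mathbf{u}') + (z-z')^T\mathcal{S}B(\mathbf{u}-\mathbf{u}').
\end{align*}
Monotonicity of $\mathbf{F}$ (Assumption \ref{asmp:strongPseudo}(i)) eliminates the first term, and restoring $x-x' = \mathcal{R}(\mathbf{x}-\mathbf{x}')$, $z-z' = \mathcal{S}(\mathbf{x}-\mathbf{x}')$ collapses the input terms through $\mathcal{R}^T\mathcal{R} + \mathcal{S}^T\mathcal{S} = I$ into $(\mathbf{x}-\mathbf{x}')^TB(\mathbf{u}-\mathbf{u}') = (\mathbf{y}-\mathbf{y}')^T(\mathbf{u}-\mathbf{u}')$, which is the inequality required by Definition \ref{def:IncrP}.

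The main obstacle I anticipate is not the algebra but the careful handling of the projection: because $\Pi_\Omega$ is discontinuous on $\partial\Omega$, the identity $\dot{V} = (\mathbf{x}-\mathbf{x}')^T(\dot{\mathbf{x}}-\dot{\mathbf{x}}')$ holds only almost everywhere and must be read in the projected-dynamical-system sense of \cite{NZ96}, \cite{Brogliato2006}. The essential new ingredient relative to the unprojected Lemma \ref{lemma:ForwardEIP} is the \emph{symmetric} normal-cone sign argument, which requires that \emph{both} compared trajectories remain feasible in $\Omega$; this is what guarantees that the projection dissipates rather than injects energy and thus preserves incremental passivity. Once this is in hand, the reduction to the EIP case follows by specializing to constant $\mathbf{u}'$, $\mathbf{x}'$, $\mathbf{y}'$, exactly as in the discussion following Definition \ref{def:IncrP}.
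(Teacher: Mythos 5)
Your proposal is correct and takes essentially the same route as the paper's proof: the same incremental storage function $V = \frac{1}{2}\|\mathbf{x}-\mathbf{x}'\|^2$, the same use of the $\mathcal{R}$, $\mathcal{S}$ structure and the identity $\mathcal{R}^T\mathcal{R}+\mathcal{S}^T\mathcal{S}=I$ to collapse the input terms, and the same key step of applying the normal-cone inequality symmetrically to both feasible trajectories --- your Moreau-decomposition phrasing $\dot{x}=v-d$ with $d\in N_\Omega(x)$ is exactly the content of the paper's pair of variational inequalities (\ref{eq:eipProj1})--(\ref{eq:eipProj2}) added together. Your caveat about reading $\dot{V}$ almost everywhere in the projected-dynamical-system sense also matches the solution concept the paper adopts from \cite{NZ96}, \cite{Brogliato2006}.
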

\begin{proof}
Consider two inputs  $\mathbf{u}(t)$, $\mathbf{u}'(t)$ and  let $\mathbf{x}(t)$, $\mathbf{x}'(t)$, $\mathbf{y}(t)$, $\mathbf{y}'(t)$ be the state trajectories and outputs of $\widetilde{\Sigma}$ (\ref{eq:baseAgentProj}). Let the storage function be $V(t,\mathbf{x},\mathbf{x}') = \frac{1}{2}\|\mathbf{x}(t)-\mathbf{x}'(t)\|^2$. Then, along solutions of (\ref{eq:baseAgentProj}),
\begin{align}\label{eq:dotVagain}
\dot{V} & =(\mathbf{x}(t)-\mathbf{x}'(t))^T\mathcal{R}^T[\Pi_{\Omega}(\mathcal{R}\mathbf{x}(t),-\mathbf{F}(\mathbf{x}(t))+\mathcal{R}B\mathbf{u}(t)) \notag\\
	&- \Pi_{\Omega}(\mathcal{R}\mathbf{x'}(t),-\mathbf{F}(\mathbf{x'}(t))+\mathcal{R}B\mathbf{u}'(t))]  \\
	& + (\mathbf{x}(t)-\mathbf{x}'(t))^T\mathcal{S}^T\mathcal{S}B(\mathbf{u}(t) - \mathbf{u}'(t)) \notag
\end{align}
Notice that using  (\ref{eq:IdRSMatrix}) the following holds for (\ref{eq:baseAgentProj}),
$$%	\dot{\mathbf{x}} - \mathcal{S}^T\mathcal{S}B\mathbf{u} = \mathcal{R}^T\Pi_{\Omega}(\mathcal{R}\mathbf{x},-\mathbf{F}(\mathbf{x})+\mathcal{R}B\mathbf{u})\notag \\
	\mathcal{R}\dot{\mathbf{x}}(t) = \Pi_{\Omega}\left (\mathcal{R}\mathbf{x}(t),-\mathbf{F}(\mathbf{x}(t))+\mathcal{R}B\mathbf{u}(t) \right ) \in  T_ {\Omega}( \mathcal{R}\mathbf{x}(t))
$$
Hence as in (\ref{eq:FinNcone}), for any  $\mathcal{R}\mathbf{x}(t)\in {\Omega}$,
\begin{align*}% \label{eq:FinNconeu}
%	-\mathbf{F}(\mathbf{x}) + \mathcal{R}B\mathbf{u} -\mathcal{R}\dot{\mathbf{x}} \in N_{\Omega}(\mathcal{R}\mathbf{x}) \notag \\
	-\mathbf{F}(\mathbf{x})(t) + \mathcal{R}B\mathbf{u}(t)  -\Pi_{\Omega}\left (\mathcal{R}\mathbf{x}(t), -\mathbf{F}(\mathbf{x}(t))+\mathcal{R}B\mathbf{u}(t) \right ) \\
	 \in N_{\Omega}(\mathcal{R}\mathbf{x}(t)) 
\end{align*}%LATERFor any $\mathcal{R}\mathbf{x}(t)\in {\Omega}$, 
and using the definition of normal cone $N_{\Omega}(\mathcal{R}\mathbf{x}(t))$,%  it follows that for all $\mathcal{R}\mathbf{x}'(t)\in {\Omega}$,
%LATER 
%$$ (\mathcal{R}\mathbf{x}'-\mathcal{R}\mathbf{x})^T 	\left (-\mathbf{F}(\mathbf{x}) + \mathcal{R}B\mathbf{u}  -\Pi_{\Omega} \left (\mathcal{R}\mathbf{x},-\mathbf{F}(\mathbf{x})+\mathcal{R}B\mathbf{u} \right ) \right ) \leq 0 $$
\begin{align}
	(\mathcal{R}\mathbf{x}(t)-\mathcal{R}\mathbf{x'}(t))^T \Pi_{\Omega} \left (\mathcal{R}\mathbf{x}(t),-\mathbf{F}(\mathbf{x}(t))+\mathcal{R}B\mathbf{u}(t) \right ) \leq 
	\notag \\
	(\mathcal{R}\mathbf{x}(t)-\mathcal{R}\mathbf{x'}(t))^T (-\mathbf{F}(\mathbf{x}(t)) + \mathcal{R}B\mathbf{u}(t)) \label{eq:eipProj1}
\end{align}
for all $\mathcal{R}\mathbf{x}'(t)\in {\Omega}$. Since  $\mathcal{R}\mathbf{x}(t)\in {\Omega}$ and $\mathcal{R}\mathbf{x}'(t)\in {\Omega}$ are both arbitrary elements in $\Omega$, swapping them leads to 
\begin{align}
	(\mathcal{R}\mathbf{x'}(t)-\mathcal{R}\mathbf{x}(t))^T\Pi_{\Omega}\left (\mathcal{R}\mathbf{x'}(t),-\mathbf{F}(\mathbf{x}'(t))+\mathcal{R}B\mathbf{u}'(t) \right ) \leq \notag \\
	(\mathcal{R}\mathbf{x'}(t)-\mathcal{R}\mathbf{x}(t))^T(-\mathbf{F}(\mathbf{x}'(t)) + \mathcal{R}B\mathbf{u}'(t)) \label{eq:eipProj2}
\end{align}
Adding (\ref{eq:eipProj1}) and (\ref{eq:eipProj2}) results in
\begin{align*}
	(\mathbf{x}(t)- &\mathbf{x'}(t))^T \mathcal{R}^T  [ \Pi_{\Omega}  (\mathcal{R}\mathbf{x}(t),-\mathbf{F}(\mathbf{x}(t))+\mathcal{R}B\mathbf{u}(t)  ) \notag \\
	&- \Pi_{\Omega}  (\mathcal{R}\mathbf{x'}(t),-\mathbf{F}(\mathbf{x}'(t))+\mathcal{R}B\mathbf{u}'(t)  )] \leq \notag \\
	&- (\mathbf{x}(t)-\mathbf{x'}(t))^T \mathcal{R}^T (\mathbf{F}(\mathbf{x}(t))-\mathbf{F}(\mathbf{x}'(t))) \notag \\
	&+(\mathbf{x}(t)-\mathbf{x}'(t))^T\mathcal{R}^T\mathcal{R}B(\mathbf{u}(t)-\mathbf{u}'(t))
\end{align*}
Therefore using this in (\ref{eq:dotVagain}), yields for $\dot{V}$ 
\begin{align*}%\label{eq:EIP_projbis}
	&\dot{V}\leq - (\mathbf{x}(t)-\mathbf{x'}(t))^T \mathcal{R}^T(\mathbf{F}(\mathbf{x}(t))-\mathbf{F}(\mathbf{x}'(t))) \notag \\
	&\, +(\mathbf{x}(t)-\mathbf{x}'(t))^T\mathcal{R}^T\mathcal{R}B(\mathbf{u}(t)-\mathbf{u}'(t)) \\
	& \, + (\mathbf{x}(t)-\mathbf{x}'(t))^T\mathcal{S}^T\mathcal{S}B(\mathbf{u}(t) - \mathbf{u}'(t))
\end{align*}or,  using $\mathcal{R}^T\mathcal{R} + \mathcal{S}^T\mathcal{S} = I$,
\begin{align}\label{eq:EIP_projbis}
	\dot{V}
	&\leq -(\mathbf{x}(t)-\mathbf{x'}(t))^T \mathcal{R}^T(\mathbf{F}(\mathbf{x}(t))-\mathbf{F}(\mathbf{x}'(t))) \notag \\
	& + (\mathbf{x}(t)-\mathbf{x}'(t))^TB(\mathbf{u}(t) - \mathbf{u}'(t))
\end{align}
Finally, using  Assumption \ref{asmp:strongPseudo}(i), it follows that 
 \begin{align*}
	\dot{V} 	&\leq (\mathbf{y}(t)-\mathbf{y}'(t))^T(\mathbf{u}(t) - \mathbf{u}'(t))
\end{align*}
and $\widetilde{\Sigma}$ is incrementally passive, hence EIP.
\end{proof}
Since   $\widetilde{\Sigma}$, (\ref{eq:baseAgentProj}) is incrementally passive by Lemma \ref{lemma:ForwardEIPProj}, and $\mathbf{L}$ is positive semi-definite, as in Section \ref{commGradDyn} we consider a passivity-based control  $\mathbf{u}(t) = -\mathbf{L} \mathbf{x}(t)$. The resulting closed-loop system which represents the new overall system dynamics $\widetilde{\mathcal{P}}$ is given in stacked notation as
 \begin{align} \label{eq:overallDynProj}
	\widetilde{\mathcal{P}} :& \quad % \begin{cases}
 		\dot{\mathbf{x}} = \mathcal{R}^T\Pi_{\Omega} \left (\mathcal{R}\mathbf{x}(t),-\mathbf{F}(\mathbf{x}(t))-\mathcal{R}\mathbf{L}\mathbf{x}(t) \right ) - \mathcal{S}^T\mathcal{S}\mathbf{L}\mathbf{x}(t)
\end{align}
%TBD OUT FOR NOW or
% \begin{align*}
%	\widetilde{\mathcal{P}} : \quad 
% 		\dot{\mathbf{x}}(t)  = & \mathcal{R}^T P_{T_{\Omega}(\mathcal{R}\mathbf{x}(t))} \left [-\mathbf{F}(\mathbf{x}(t))+\mathcal{R}\mathbf{L}\mathbf{x}(t) \right ] \\
%		& - \mathcal{S}^T\mathcal{S}\mathbf{L}\mathbf{x}(t) \notag
%\end{align*}
Alternatively,  using  $x=\mathcal{R}\mathbf{x}$, $z= \mathcal{S}\mathbf{x}$, $\mathcal{R} \mathcal{S}^T=0$, equivalently with actions and estimates separated as in (\ref{eq:actEstSplit1}), 
\begin{align} \label{eq:actEstSplit1Proj}
	\widetilde{\mathcal{P}} : \begin{cases}
	\dot{x} &= \Pi_{\Omega} \left (x, - \mathbf{F}(\mathcal{R}^Tx + \mathcal{S}^Tz) - \mathcal{R} \mathbf{L}[\mathcal{R}^Tx + \mathcal{S}^Tz] \right )\\
	\dot{z} &= - \mathcal{S}\mathbf{L}[\mathcal{R}^Tx + \mathcal{S}^Tz] 
	\end{cases}
\end{align}
%For any $\mathcal{R}\mathbf{x}(0)\in {\Omega}$,  
Existence of a unique solution of (\ref{eq:overallDynProj}) or (\ref{eq:actEstSplit1Proj}) is guaranteed under Assumption \ref{asmp:strongPseudo}(i) or (ii) by Theorem 1 in  \cite{Brogliato2006}. From  (\ref{eq:overallDynProj}) or (\ref{eq:actEstSplit1Proj}) %%TBD OUT for now  \begin{align}
%	\widetilde{\mathcal{P}}_i :& \quad 
%	\dot{\mathbf{x}}^i = \mathcal{R}_i^T\Pi_{\Omega_i} \left (x_i,-\nabla_i J_i(\mathbf{x}^i)- \sum_{j\in\mathcal{N}_i}( \mathbf{x}^i_i - \mathbf{x}^j_i ) \right ) \notag\\
%	&\quad\quad - \sum_{j\in\mathcal{N}_i}( \mathbf{x}^i_{-i} - \mathbf{x}^j_{-i} )\label{eq:expandedIndividualProj}
%\end{align}
after separating the action $\mathbf{x}^i_i=x_i$ and  estimate $\mathbf{x}^i_{-i}$  dynamics, the new projected player dynamics $\widetilde{\mathcal{P}}_i$ are, 
\begin{align} \label{eq:agentActionEstimateProj}
	\widetilde{\mathcal{P}}_i : \begin{cases}
	\dot{x}_i  & = \Pi_{\Omega_i} \left (x_i,-\nabla_i J_i(\mathbf{x}^i) - \mathcal{R}_i\sum_{j\in\mathcal{N}_i} ( \mathbf{x}^i - \mathbf{x}^j) \right ) \\
	\dot{\mathbf{x}}^i_{-i} &=	-\mathcal{S}_i(\sum_{j\in\mathcal{N}_i} \mathbf{x}^i - \mathbf{x}^j)
	\end{cases}
\end{align}
Compared to  (\ref{eq:agentActionEstimate}), $\widetilde{\mathcal{P}}_i$, in (\ref{eq:agentActionEstimateProj}) has projected action components.  The next result shows that the equilibrium of (\ref{eq:overallDynProj}) or (\ref{eq:agentActionEstimateProj})  occurs when the agents are at a consensus and at NE.
\begin{lemma} \label{lemma:eqIsConsensusProj}
	Consider  a game $\mathcal{G}(\mathcal{I},J_i,\Omega_i)$ over a communication graph $G_c$ under Assumptions \ref{asmp:CxnGraph}, \ref{asmp:Jsmooth}(ii) and \ref{asmp:strongPseudo}(i) or (ii).  Let the dynamics for each agent  $\widetilde{\mathcal{P}}_i$ be as in  (\ref{eq:agentActionEstimateProj}), or overall $\widetilde{\mathcal{P}}$,  (\ref{eq:overallDynProj}). At an equilibrium point $\overline{\mathbf{x}}$ the estimate vectors of all players are equal $\bar{\mathbf{x}}^i=\bar{\mathbf{x}}^j$, $\forall i,j\in\mathcal{I}$ and equal to the Nash equilibrium profile $x^*$, hence the action components of all players  coincide with the optimal actions, $\bar{\mathbf{x}}^i_i = x^*_i$, $\forall i\in\mathcal{I}$.
\end{lemma}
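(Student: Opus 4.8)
The plan is to mimic the structure of Lemma \ref{lemma:eqIsConsensus}, but to replace the pre-multiplication by $(\mathbf{1}_N^T \otimes I_n)$ with the algebraic identities (\ref{eq:IdRSMatrix}) in order to decouple the action and estimate components. This change is necessary because the projection operator now sits inside the action dynamics and cannot simply be pulled through a linear functional. Let $\overline{\mathbf{x}}$ be an equilibrium of $\widetilde{\mathcal{P}}$ (\ref{eq:overallDynProj}), so that
\[
\mathbf{0} = \mathcal{R}^T \Pi_{\Omega}\left(\mathcal{R}\overline{\mathbf{x}}, -\mathbf{F}(\overline{\mathbf{x}}) - \mathcal{R}\mathbf{L}\overline{\mathbf{x}}\right) - \mathcal{S}^T\mathcal{S}\mathbf{L}\overline{\mathbf{x}}.
\]
First I would exploit the fact that the two terms lie in the orthogonal ranges $Range(\mathcal{R}^T)$ and $Range(\mathcal{S}^T)$. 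Pre-multiplying by $\mathcal{S}$ and using $\mathcal{S}\mathcal{R}^T = \mathbf{0}$, $\mathcal{S}\mathcal{S}^T = I$ from (\ref{eq:IdRSMatrix}) annihilates the projected term and yields $\mathcal{S}\mathbf{L}\overline{\mathbf{x}} = \mathbf{0}$. Pre-multiplying instead by $\mathcal{R}$ and using $\mathcal{R}\mathcal{R}^T = I$, $\mathcal{R}\mathcal{S}^T = \mathbf{0}$ isolates the action equation as $\Pi_{\Omega}(\mathcal{R}\overline{\mathbf{x}}, -\mathbf{F}(\overline{\mathbf{x}}) - \mathcal{R}\mathbf{L}\overline{\mathbf{x}}) = \mathbf{0}$.

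Next I would establish consensus from $\mathcal{S}\mathbf{L}\overline{\mathbf{x}} = \mathbf{0}$. This is exactly the kernel condition already analyzed in the proof of Theorem \ref{thm:STRGMonPseudoComm}: membership $\mathbf{x} \in Null(\mathcal{S}\mathbf{L})$ forces $\mathbf{L}\mathbf{x} \in Range(\mathcal{R}^T)$, i.e. $\mathbf{L}\mathbf{x} = \mathcal{R}^T q$ for some $q\in\reals^n$, and testing against $(\mathbf{1}_N^T \otimes w^T)$ via (\ref{eq:LIeqZero}) shows $q = \mathbf{0}$, hence $\mathbf{L}\overline{\mathbf{x}} = \mathbf{0}$. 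By (\ref{eq:nullExpL}) this gives $\overline{\mathbf{x}} = \mathbf{1}_N \otimes \bar{x}$ for some $\bar{x} \in \Omega$, so that $\bar{\mathbf{x}}^i = \bar{\mathbf{x}}^j$ for all $i, j \in \mathcal{I}$.

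Finally I would identify $\bar{x}$ with the NE. Substituting $\overline{\mathbf{x}} = \mathbf{1}_N \otimes \bar{x}$ into the isolated action equation, and using $\mathbf{L}\overline{\mathbf{x}} = \mathbf{0}$, the property $\mathbf{F}(\mathbf{1}_N \otimes \bar{x}) = F(\bar{x})$, and $\mathcal{R}(\mathbf{1}_N \otimes \bar{x}) = \bar{x}$, reduces it to $\Pi_{\Omega}(\bar{x}, -F(\bar{x})) = \mathbf{0}$. This is precisely the equilibrium condition of the projected perfect-information dynamics (\ref{eq:gradientProj_all}), which was already shown (via (\ref{eq:NE_proj_with_n}) and $-F(\bar{x}) \in N_\Omega(\bar{x})$, cf. (\ref{eq:ViNashNormCone})) to coincide with the Nash equilibrium. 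Hence $\bar{x} = x^*$, so $\overline{\mathbf{x}} = \mathbf{1}_N \otimes x^*$ and the action components satisfy $\bar{\mathbf{x}}^i_i = x^*_i$.

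The main obstacle I anticipate is the clean decoupling step. Unlike Lemma \ref{lemma:eqIsConsensus}, where a single left-multiplication by $(\mathbf{1}_N^T \otimes I_n)$ immediately removed the Laplacian contribution, here the presence of $\Pi_\Omega$ inside the action block means one must argue that the two summands occupy complementary orthogonal subspaces so that each vanishes independently rather than merely canceling. The identities $\mathcal{R}\mathcal{R}^T = I$, $\mathcal{S}\mathcal{S}^T = I$, $\mathcal{R}\mathcal{S}^T = \mathbf{0}$, $\mathcal{S}\mathcal{R}^T = \mathbf{0}$ in (\ref{eq:IdRSMatrix}) are exactly what makes this separation rigorous, and the only delicate point is to invoke them in the correct order before appealing to the established kernel and equilibrium characterizations.
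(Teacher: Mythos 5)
Your proof is correct and follows essentially the same route as the paper's: pre-multiplication by $\mathcal{R}$ to isolate $\Pi_{\Omega}(\mathcal{R}\overline{\mathbf{x}},-\mathbf{F}(\overline{\mathbf{x}})-\mathcal{R}\mathbf{L}\overline{\mathbf{x}})=\mathbf{0}_n$, consensus $\overline{\mathbf{x}}=\mathbf{1}_N\otimes\bar{x}$ from the Laplacian kernel via (\ref{eq:nullExpL}), and identification $\bar{x}=x^*$ from $\Pi_{\Omega}(\bar{x},-F(\bar{x}))=\mathbf{0}_n$ through $-F(\bar{x})\in N_{\Omega}(\bar{x})$ and (\ref{eq:ViNashNormCone}). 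The only (harmless) difference is cosmetic: you obtain $\mathcal{S}\mathbf{L}\overline{\mathbf{x}}=\mathbf{0}$ by pre-multiplying with $\mathcal{S}$, whereas the paper substitutes the $\mathcal{R}$-equation back to get $\mathcal{S}^T\mathcal{S}\mathbf{L}\overline{\mathbf{x}}=\mathbf{0}$, and you are in fact more explicit than the paper in justifying $Null(\mathcal{S}\mathbf{L})=Null(\mathbf{L})$ by importing the kernel argument from the proof of Theorem \ref{thm:STRGMonPseudoComm}.
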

\begin{proof}
Let $\overline{\mathbf{x}}$ denote an equilibrium of (\ref{eq:overallDynProj}),  
\begin{align}
\label{eq:LeqPointProj}
\mathbf{0}_{Nn} &= \mathcal{R}^T\Pi_{\Omega}\left (\mathcal{R}\overline{\mathbf{x}},-\mathbf{F}(\bar{\mathbf{x}})-\mathcal{R}\mathbf{L}\bar{\mathbf{x}}\right ) - \mathcal{S}^T\mathcal{S}\mathbf{L}\bar{\mathbf{x}}
\end{align}
Pre-multiplying both sides by $\mathcal{R}$ and using (\ref{eq:IdRSMatrix}) simplifies to,
\begin{align}
\mathbf{0}_{n} = \Pi_{\Omega}\left (\mathcal{R}\overline{\mathbf{x}},-\mathbf{F}(\bar{\mathbf{x}})-\mathcal{R}\mathbf{L}\bar{\mathbf{x}} \right ) \label{eq:EqExPsGradProj}
\end{align}
Substituting (\ref{eq:EqExPsGradProj}) into (\ref{eq:LeqPointProj}) results in $\mathbf{0}_{Nn} = - \mathcal{S}^T\mathcal{S}\mathbf{L}\bar{\mathbf{x}}$ which implies $ \bar{\mathbf{x}}\in null(\mathbf{L})$. %As in proof of Thm 3
From this it follows that $\bar{\mathbf{x}}^i = \bar{\mathbf{x}}^j$, $\forall i,j\in\mathcal{I}$ by Assumption \ref{asmp:CxnGraph} and  (\ref{eq:nullExpL}). Therefore   $\bar{\mathbf{x}} = \mathbf{1}_N\otimes \bar{x}$, for some $ \bar{x} \in {\Omega}$. 
Substituting this back into (\ref{eq:EqExPsGradProj}) yields 
$\mathbf{0}_{n} = \Pi_{\Omega}\left (\mathcal{R}(\mathbf{1}_N\otimes \bar{x}),-\mathbf{F}(\mathbf{1}_N\otimes \bar{x}) \right)$
or $\mathbf{0}_{n} = \Pi_{\Omega}(\bar{x},-F(\bar{x}))$ by using (\ref{eq:expPsuedoGrad}). 
Therefore as in (\ref{eq:NE_proj_with_n}) it follows that  $-F(\bar{x})\in N_{\Omega}(\bar{x})$ hence, by (\ref{eq:ViNashNormCone}), $\bar{x} =x^*$ the NE. Thus  $\bar{\mathbf{x}} = \mathbf{1}_N\otimes x^*$ and for all $i,j \in\mathcal{I}$, $\bar{\mathbf{x}}^i = \bar{\mathbf{x}}^j=x^*$ the NE of the game. %x \in X^* later
\end{proof}

The following results show  single-timescale convergence to the NE of the game over \emph{a connected} $G_c$,  under Assumption \ref{asmp:strongPseudo}(i) or Assumption \ref{asmp:strongPseudo}(ii). %TBD only  on a single-timescale

\begin{thm} \label{thm:strongPseudoCommProj}
Consider a game $\mathcal{G}(\mathcal{I},J_i, \Omega_i)$ over a communication graph $G_c$ under Assumptions \ref{asmp:CxnGraph}, \ref{asmp:Jsmooth}(ii), \ref{asmp:PseudoGradMono}(i) and \ref{asmp:strongPseudo}(i).  Let each player's dynamics $\widetilde{\mathcal{P}}_i$, be as in (\ref{eq:agentActionEstimateProj}), or overall $\widetilde{\mathcal{P}}$,   (\ref{eq:overallDynProj}).  Then, for any $x(0) \in \Omega$ and any $z(0)$, the solution of (\ref{eq:overallDynProj}) asymptotically converges to $\mathbf{1}_N\otimes x^*$, and the actions components converge to the NE of the game, $x^*$.
\end{thm}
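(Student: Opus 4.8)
The plan is to mirror the Lyapunov argument of Theorem \ref{thm:strongPseudoComm}, but to combine the projection handling of Lemma \ref{lemma:ForwardEIPProj} with the variational characterization of the Nash equilibrium. By Lemma \ref{lemma:eqIsConsensusProj}, the equilibrium of (\ref{eq:overallDynProj}) is $\overline{\mathbf{x}} = \mathbf{1}_N \otimes x^*$, so I would take the radially unbounded candidate $V(\mathbf{x}) = \tfrac12\|\mathbf{x} - \overline{\mathbf{x}}\|^2$ and differentiate it along the absolutely continuous solutions of (\ref{eq:overallDynProj}), whose existence and uniqueness are guaranteed by Theorem 1 in \cite{Brogliato2006}.

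The core computation is the projected action component. Writing $x = \mathcal{R}\mathbf{x}$ and using $\mathcal{R}\overline{\mathbf{x}} = x^*$, the action part of $\dot V$ equals $(x-x^*)^T\Pi_\Omega(\mathcal{R}\mathbf{x}, -\mathbf{F}(\mathbf{x}) - \mathcal{R}\mathbf{L}\mathbf{x})$. Since the projected field lies in $T_\Omega(x)$, its residual against $-\mathbf{F}(\mathbf{x}) - \mathcal{R}\mathbf{L}\mathbf{x}$ lies in $N_\Omega(x)$; evaluating the normal-cone inequality at the feasible point $x^* \in \Omega$ yields
\[ (x-x^*)^T\Pi_\Omega(\mathcal{R}\mathbf{x}, -\mathbf{F}(\mathbf{x}) - \mathcal{R}\mathbf{L}\mathbf{x}) \le (x-x^*)^T(-\mathbf{F}(\mathbf{x}) - \mathcal{R}\mathbf{L}\mathbf{x}), \]
exactly as in (\ref{eq:eipProj1}). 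Adding the unprojected estimate term $-(\mathbf{x}-\overline{\mathbf{x}})^T\mathcal{S}^T\mathcal{S}\mathbf{L}\mathbf{x}$ and using $\mathcal{R}^T\mathcal{R} + \mathcal{S}^T\mathcal{S} = I$ from (\ref{eq:IdRSMatrix}) together with $\mathbf{L}\overline{\mathbf{x}} = 0$, the two Laplacian contributions recombine into a single quadratic form, giving
\[ \dot V \le -(x-x^*)^T\mathbf{F}(\mathbf{x}) - (\mathbf{x}-\overline{\mathbf{x}})^T\mathbf{L}(\mathbf{x}-\overline{\mathbf{x}}). \]

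To sign the first term I would invoke monotonicity of $\mathbf{F}$ (Assumption \ref{asmp:strongPseudo}(i)) against $\overline{\mathbf{x}} = \mathbf{1}_N\otimes x^*$, giving $(x-x^*)^T\mathbf{F}(\mathbf{x}) \ge (x-x^*)^T F(x^*)$, and then the variational inequality (\ref{eq:ViNash}), $(x-x^*)^TF(x^*)\ge 0$, to conclude $-(x-x^*)^T\mathbf{F}(\mathbf{x})\le 0$; the second term is nonpositive since $\mathbf{L}$ is positive semi-definite. Hence $\dot V \le 0$, every solution is bounded, and I would close with LaSalle's invariance principle. On the set $\dot V = 0$ both nonpositive terms must vanish: the Laplacian term forces consensus $\mathbf{x}=\mathbf{1}_N\otimes x$ by Assumption \ref{asmp:CxnGraph} and (\ref{eq:nullExpL}), whence $\mathbf{F}(\mathbf{x}) = F(x)$ and $(x-x^*)^TF(x)=0$; combining strict monotonicity of $F$ (Assumption \ref{asmp:PseudoGradMono}(i)) with the VI then forces $x=x^*$, so the largest invariant set is $\{\overline{\mathbf{x}}\}$ and the action components converge to $x^*$.

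The main obstacle is the projection step. Unlike the smooth Theorem \ref{thm:strongPseudoComm}, where $F(x^*)=0$ lets the pseudo-gradient term be signed directly, in the compact case $F(x^*)$ need not vanish, so the argument must route through both the normal-cone inequality for the projected field \emph{and} the variational inequality for $x^*$, inserted at precisely the right places (once to bound $\dot V$, and again inside the LaSalle step to rule out $x\neq x^*$). A secondary technical point is that (\ref{eq:overallDynProj}) is a discontinuous projected dynamical system, so the invariance principle must be applied in the Carath\'eodory sense as in \cite{NZ96} rather than the classical smooth form, and boundedness of the unprojected estimate coordinates must be obtained from the decrease of $V$ itself rather than from the constraint set.
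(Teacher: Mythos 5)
Your dissipation inequality is correct and is, in substance, the paper's own computation, just unfolded: the one-sided normal-cone bound you apply at the feasible point $x^*$ is (\ref{eq:eipProj1}) specialized to $\mathcal{R}\mathbf{x}' = x^*$, and your explicit appeal to the variational inequality (\ref{eq:ViNash}) is precisely what the paper's swapped inequality (\ref{eq:eipProj2}) reduces to at the equilibrium, where the projected field vanishes by (\ref{eq:EqExPsGradProj}). Your recombination of the two Laplacian contributions via $\mathcal{R}^T\mathcal{R}+\mathcal{S}^T\mathcal{S}=I$ from (\ref{eq:IdRSMatrix}) and $\mathbf{L}\overline{\mathbf{x}}=0$ reproduces (\ref{dotV_Proj_Thms}), and you correctly identified the structural difference from Theorem \ref{thm:strongPseudoComm} --- that $F(x^*)$ need not vanish on the boundary, so the VI must enter both in bounding $\dot V$ and in ruling out $x \neq x^*$. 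Your strict-monotonicity-plus-VI argument on the limit set is also the same computation the paper performs.

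The genuine gap is the closing invariance step. You invoke LaSalle ``in the Carath\'eodory sense as in \cite{NZ96},'' but \cite{NZ96} contains no invariance principle: its convergence results (Theorems 3.6--3.7, which the paper's Lemma \ref{lemma:perfectInfoProj} adapts) are direct monotone-decrease-plus-contradiction arguments, not LaSalle-type theorems. The classical LaSalle principle does not apply to (\ref{eq:overallDynProj}) because the right-hand side is discontinuous on $\partial\Omega$; a nonsmooth invariance principle would additionally require uniqueness and continuous dependence on initial data (so that $\omega$-limit sets are invariant), which do hold here via monotonicity and \cite{Brogliato2006} but are neither established by you nor supplied by your citation. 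The paper deliberately sidesteps exactly this: it shows $V$ is nonincreasing hence convergent to some $\underline{V}\geq 0$, observes that $\int_0^t (\mathbf{x}(\tau)-\overline{\mathbf{x}})^T\mathbf{L}(\mathbf{x}(\tau)-\overline{\mathbf{x}})\,d\tau$ is then bounded, applies Barbalat's lemma \cite{K02} (using uniform continuity of the bounded trajectory) to get $\mathbf{L}(\mathbf{x}(t)-\overline{\mathbf{x}})\to 0$, i.e., consensus $\mathbf{x}(t)\to \mathbf{1}_N\otimes x$ in the limit, and finally extracts a sequence $t_k$ with $\dot V(t_k)\to 0$ along which your strict-monotonicity-plus-VI computation forces $x=x^*$ and $\underline{V}=0$ by contradiction. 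Your Lyapunov analysis plugs into this Barbalat route unchanged, so the repair is mechanical; but as written, your final step rests on a theorem that your cited source does not contain.
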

\begin{proof}
The proof  is similar to the proof of Theorem \ref{thm:strongPseudoComm} except that,  instead of LaSalle's invariance principle, the argument is based on Barbalat's Lemma, \cite{K02}, since the system is time-varying. Let $V(t,\mathbf{x}) = \frac{1}{2}\|\mathbf{x}(t)-\overline{\mathbf{x}}\|^2$, where by Lemma \ref{lemma:eqIsConsensusProj},  $\overline{\mathbf{x}} = \mathbf{1}_N\otimes x^*$. Using (\ref{eq:EIP_projbis}) in Lemma \ref {lemma:ForwardEIPProj}, for $\mathbf{x'}(t)=\overline{\mathbf{x}}$, $\mathbf{u}(t) = - \mathbf{L} \mathbf{x}(t)$,  $\mathbf{u'}(t) = - \mathbf{L} \overline{\mathbf{x}}$, it follows that for any $x(0) \in \Omega$ and any $z(0)$, along  (\ref{eq:overallDynProj}), 
\begin{align}\label{dotV_Proj_Thms}
	\dot{V} \leq & -(\mathbf{x}(t)-\bar{\mathbf{x}})^T\mathcal{R}^T(\mathbf{F}(\mathbf{x}(t)) - \mathbf{F}(\bar{\mathbf{x}}))\\
	&-(\mathbf{x}(t)-\bar{\mathbf{x}})^T\mathbf{L}(\mathbf{x}(t)-\bar{\mathbf{x}}) \notag
\end{align}
Under Assumption \ref{asmp:strongPseudo}(i), $\dot{V} \leq 0$, for all $\mathcal{R}\mathbf{x}(t) \in \Omega$, $z(0)$. Thus $V(t,\mathbf{x}(t))$ is non-increasing and bounded from below by $0$, hence it converges as $t \rightarrow \infty$ to some $\underline{V} \geq 0$. Then, under Assumption \ref{asmp:strongPseudo}(i), it follows that $\lim_{t\rightarrow \infty} \int_0^t (\mathbf{x}(\tau)-\bar{\mathbf{x}})^T\mathbf{L}(\mathbf{x}(\tau)-\bar{\mathbf{x}}) d \tau $ exists and is finite. Since $\mathbf{x}(t)$ is absolutely continuous, hence uniformly continuous, from Barbalat's Lemma in \cite{K02} it follows that $ \mathbf{L} ( \mathbf{x}(t) - \bar{\mathbf{x}}) \rightarrow 0$ as $t\rightarrow \infty$. Since $\overline{\mathbf{x}} = \mathbf{1}_N\otimes x^*$, this means that $\mathbf{x}(t) \rightarrow \mathbf{1}_N\otimes x$, as $t \rightarrow \infty$  for some $x \in \Omega$. Then $V(t,\mathbf{x}(t)) = \frac{1}{2}\|\mathbf{x}(t)-  \overline{\mathbf{x}} \|^2 \rightarrow 
\frac{1}{2}\|\mathbf{1}_N\otimes (x -x^* )\|^2= \underline{V}$ as $t \rightarrow \infty$. If $\underline{V} =0$ the proof if completed.  Using  the strict monotonicity assumption \ref{asmp:PseudoGradMono}(i), it can be shown  by a contradiction argument that $x=x^*$ and $\underline{V}=0$. Assume that $x\neq x^*$ and $\underline{V}>0$. Then from (\ref{dotV_Proj_Thms}) there exists a sequence $\{ t_k \}, t_k \rightarrow \infty$, as $k \rightarrow \infty$, such that $\dot{V}(t_k) \rightarrow 0$ as $k \rightarrow \infty$. 
Suppose this claim is false. Then there exists a $d >0$ and a $T>0$ such that $\dot{V}(t) \leq - d$, for all $ t > T$, which contradicts $\underline{V}>0$, hence the claim is true. % In addition $\mathbf{x}(t_k) \rightarrow \mathbf{1}_N\otimes x$, as $k \rightarrow \infty$. 
Substituting $\{ t_k \} $ into  (\ref{dotV_Proj_Thms}) yields
\begin{align*}
	\dot{V}(t_k)  \leq & -(\mathbf{x}(t_k)-\bar{\mathbf{x}})^T\mathcal{R}^T(\mathbf{F}(\mathbf{x}(t_k)) - \mathbf{F}(\bar{\mathbf{x}}))\\
	&-(\mathbf{x}(t_k)-\bar{\mathbf{x}})^T\mathbf{L}(\mathbf{x}(t_k)-\bar{\mathbf{x}}) \notag
\end{align*}
where the left-hand side converges to $0$ as $k \rightarrow \infty$. Hence, 
\begin{align*}
0  \leq & - \lim_{k\rightarrow \infty} (\mathbf{x}(t_k)-\bar{\mathbf{x}})^T\mathcal{R}^T(\mathbf{F}(\mathbf{x}(t_k)) - \mathbf{F}(\bar{\mathbf{x}}))\\
	&- \lim_{k\rightarrow \infty} (\mathbf{x}(t_k)-\bar{\mathbf{x}})^T\mathbf{L}(\mathbf{x}(t_k)-\bar{\mathbf{x}}) \notag
\end{align*}
Using $\lim_{k \rightarrow \infty} \mathbf{x}(t_k) =  \mathbf{1}_N\otimes x \in Null(\mathbf{L})$, this leads to 
\begin{align*}
0  \leq & -   [\mathbf{1}_N\otimes ( x -x ^*)]^T\mathcal{R}^T(\mathbf{F}(\mathbf{1}_N\otimes x) - \mathbf{F}(\mathbf{1}_N\otimes x^*))
\end{align*}
or,  $0 \leq -   ( x -x ^*)]^T (F(x) - F( x^*)) < 0$, by the strict monotonicity Assumption \ref{asmp:PseudoGradMono}(i), since we assumed $ x \neq x^*$. This  is a contradiction, hence $x=x^*$ and $\underline{V}=0$.  
\end{proof}

%DONE *** \textcolor{red}{TBD} The argument follows similarly, but need use Barbalat Lemma (Chapter 8) instead of LaSalle, since the system is time-varying.

\begin{thm} \label{thm:LipPseudoCommProj}
Consider a game $\mathcal{G}(\mathcal{I},J_i, \Omega_i)$ over a communication graph $G_c$ under Assumptions \ref{asmp:CxnGraph}, \ref{asmp:Jsmooth}(ii), \ref{asmp:PseudoGradMono}(ii) and  \ref{asmp:strongPseudo}(ii).  Let each player's dynamics $\widetilde{\mathcal{P}}_i$ be as in (\ref{eq:agentActionEstimateProj}) or overall $\widetilde{\mathcal{P}}$   (\ref{eq:overallDynProj}).  Then, if 
$\lambda_2(L)  >\frac{\theta^2}{\mu} + \theta$, for any  $x(0) \in \Omega$ and any $z(0)$, the solution of (\ref{eq:overallDynProj}) asymptotically converges to $\mathbf{1}_N\otimes x^*$, and the actions converge to the NE of the game, $x^*$. If $\lambda_2(L)  >\frac{N \theta^2}{\mu} + \theta$, then convergence is exponential.  
\end{thm}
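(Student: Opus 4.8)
The plan is to follow the subspace-decomposition and matrix-pencil argument of Theorem \ref{thm:LipPseudoComm} essentially verbatim, but to source the bound on $\dot V$ from the projected incremental-passivity inequality (\ref{eq:EIP_projbis}) of Lemma \ref{lemma:ForwardEIPProj} rather than from a direct computation, and to close the argument with Barbalat's Lemma in place of LaSalle's invariance principle (as in Theorem \ref{thm:strongPseudoCommProj}), since the projected right-hand side makes the system time-varying.

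First I would fix the equilibrium: by Lemma \ref{lemma:eqIsConsensusProj} the equilibrium of (\ref{eq:overallDynProj}) is $\overline{\mathbf{x}} = \mathbf{1}_N\otimes x^*$, and I take the time-varying Lyapunov candidate $V(t,\mathbf{x}) = \frac{1}{2}\|\mathbf{x}(t)-\overline{\mathbf{x}}\|^2$. Applying (\ref{eq:EIP_projbis}) with $\mathbf{x}'(t)=\overline{\mathbf{x}}$, $\mathbf{u}(t)=-\mathbf{L}\mathbf{x}(t)$, $\mathbf{u}'(t)=-\mathbf{L}\overline{\mathbf{x}}=\mathbf{0}$ yields, along (\ref{eq:overallDynProj}),
$$\dot V \leq -(\mathbf{x}-\overline{\mathbf{x}})^T\mathcal{R}^T(\mathbf{F}(\mathbf{x})-\mathbf{F}(\overline{\mathbf{x}})) - (\mathbf{x}-\overline{\mathbf{x}})^T\mathbf{L}(\mathbf{x}-\overline{\mathbf{x}}),$$
which is exactly the right-hand side of (\ref{eq:dotSfull}), now as an inequality. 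This is the key observation: the projection only strengthens the estimate in the favourable direction, so every subsequent bound from the unconstrained proof is inherited unchanged.

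Second, I would decompose $\mathbf{x} = \mathbf{x}^{||} + \mathbf{x}^{\perp}$ into consensus and orthogonal components as in Theorem \ref{thm:LipPseudoComm}, use $\mathbf{L}\mathbf{x}^{||}=0$ and $(\mathbf{x}^\perp)^T\mathbf{L}\mathbf{x}^\perp \geq \lambda_2(L)\|\mathbf{x}^\perp\|^2$, together with the Lipschitz bound of Assumption \ref{asmp:strongPseudo}(ii) on the cross terms and the strong-monotonicity bound of Assumption \ref{asmp:PseudoGradMono}(ii) on the consensus term, to reproduce the estimate
$$\dot V \leq -\begin{bmatrix}\|x-x^*\| & \|\mathbf{x}^\perp\|\end{bmatrix}\Theta\begin{bmatrix}\|x-x^*\| \\ \|\mathbf{x}^\perp\|\end{bmatrix},$$
with $\Theta = \begin{bmatrix}\mu & -\theta \\ -\theta & \lambda_2(L)-\theta\end{bmatrix}$, positive definite precisely when $\lambda_2(L) > \theta^2/\mu + \theta$. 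For the asymptotic claim I would then invoke Barbalat's Lemma as in Theorem \ref{thm:strongPseudoCommProj}: $V$ is non-increasing and bounded below, hence converges, so $\int_0^\infty(\|x-x^*\|^2+\|\mathbf{x}^\perp\|^2)\,d\tau$ is finite; since the PDS solution $\mathbf{x}(t)$ is absolutely continuous, hence uniformly continuous, the integrand tends to zero, forcing $\mathbf{x}^\perp\to 0$ and $x\to x^*$, i.e.\ $\mathbf{x}(t)\to\mathbf{1}_N\otimes x^*$. For exponential convergence I would instead use $\|x-x^*\|=\frac{1}{\sqrt N}\|\mathbf{x}^{||}-\overline{\mathbf{x}}\|$ to rewrite the bound with $\Theta_N = \begin{bmatrix}\mu/N & -\theta \\ -\theta & \lambda_2(L)-\theta\end{bmatrix}$, positive definite when $\lambda_2(L) > N\theta^2/\mu + \theta$, yielding $\dot V \leq -\eta V$ for some $\eta>0$.

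The main obstacle is entirely the time-varying, discontinuous nature of the projected vector field: because $\dot V$ is available only as the inequality (\ref{eq:EIP_projbis}) and the invariance principle does not apply to time-varying systems, I cannot localise the limit set by setting $\dot V=0$. The Barbalat route circumvents this, but it requires confirming uniform continuity of the integrand, which follows from the absolute continuity of the PDS solution guaranteed by Theorem 1 in \cite{Brogliato2006}. All the algebra is identical to the unconstrained Theorem \ref{thm:LipPseudoComm}.
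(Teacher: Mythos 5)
Your proposal is correct and follows essentially the same route as the paper's proof: it uses the equilibrium characterization of Lemma \ref{lemma:eqIsConsensusProj}, the projected incremental-passivity bound (\ref{eq:EIP_projbis}) of Lemma \ref{lemma:ForwardEIPProj} with $\mathbf{u}(t)=-\mathbf{L}\mathbf{x}(t)$ to recover (\ref{dotV_Proj_Thms}), the consensus/orthogonal decomposition and $\Theta$ (resp.\ $\Theta_N$) estimates from Theorem \ref{thm:LipPseudoComm}, and Barbalat's Lemma (as in Theorem \ref{thm:strongPseudoCommProj}) in place of LaSalle's invariance principle. The only difference is presentational: you spell out steps the paper leaves implicit, including the correct observation that $\mathbf{u}'=-\mathbf{L}\overline{\mathbf{x}}=\mathbf{0}$ and that positive definiteness of $\Theta$ lets Barbalat yield $x(t)\to x^*$ and $\mathbf{x}^\perp(t)\to 0$ directly, without the contradiction argument needed in Theorem \ref{thm:strongPseudoCommProj}.
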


\begin{proof} The proof is similar to the proof of Theorem \ref{thm:LipPseudoComm}. Based on Lemma \ref{lemma:eqIsConsensusProj}, using $V(t,\mathbf{x},\overline{\mathbf{x}}) = \frac{1}{2}\|\mathbf{x}(t)-\overline{\mathbf{x}}\|^2$ and (\ref{eq:EIP_projbis}) in Lemma \ref {lemma:ForwardEIPProj}, for $\mathbf{u}(t) = - \mathbf{L} \mathbf{x}(t)$,  $\mathbf{u'}(t) = - \mathbf{L} \overline{\mathbf{x}}(t)$ one can obtain  (\ref{dotV_Proj_Thms}) along  (\ref{eq:overallDynProj}), for any $x(0) \in \Omega$ and any $z(0)$. Then further decomposing $\mathbf{x}(t)$ into $\mathbf{x}^\perp(t)$ and $ \mathbf{x}^{||}(t)$ components as in the proof of Theorem \ref{thm:LipPseudoComm} leads to an inequality as (\ref{eq:dotVstrong_asy}), where $\Theta$ is positive under the conditions in the theorem. Then invoking Barbalat's Lemma in \cite{K02} as in the proof of Theorem \ref{thm:strongPseudoCommProj} leads to  $x(t) \rightarrow x^*$ and $\mathbf{x}^\perp(t) \rightarrow 0$ as $t \rightarrow \infty$.
\end{proof}

%**** TBD 
Note also that we can consider a  gain parameter $\frac{1}{\epsilon}>0$ on the estimates in (\ref{eq:overallDynProj}) to improve the lower bound to $ \lambda_2(L)$. Consider
 \begin{align} \label{eq:overallDynProjEps}
	\widetilde{\mathcal{P}}_{\epsilon} :& \quad % \begin{cases}
 		\dot{\mathbf{x}} = \mathcal{R}^T\Pi_{\Omega}(\mathcal{R}\mathbf{x},-\mathbf{F}(\mathbf{x})- \frac{1}{\epsilon}\mathcal{R}\mathbf{L}\mathbf{x}) - \frac{1}{\epsilon}\mathcal{S}^T\mathcal{S}\mathbf{L}\mathbf{x}
\end{align}
Thus player $i$'s dynamics is as follows:
\begin{align} \label{eq:agentActionEstimateEPSProj1}
	\widetilde{\mathcal{P}}_{i,\epsilon} : \begin{cases}
	\begin{bmatrix}
	\dot{x}_i \\
	\\
	\dot{\mathbf{x}}^i_{-i}
	\end{bmatrix} =
	\begin{bmatrix}
	\Pi_{\Omega_i}(x_i,-\nabla_i J_i(x_i,\mathbf{x}^i_{-i}) \\
	\quad\quad\quad -\frac{1}{\epsilon}\mathcal{R}_i\sum_{j\in\mathcal{N}_i} ( \mathbf{x}^i - \mathbf{x}^j)) \\
	- \frac{1}{\epsilon}\mathcal{S}_i(\sum_{j\in\mathcal{N}_i} \mathbf{x}^i - \mathbf{x}^j)\\
	\end{bmatrix}
	\end{cases}
\end{align}
Following the proof of Theorem \ref{thm:LipPseudoComm} the matrix $\Theta$ becomes
\begin{align*}
	\Theta = \left [ \begin{array}{cc}  \mu & -\theta \\ -\theta & \frac{1}{\epsilon}\lambda_2(L) - \theta \end{array} \right ]
\end{align*}
where the condition for the matrix to be positive definite is $\lambda_2(L)  > \epsilon(\frac{\theta^2}{\mu} + \theta)$. As in the two-time scale analysis this $\epsilon$ is a global parameter.

\section{Numerical Examples}\label{Simulation}
\subsection{Unconstrained $\Omega$ and Dynamics}
%as used in \cite{WeiShiACC2017}.

\emph{Example 1:}  Consider a N-player quadratic game from economics, where 20 firms are involved in the production of a homogeneous commodity. The quantity produced by firm $i$ is denoted by $x_i$. The overall cost function of firm $i$  is  $J_i(x_i,x_{-i}) = c_i(x_i) - x_if(x)$, where $c_i(x_i) = (20 + 10(i-1))x_i$ is the production cost, $f(x) = 2200 - \sum_{i\in\mathcal{I}} x_i$ is the demand price, as in \cite{WeiShiACC2017}. We investigate the proposed dynamics  (\ref{eq:agentActionEstimate}) over a communication graph $G_c$ via simulation.   The initial conditions are selected  randomly from $[0,20]$. 
 Assumption \ref{asmp:PseudoGradMono}(i) and \ref{asmp:strongPseudo}(i) hold, so by Theorem \ref{thm:strongPseudoComm}  the dynamics  (\ref{eq:agentActionEstimate}) will converge even over a minimally connected graph. Figures  \ref{2Random} and \ref{2aCom} show the  convergence of (\ref{eq:agentActionEstimate})   over  a randomly generated communication graph $G_c$  (Fig. \ref{2_random_Graph}) and over a cycle $G_c$ graph (Fig. \ref{2cycleComm}), respectively.

\begin{figure}[ht]
\centering
\begin{minipage}[ht]{0.45\columnwidth}
\vspace{-1.8cm}	\centerline{\includegraphics[width=5cm]{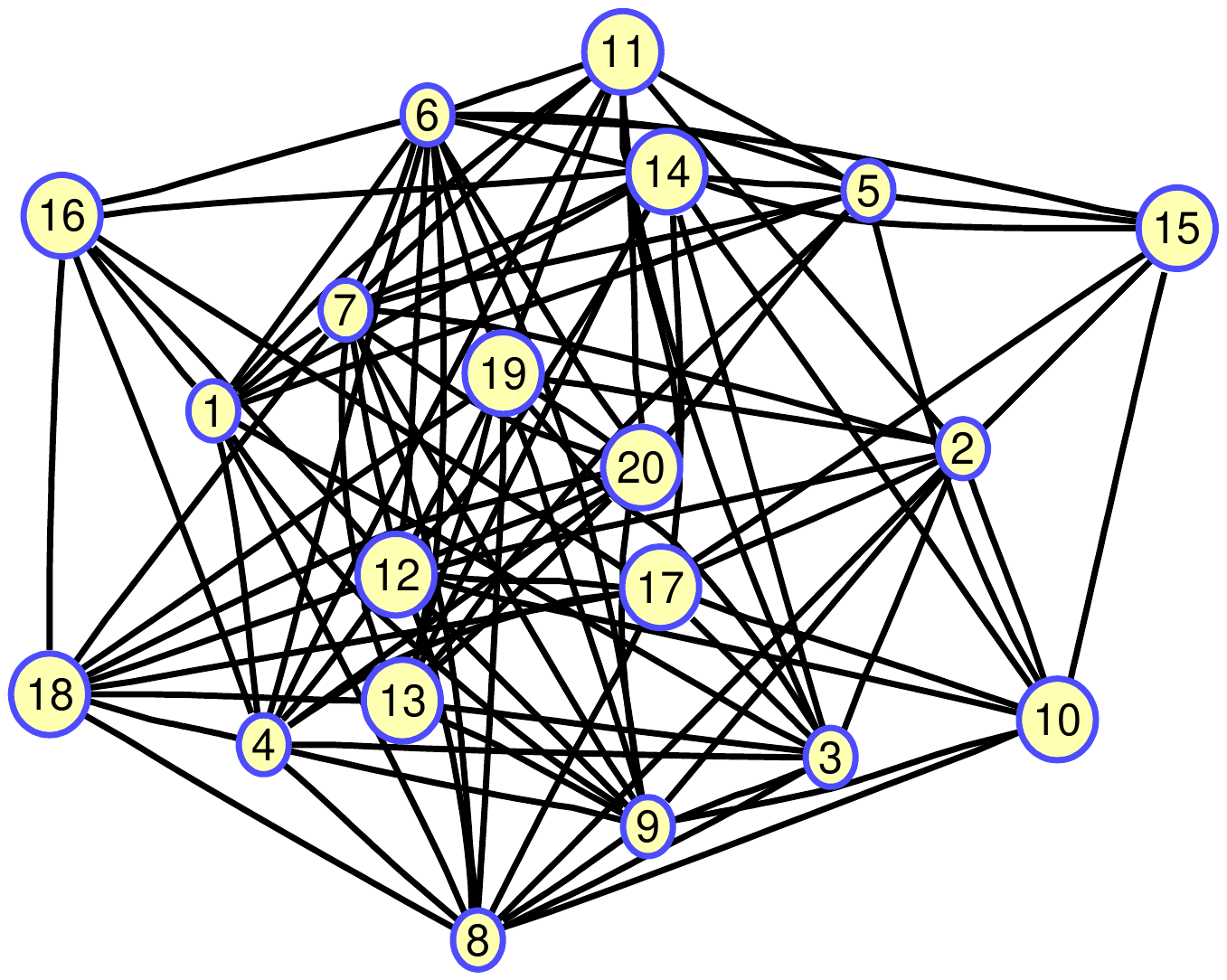}}
\vspace{-2.2cm}	\caption{Random $G_c$, $\lambda_2=4.95$}\label{2_random_Graph}%$\lambda_N=14.85$
\end{minipage}
\begin{minipage}[ht]{0.45\columnwidth}
\[
\xymatrixrowsep{4mm}
\xymatrixcolsep{4mm}
    \xymatrix{
     & *+[o][F]{1}\ar@{-}[r] & *+[o][F]{2}\ar@{-}[rd] & \\
    *+[o][F]{20}\ar@{-}[ru] & & & *+[o][F]{3} \\
     & *+[o][F]{5}\ar@{.}[lu]\ar@{-}[r] & *+[o][F]{4}\ar@{-}[ru] & \\
     }
\]
	\caption{Cycle $G_c$ Graph}
	\label{2cycleComm}
\end{minipage}
\end{figure}

\begin{figure}[ht]
\centering
\begin{minipage}[ht]{0.45\columnwidth}
	\centerline{\includegraphics[width=4cm]{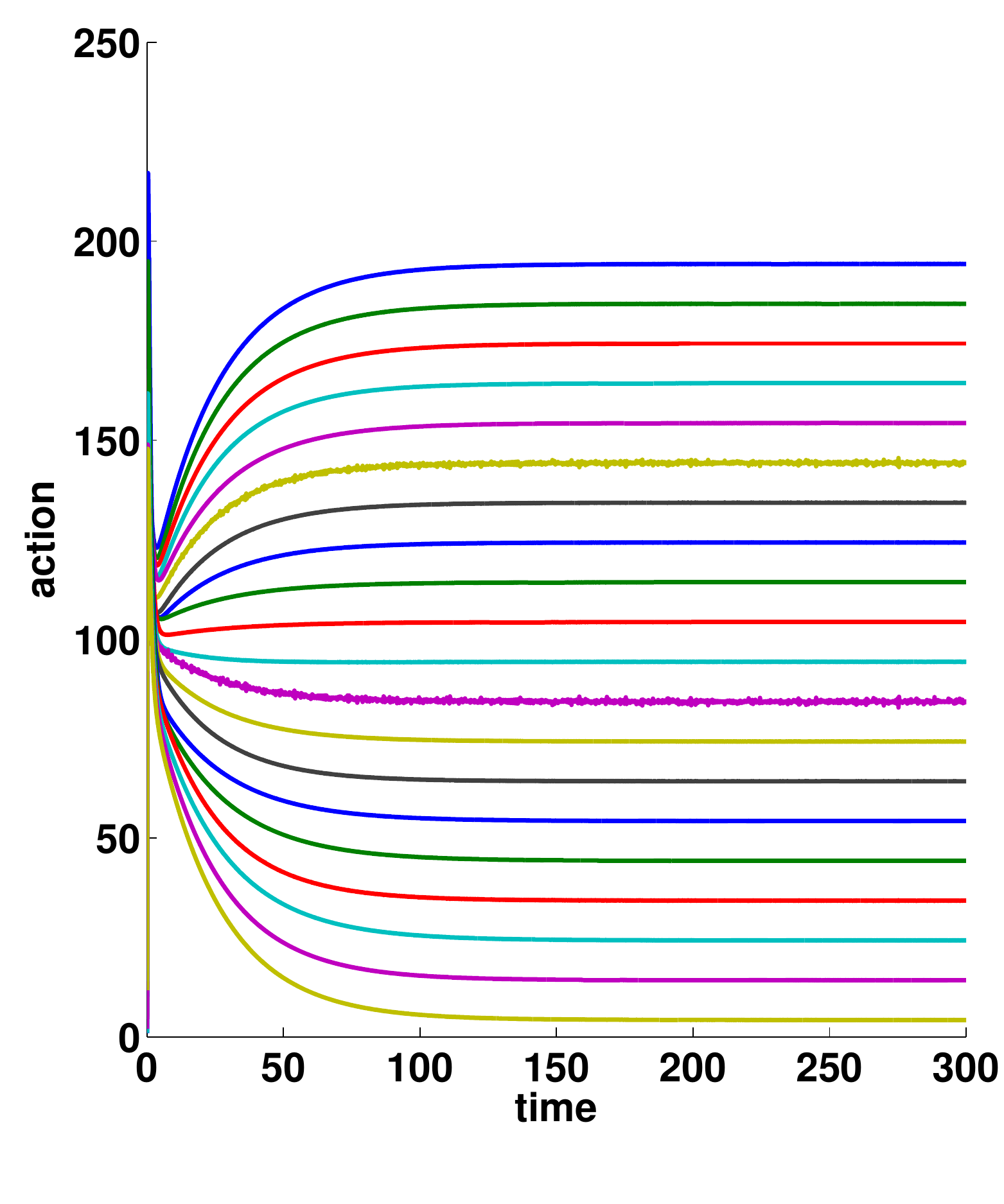}}
	\caption{(\ref{eq:agentActionEstimate}) over random $G_c$}\label{2Random}
	\end{minipage}
\begin{minipage}[ht]{0.45\columnwidth}
	\centerline{\includegraphics[width=4cm]{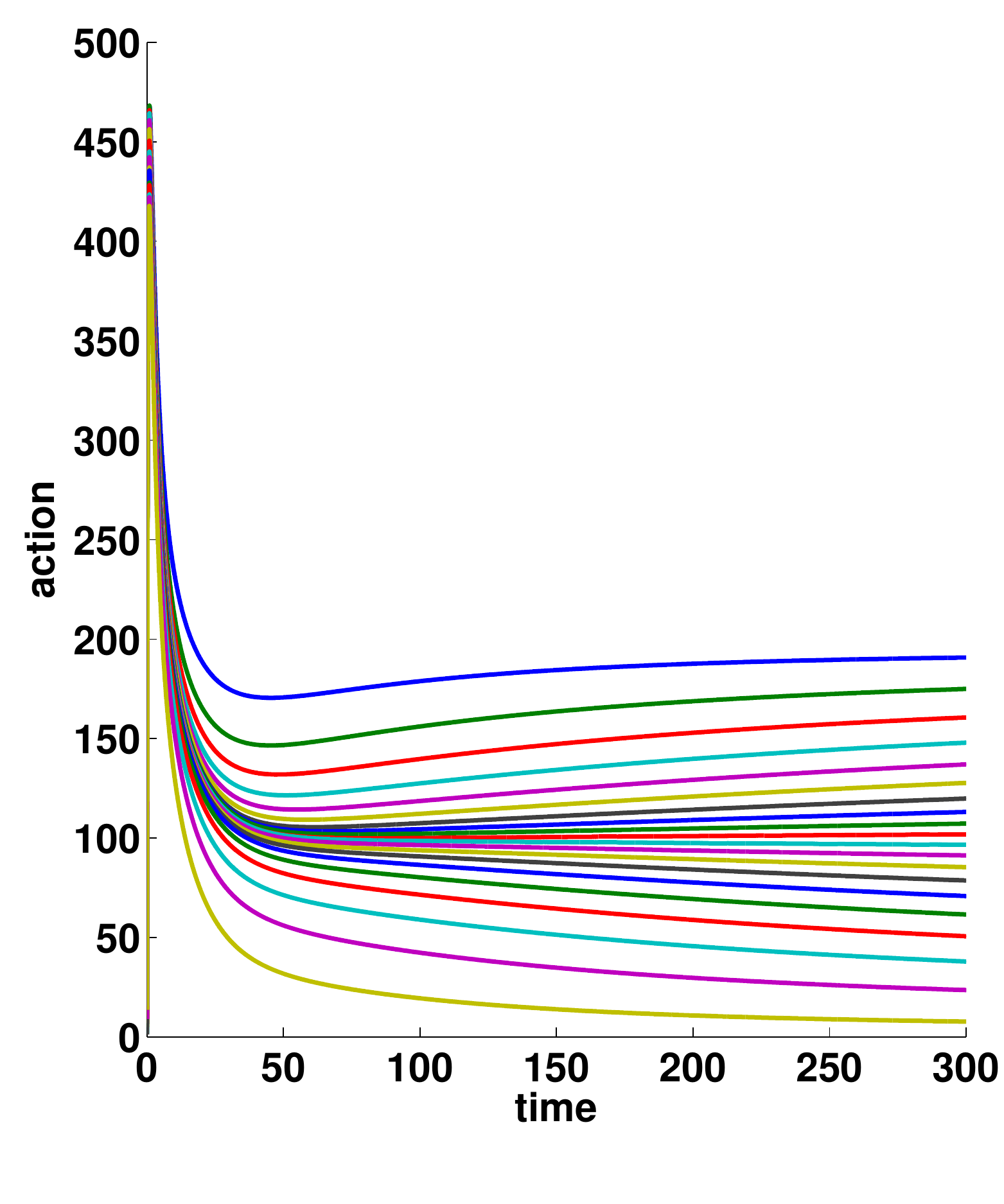}}
	\caption{(\ref{eq:agentActionEstimate}) over cycle $G_c$}\label{2aCom}
\end{minipage}
\end{figure}

%TBD OUT FOR NOW
%
%Next we provide for reference results for running the gradient dynamics in perfect information case (Fig. \ref{2Perf}), and the augmented gradient with estimation over a complete  $G_c$, (Fig. \ref{2cCom}).
%
%\begin{figure}[ht]
%\centering
%\begin{minipage}[ht]{0.45\columnwidth}
%	\centerline{\includegraphics[width=4cm]{figures/Example2_Perf_Action_0_20_Estimae_0_20}}
%	\caption{(\ref{eq:gradient}) perfect information}\label{2Perf}
%	\end{minipage}
%\begin{minipage}[ht]{0.45\columnwidth}
%	\centerline{\includegraphics[width=4cm]{figures/2cCom300E1}}
%	\caption{(\ref{eq:agentActionEstimate}) over complete $G_c$ }\label{2cCom}
%\end{minipage}
%\end{figure}

%Notice that (\ref{eq:agentActionEstimate}), or (\ref{eq:agentActionEstimate}) with $\epsilon =1$,  is able to converge to a NE with a sparsely connection communication graph. In fact Assumption \ref{asmp:PseudoGradMono}(ii) holds also so we can apply Theorem \ref{thm:STRGMonPseudoComm} and use   (\ref{eq:agentActionEstimate}), $\epsilon >0$ to speed up convergence. Running  (\ref{eq:agentActionEstimate}) with the gain term $\epsilon=10$ the convergence speed drastically improves as shown in Figure \ref{2cCom300E10}, Figure \ref{2bCom300}.

\emph{Example 2:}  
Consider a second example of an 8 player  game with   $J_i(x_i,x_{-i}) = c_i(x_i) - x_if(x)$, $c_i(x_i) = (10 + 4(i-1))x_i$, $f(x) = 600 - \sum_{i\in\mathcal{I}} x_i^2$, as in  \cite{FPAuto2016}. 
Here Assumption \ref{asmp:strongPseudo}(i) on $\mathbf{F}$ does not hold globally, so cannot apply Theorem \ref{thm:strongPseudoComm}, but Assumption \ref{asmp:strongPseudo}(ii) holds locally. 
 By Theorem  \ref{thm:LipPseudoComm},  (\ref{eq:agentActionEstimate})   will converge depending on $\lambda_2(L)$. Figure  \ref{1Random} shows the  convergence of (\ref{eq:agentActionEstimate})  over  a sufficiently connected, randomly generated communication graph $G_c$  as depicted in Fig. \ref{1_random_Graph}. Over a cycle $G_c$ graph, (\ref{eq:agentActionEstimate}) does not converge. Alternatively, by Theorem  \ref{thm:STRGMonPseudoComm}, a higher $1/\epsilon$  (time-scale decomposition) can balance the connectivity loss.
Fig. \ref{1bComCycle} shows convergence  for  (\ref{eq:agentActionEstimateEPS}) with $1/\epsilon = 200$, over a cycle $G_c$ graph  as shown in Fig. \ref{1cycleComm}. 
 The initial conditions are selected  randomly from $[0,20]$. 

 %, which in fact can be relaxed to hold only locally around $x^*$. 
%LATER 
%Comparing convergence of Figure \ref{2aCom} and Figure \ref{1aComplete} the difference in the requirements on the connectivity of the graph  is due to different coupling strength  in the cost functions between agents. The other example has the gradient dependent on the square of the agents actions but is linear in this example. 

\begin{figure}[ht]
\centering
\begin{minipage}[ht]{0.45\columnwidth}
\vspace{-1.8cm}	\centerline{\includegraphics[width=5cm]{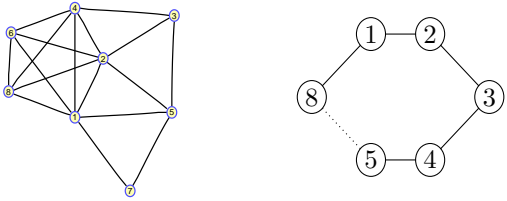}}
\vspace{-2.2cm}	\caption{Random $G_c$, $\lambda_2=1.67$}\label{1_random_Graph}%, $\lambda_N=7.24$
\end{minipage}
\begin{minipage}[ht]{0.45\columnwidth}
\[
\xymatrixrowsep{4mm}
\xymatrixcolsep{4mm}
    \xymatrix{
     & *+[o][F]{1}\ar@{-}[r] & *+[o][F]{2}\ar@{-}[rd] & \\
    *+[o][F]{8}\ar@{-}[ru] & & & *+[o][F]{3} \\
     & *+[o][F]{5}\ar@{.}[lu]\ar@{-}[r] & *+[o][F]{4}\ar@{-}[ru] & \\
     }
\]
	\caption{Cycle $G_c$ Graph}
	\label{1cycleComm}
\end{minipage}
\end{figure}

\begin{figure}[ht]
\centering
\begin{minipage}[ht]{0.45\columnwidth}
	\centerline{\includegraphics[width=4cm]{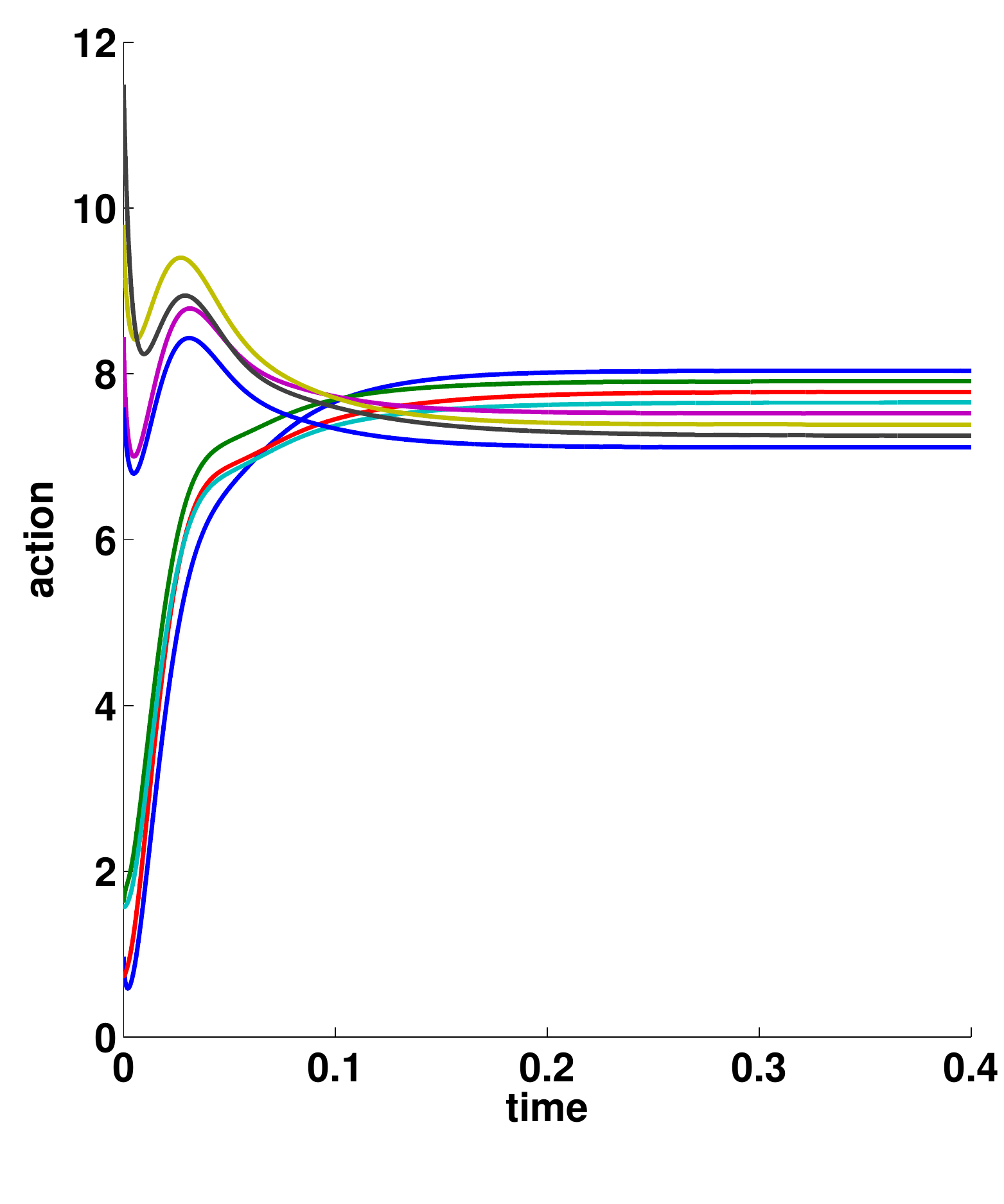}}
	\caption{(\ref{eq:agentActionEstimate}) over random $G_c$}\label{1Random}
\end{minipage}
\begin{minipage}[ht]{0.45\columnwidth}
	\centerline{\includegraphics[width=4cm]{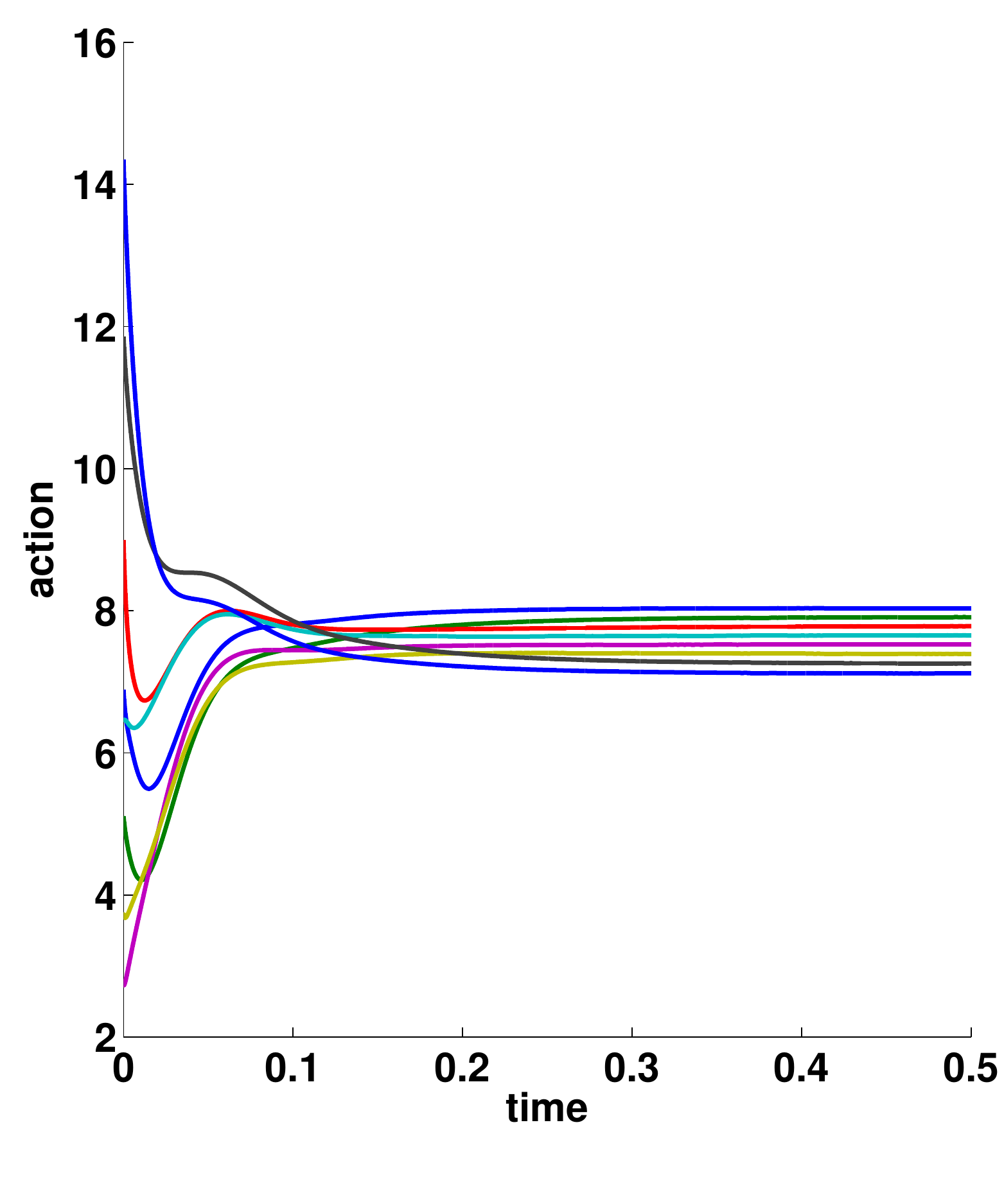}}
	\caption{(\ref{eq:agentActionEstimateEPS}) cycle $G_c$, $1/\epsilon = 200$}\label{1bComCycle}
\end{minipage}%\vspace{-0.3cm}
\end{figure}

\subsection{Compact $\Omega$ and Projected Dynamics}

%as used in \cite{WeiShiACC2017}.

\emph{Example 1:}  $N=20$ and this time $\Omega_i =[0,200]$. We investigate the projected augmented gradient dynamics  (\ref{eq:agentActionEstimateProj}) over a  graph $G_c$.   The actions' initial conditions are selected  randomly from $[0,200]$, while the estimates from $[-2000,2000]$. 
 Assumption \ref{asmp:PseudoGradMono}(i) and \ref{asmp:strongPseudo}(i) hold, so by Theorem \ref{thm:strongPseudoCommProj}  the dynamics  (\ref{eq:agentActionEstimateProj}) will converge even over a minimally connected graph. Figures  \ref{2RandomProj} and \ref{2aComProj} show the  convergence of (\ref{eq:agentActionEstimateProj})   over  a randomly generated communication graph $G_c$  (Fig. \ref{2_random_GraphProj}) and over a cycle $G_c$ graph (Fig. \ref{2cycleCommProj}), respectively.

\begin{figure}[ht]
\centering
\begin{minipage}[ht]{0.45\columnwidth}
\vspace{-1.8cm}	\centerline{\includegraphics[width=5cm]{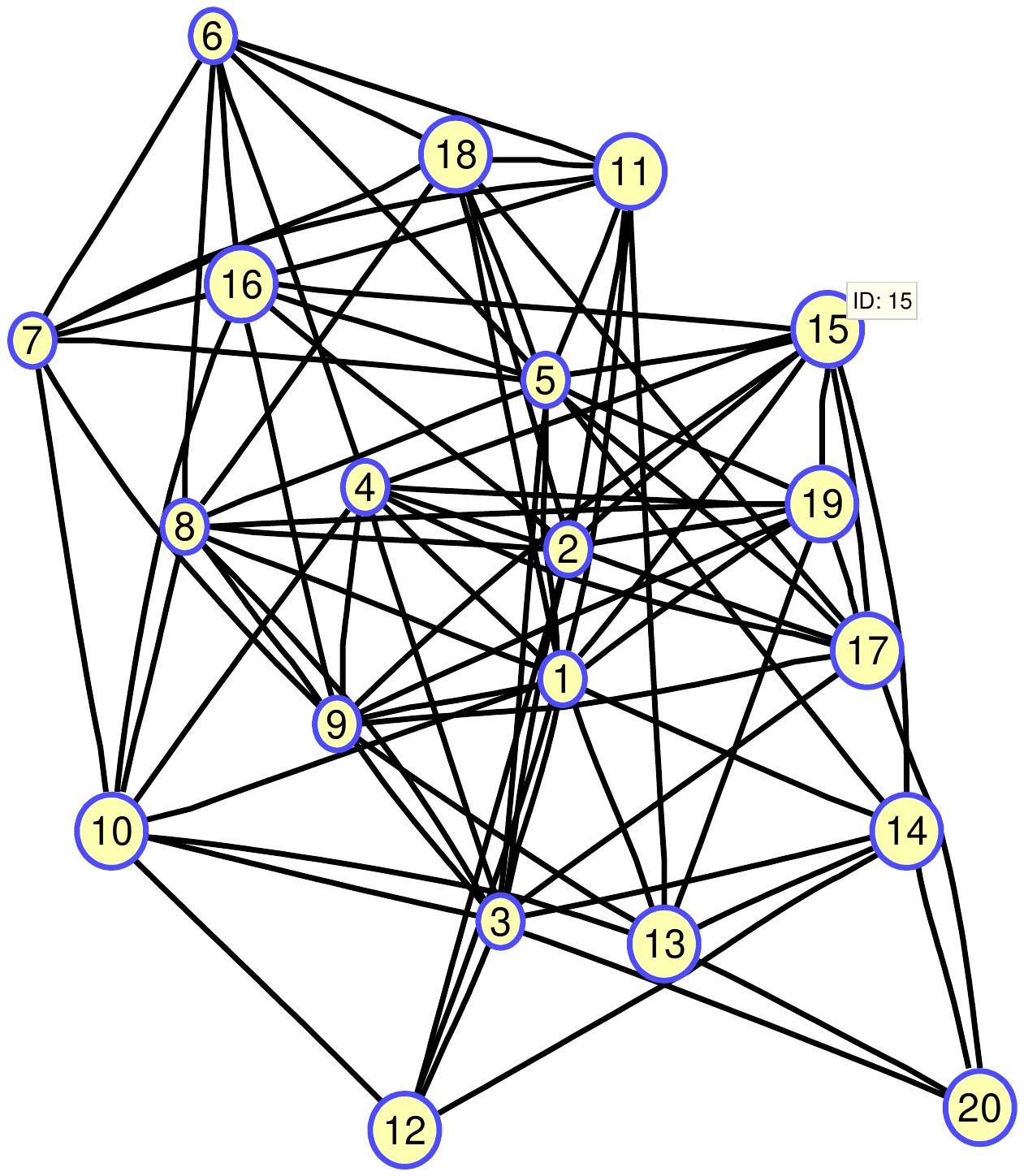}}
\vspace{-1.8cm}	\caption{Random $G_c$, $\lambda_2=3.13$}\label{2_random_GraphProj}%$\lambda_N=14.55$
\end{minipage}
\begin{minipage}[ht]{0.45\columnwidth}
\[
\xymatrixrowsep{4mm}
\xymatrixcolsep{4mm}
    \xymatrix{
     & *+[o][F]{1}\ar@{-}[r] & *+[o][F]{2}\ar@{-}[rd] & \\
    *+[o][F]{20}\ar@{-}[ru] & & & *+[o][F]{3} \\
     & *+[o][F]{5}\ar@{.}[lu]\ar@{-}[r] & *+[o][F]{4}\ar@{-}[ru] & \\
     }
\]
\vspace{0.2cm}	\caption{Cycle $G_c$ Graph}
	\label{2cycleCommProj}
\end{minipage}
\end{figure}

\begin{figure}[ht]
\centering
\begin{minipage}[ht]{0.45\columnwidth}
	\centerline{\includegraphics[width=4cm]{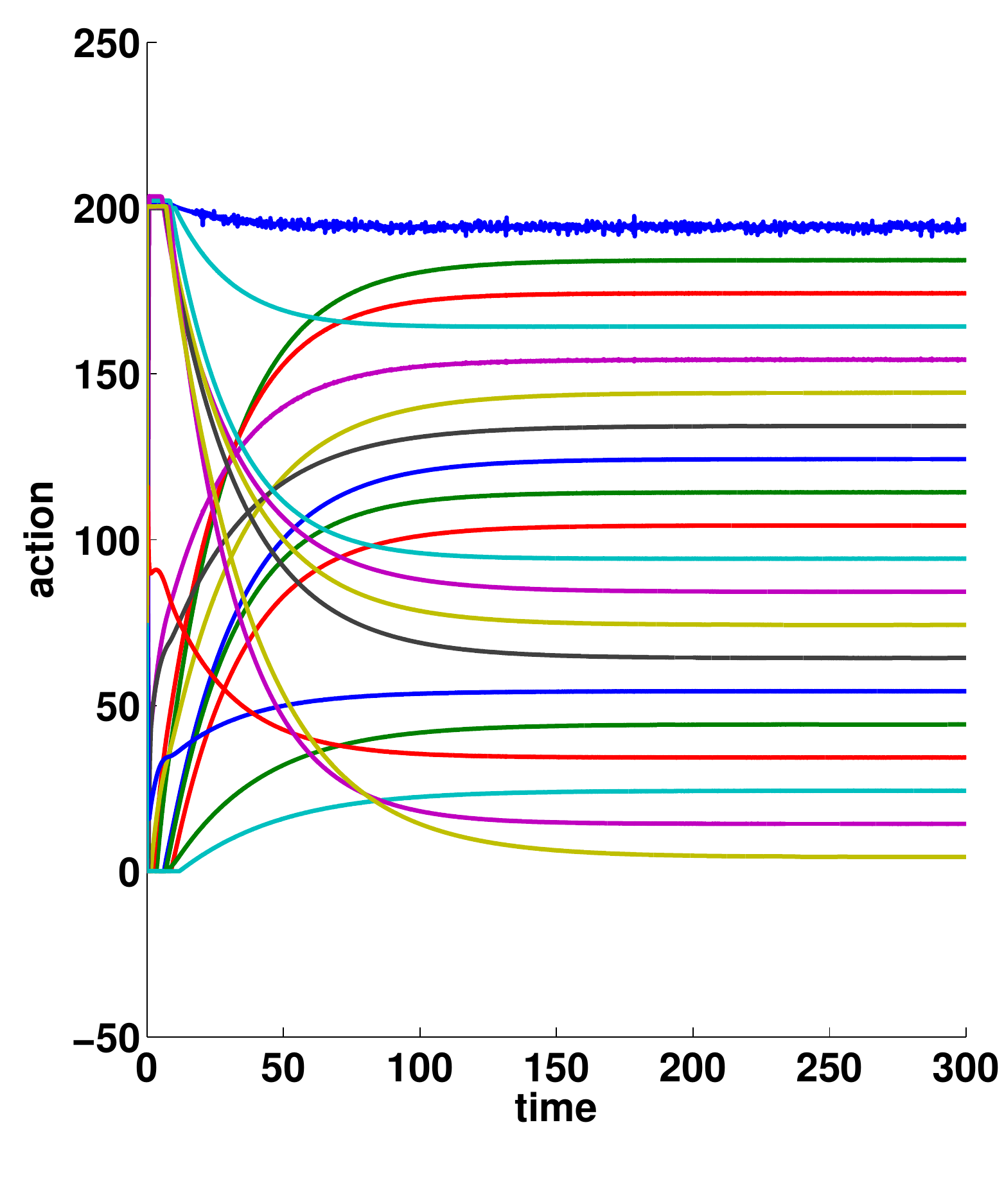}}
	\caption{(\ref{eq:agentActionEstimateProj}) over random $G_c$}\label{2RandomProj}
	\end{minipage}
\begin{minipage}[ht]{0.45\columnwidth}
	\centerline{\includegraphics[width=4cm]{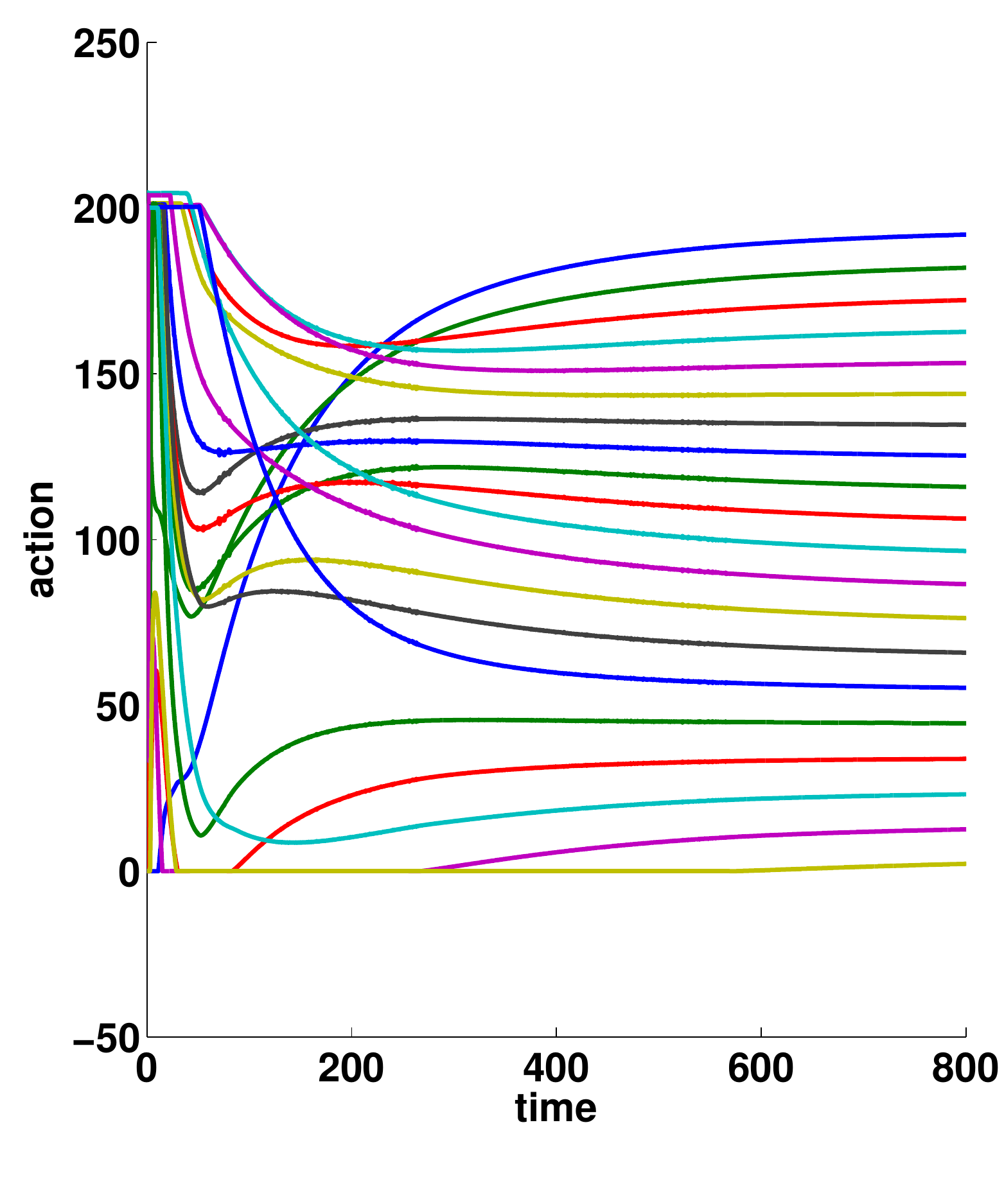}} 
	\caption{(\ref{eq:agentActionEstimateProj}) over cycle $G_c$}\label{2aComProj}
\end{minipage}
\end{figure}

\emph{Example 2: }  $N=8$ and  $\Omega_i =[0,20]$, as in  \cite{FPAuto2016}. Here Assumption \ref{asmp:strongPseudo}(i) on $\mathbf{F}$ does not hold globally, so cannot apply Theorem \ref{thm:strongPseudoCommProj}.  Under Assumption \ref{asmp:PseudoGradMono}(ii) and  \ref{asmp:strongPseudo}(ii) by Theorem  \ref{thm:LipPseudoComm},  (\ref{eq:agentActionEstimateProj})   will converge depending on $\lambda_2(L)$. Figure  \ref{1RandomProj} shows the  convergence of (\ref{eq:agentActionEstimateProj})  over  a sufficiently connected, randomly generated communication graph $G_c$  as in Fig. \ref{1_random_GraphProj}. A  higher $1/\epsilon$ on the estimates can balance the lack of  connectivity. %, by Theorem \ref{thm:STRGMonPseudoCommProj}. 
Fig. \ref{1bComCycleProj} shows results for  (\ref{eq:agentActionEstimateEPSProj1}) with $1/\epsilon = 200$, over a cycle $G_c$ graph as  in Fig. \ref{1cycleCommProj}.
 The actions' initial conditions are selected  randomly from $[0,20]$ and the estimates from $[-200,200]$.

%, which in fact can be relaxed to hold only locally around $x^*$. 
%LATER 
%Comparing convergence of Figure \ref{2aCom} and Figure \ref{1aComplete} the difference in the requirements on the connectivity of the graph  is due to different coupling strength  in the cost functions between agents. The other example has the gradient dependent on the square of the agents actions but is linear in this example. 

\begin{figure}[ht]
\centering
\begin{minipage}[ht]{0.45\columnwidth}
\vspace{-1.8cm}	\centerline{\includegraphics[width=5cm]{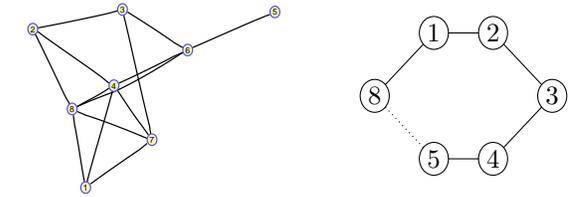}}
\vspace{-2.2cm}	\caption{Random $G_c$, $\lambda_2=0.83$}\label{1_random_GraphProj}%$\lambda_N=6.83$
\end{minipage}
\begin{minipage}[ht]{0.45\columnwidth}
\[
\xymatrixrowsep{4mm}
\xymatrixcolsep{4mm}
    \xymatrix{
     & *+[o][F]{1}\ar@{-}[r] & *+[o][F]{2}\ar@{-}[rd] & \\
    *+[o][F]{8}\ar@{-}[ru] & & & *+[o][F]{3} \\
     & *+[o][F]{5}\ar@{.}[lu]\ar@{-}[r] & *+[o][F]{4}\ar@{-}[ru] & \\
     }
\]
	\caption{Cycle $G_c$ Graph}
	\label{1cycleCommProj}
\end{minipage}
\end{figure}

\begin{figure}[ht]
\centering
\begin{minipage}[ht]{0.45\columnwidth}
	\centerline{\includegraphics[width=4cm]{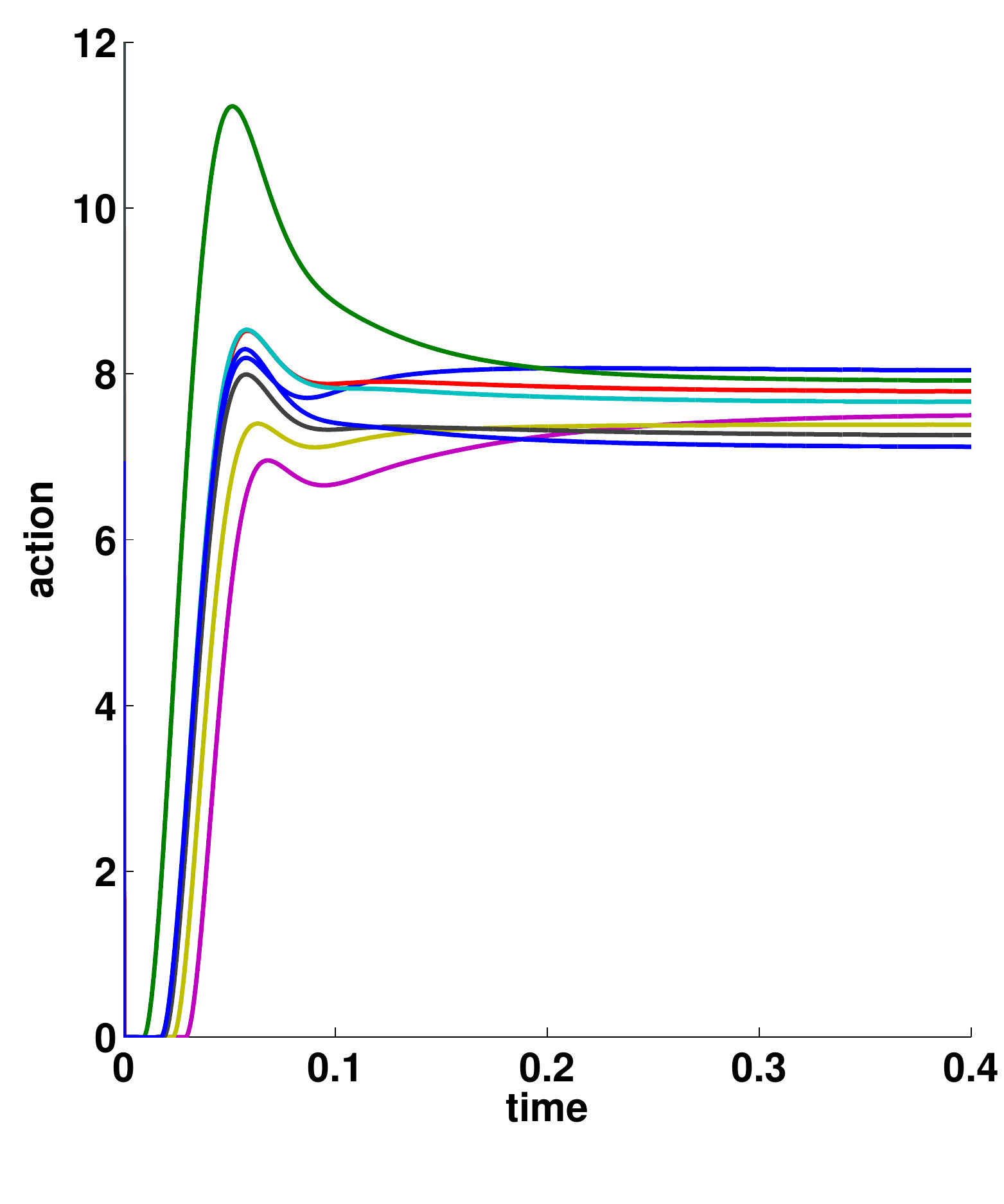}}
	\caption{(\ref{eq:agentActionEstimateProj}) over random $G_c$}\label{1RandomProj}
\end{minipage}
\begin{minipage}[ht]{0.45\columnwidth}
	\centerline{\includegraphics[width=4cm]{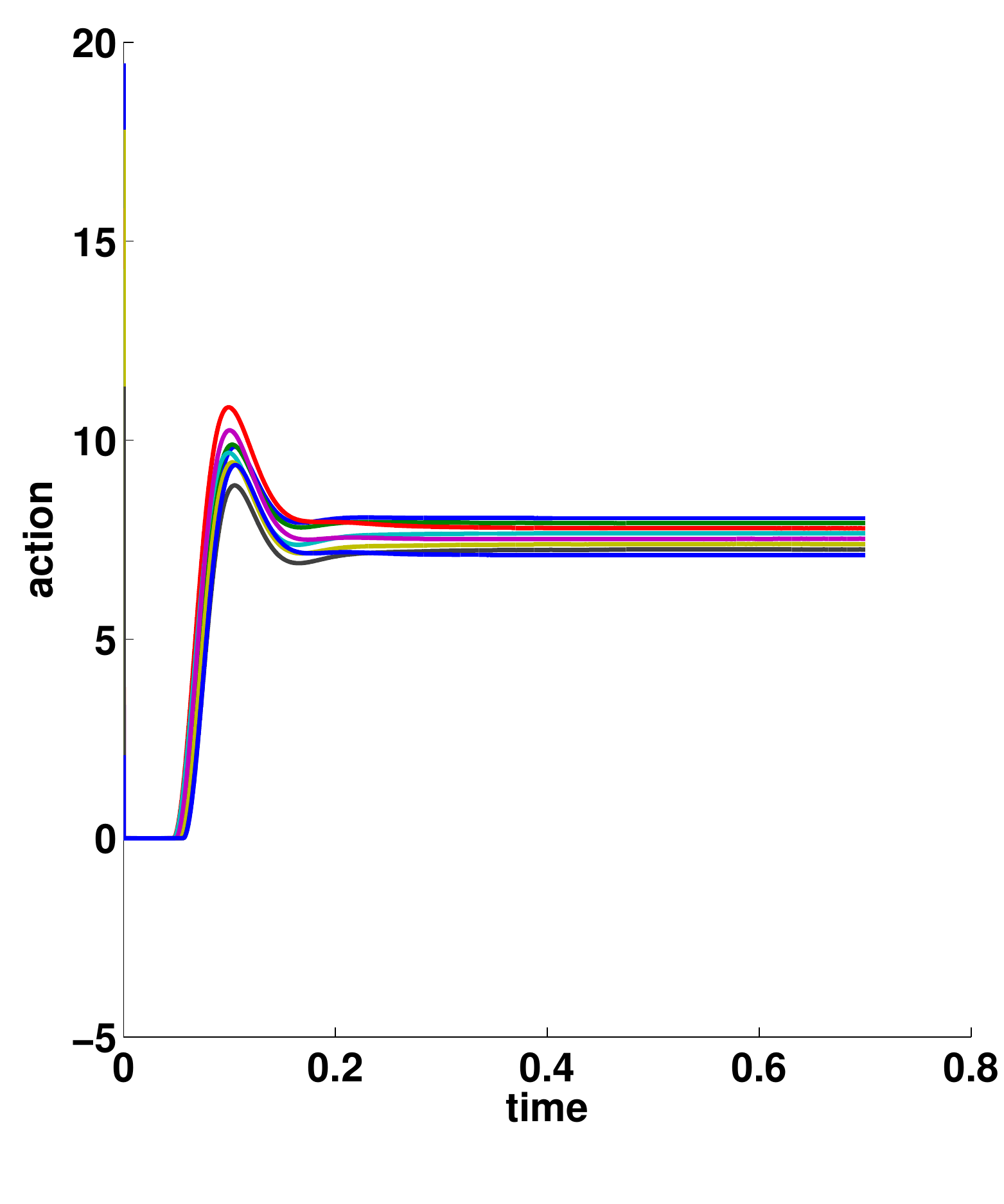}} 
	\caption{(\ref{eq:agentActionEstimateEPSProj1}) cycle $G_c$, $1/\epsilon = 200$}\label{1bComCycleProj}
\end{minipage}%\vspace{-0.3cm}
\end{figure}

\emph{Example 3:} Consider    $J_i(x_i,x_{-i}) = c_i(x_i) - x_if(x)$, where $c_i(x_i) = [ 20 + 40(i-1) ] x_i$ and $f(x) = 1200 - \sum_i(x_i)$, $\Omega_i = [0,200]$, for $N=20$. The NE is on the boundary $
\begin{bmatrix} 200 & 200 & 183.3 & 143.3 & 103.3
& 63.3 & 23.3 & 0 & \cdots \end{bmatrix}
$. Figure  \ref{4RandomProj} shows the  convergence of (\ref{eq:agentActionEstimateProj})  over  a randomly generated communication graph $G_c$ as in Fig. \ref{4_random_GraphProj}, while  Fig. \ref{4aComCycleProj} gives similar results, this time over a cycle $G_c$ graph as depicted in Fig. \ref{4cycleCommProj}. %The  actions' initial conditions are selected  randomly from $[0,200]$, while the estimates from $[-2000,2000]$. 

\begin{figure}[ht]
\centering
\begin{minipage}[ht]{0.45\columnwidth}
\vspace{-1.8cm}	\centerline{\includegraphics[width=5cm]{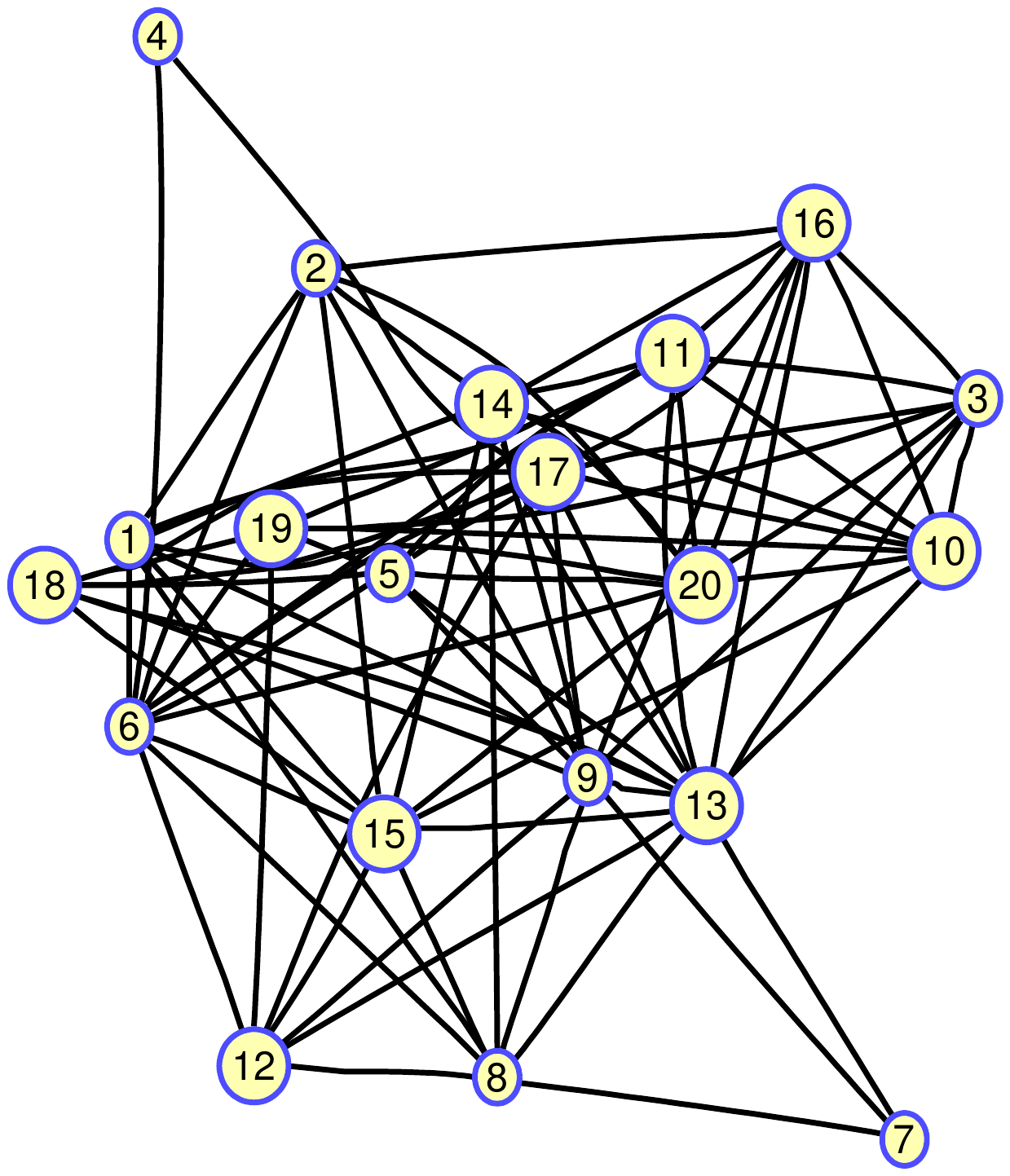}}
\vspace{-1.8cm}	\caption{Random $G_c$, $\lambda_2=1.87$}\label{4_random_GraphProj}%, $\lambda_N=15.36$
\end{minipage}
\begin{minipage}[ht]{0.45\columnwidth}
\[
\xymatrixrowsep{4mm}
\xymatrixcolsep{4mm}
    \xymatrix{
     & *+[o][F]{1}\ar@{-}[r] & *+[o][F]{2}\ar@{-}[rd] & \\
    *+[o][F]{20}\ar@{-}[ru] & & & *+[o][F]{3} \\
     & *+[o][F]{5}\ar@{.}[lu]\ar@{-}[r] & *+[o][F]{4}\ar@{-}[ru] & \\
     }
\]
\vspace{0.2cm}	\caption{Cycle $G_c$ Graph}
	\label{4cycleCommProj}
\end{minipage}
\end{figure}

\begin{figure}[ht]
\centering
\begin{minipage}[ht]{0.45\columnwidth}
	\centerline{\includegraphics[width=4cm]{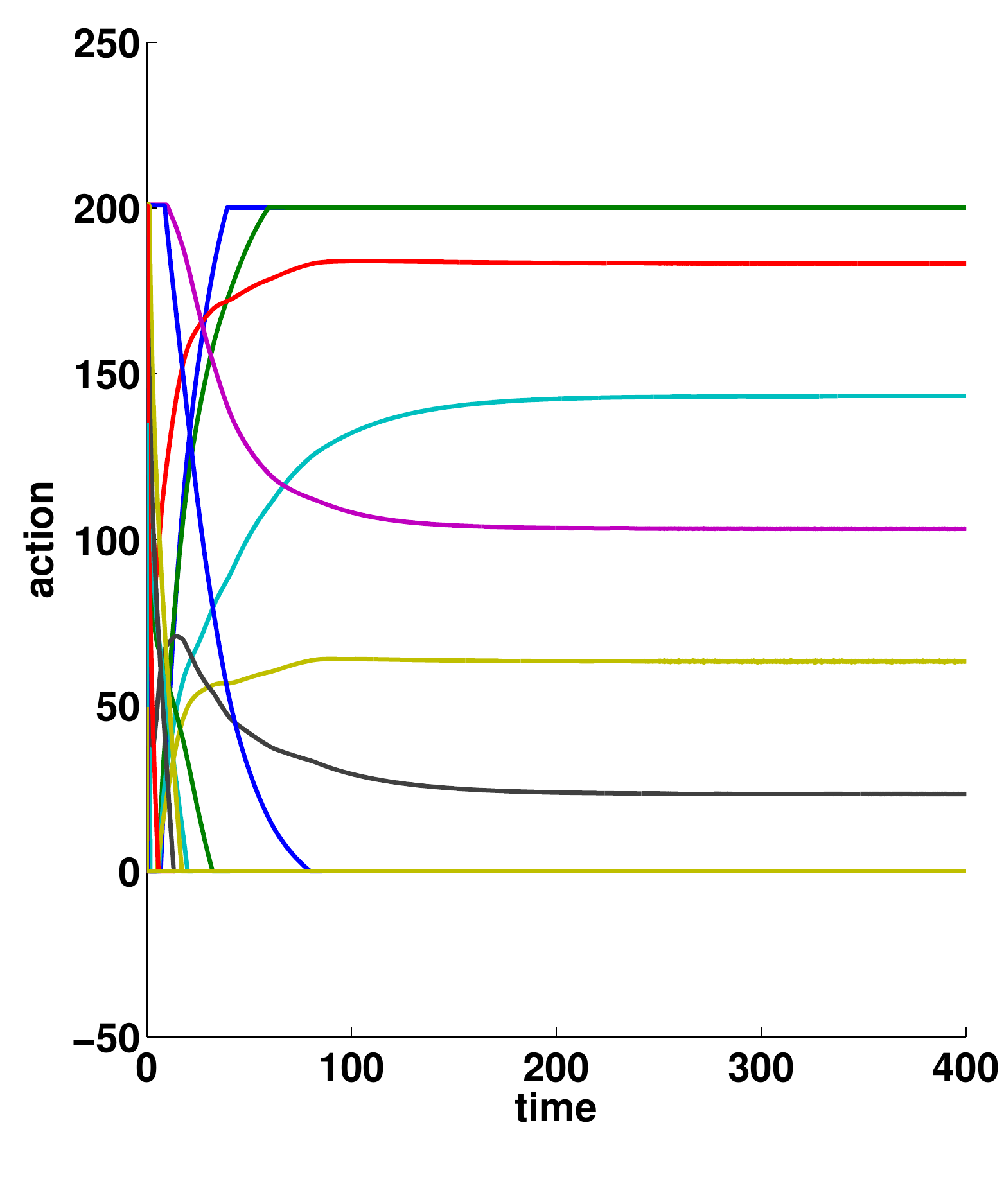}}
	\caption{(\ref{eq:agentActionEstimateProj}) over random $G_c$}\label{4RandomProj}
\end{minipage}
\begin{minipage}[ht]{0.45\columnwidth}
	\centerline{\includegraphics[width=4cm]{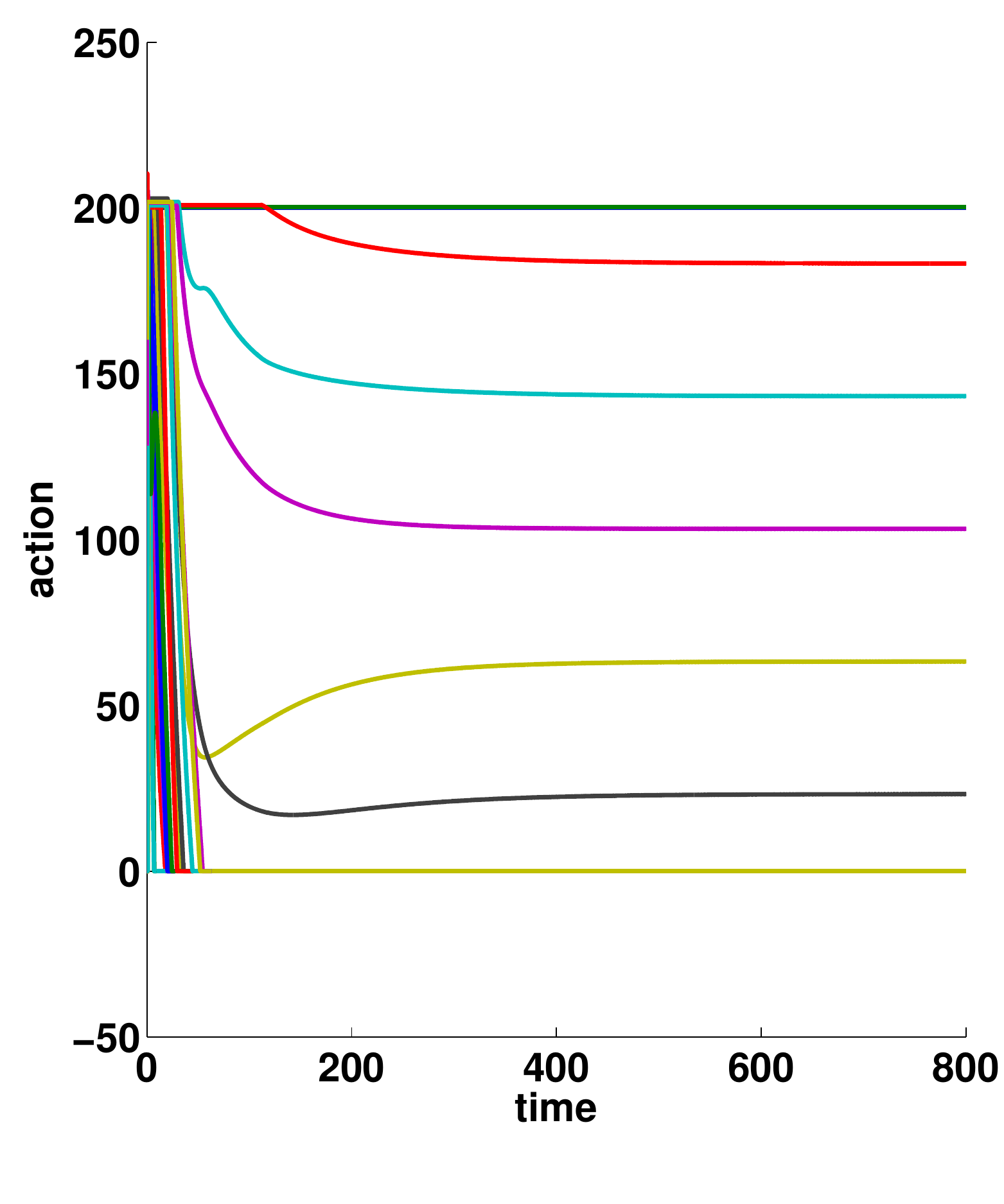}}
	\caption{(\ref{eq:agentActionEstimateProj}) over cycle $G_c$}\label{4aComCycleProj}
\end{minipage}%\vspace{-0.3cm}
\end{figure}

\section{Conclusion}\label{Conclusion}

In this paper, we studied distributed Nash equilibrium (NE) seeking over networks in continuous-time. We proposed an augmented   gradient-play dynamics with estimation in which players communicate locally only with their neighbours  to compute an estimate of the other players' actions. We derived the new dynamics based on the reformulation as a multi-agent coordination problem over an undirected graph. We exploited incremental passivity properties and showed that a synchronizing, distributed Laplacian feedback can be designed using relative estimates of the neighbours. Under strict monotonicity  property of the pseudo-gradient, we proved that the new dynamics converges to  the NE of the game. We  discussed  cases that highlight the tradeoff between properties of the game and the communication graph.

\bibliographystyle{IEEEtran}
\bibliography{referencesCDC}

% Generated by IEEEtran.bst, version: 1.13 (2008/09/30)
\begin{thebibliography}{10}
\providecommand{\url}[1]{#1}
\csname url@samestyle\endcsname
\providecommand{\newblock}{\relax}
\providecommand{\bibinfo}[2]{#2}
\providecommand{\BIBentrySTDinterwordspacing}{\spaceskip=0pt\relax}
\providecommand{\BIBentryALTinterwordstretchfactor}{4}
\providecommand{\BIBentryALTinterwordspacing}{\spaceskip=\fontdimen2\font plus
\BIBentryALTinterwordstretchfactor\fontdimen3\font minus
  \fontdimen4\font\relax}
\providecommand{\BIBforeignlanguage}[2]{{%
\expandafter\ifx\csname l@#1\endcsname\relax
\typeout{** WARNING: IEEEtran.bst: No hyphenation pattern has been}%
\typeout{** loaded for the language `#1'. Using the pattern for}%
\typeout{** the default language instead.}%
\else
\language=\csname l@#1\endcsname
\fi
#2}}
\providecommand{\BIBdecl}{\relax}
\BIBdecl

\bibitem{FKB12}
P.~Frihauf, M.~Krstic, and T.~Ba{\c{s}}ar, ``{Nash Equilibrium Seeking in
  Noncooperative Games},'' \emph{IEEE Trans. on Automatic Control}, vol.~57,
  no.~5, pp. 1192--1207, 2012.

\bibitem{Johansson2016}
Y.~Lou, Y.~Hong, L.~Xie, G.~Shi, and K.~Johansson, ``{Nash equilibrium
  computation in subnetwork zero-sum games with switching communications},''
  \emph{IEEE Trans. on Automatic Control}, vol.~61, no.~10, pp. 2920--2935,
  2016.

\bibitem{stankovic2012distributed}
M.~S. Stankovic, K.~H. Johansson, and D.~M. Stipanovic, ``Distributed seeking
  of {N}ash equilibria with applications to mobile sensor networks,''
  \emph{IEEE Trans. on Automatic Control}, vol.~57, no.~4, pp. 904--919, 2012.

\bibitem{LH10}
H.~Li and Z.~Han, ``{Competitive Spectrum Access in Cognitive Radio Networks:
  Graphical Game and Learning},'' in \emph{2010 IEEE Wireless Communication and
  Networking Conference}, April 2010, pp. 1--6.

\bibitem{CH12}
X.~Chen and J.~Huang, ``{Spatial Spectrum Access Game: Nash Equilibria and
  Distributed Learning},'' in \emph{Proc. of the 13th ACM MobiHoc}.\hskip 1em
  plus 0.5em minus 0.4em\relax New York, NY, USA: ACM, 2012, pp. 205--214.

\bibitem{AB04}
T.~Alpcan and T.~Ba{\c{s}}ar, ``{A hybrid systems model for power control in
  multicell wireless data networks},'' \emph{Performance Evaluation}, vol.~57,
  no.~4, pp. 477 -- 495, 2004.

\bibitem{Pavelbook2012}
L.~Pavel, \emph{Game theory for control of optical networks}.\hskip 1em plus
  0.5em minus 0.4em\relax Birkh\"{a}user-Springer Science, 2012.

\bibitem{Pavel06}
------, ``{A noncooperative game approach to OSNR optimization in optical
  networks},'' \emph{IEEE Trans. on Automatic Control}, vol.~51, no.~5, pp. 848
  -- 852, 2006.

\bibitem{Pavel09}
Y.~Pan and L.~Pavel, ``{Games with coupled propagated constraints in optical
  networks with multi-link topologies},'' \emph{Automatica}, vol.~45, no.~4,
  pp. 871 -- 880, 2009.

\bibitem{WE11}
J.~Wang and N.~Elia, ``{A control perspective for centralized and distributed
  convex optimization},'' in \emph{Proc. of the 50th IEEE CDC-ECC}, Dec 2011,
  pp. 3800--3805.

\bibitem{Marden2013}
N.~Li and J.~R. Marden, ``Designing games for distributed optimization,''
  \emph{IEEE Journal of Selected Topics in Signal Processing}, vol.~7, no.~2,
  pp. 230--242, 2013.

\bibitem{YSM11}
H.~Yin, U.~Shanbhag, and P.~G. Mehta, ``{Nash Equilibrium Problems With Scaled
  Congestion Costs and Shared Constraints},'' \emph{IEEE Trans. on Automatic
  Control}, vol.~56, no.~7, pp. 1702--1708, 2011.

\bibitem{Alpcan2005}
T.~Alpcan and T.~Ba{\c{s}}ar, \emph{{Distributed Algorithms for Nash Equilibria
  of Flow Control Games}}.\hskip 1em plus 0.5em minus 0.4em\relax
  Birkh{\"a}user Boston, 2005, pp. 473--498.

\bibitem{NS12}
J.~Koshal, A.~Nedic, and U.~Shanbhag, ``{A gossip algorithm for aggregative
  games on graphs},'' in \emph{Proc. of the 51st IEEE CDC}, Dec 2012, pp.
  4840--4845.

\bibitem{G16}
S.~Grammatico, F.~Parise, M.~Colombino, and J.~Lygeros, ``{Decentralized
  Convergence to Nash Equilibria in Constrained Deterministic Mean Field
  Control},'' \emph{IEEE Trans. on Automatic Control}, vol.~61, no.~11, pp.
  3315--3329, 2016.

\bibitem{FPAuto2016}
F.~Salehisadaghiani and L.~Pavel, ``{Distributed {N}ash equilibrium seeking: A
  gossip-based algorithm},'' \emph{Automatica}, vol.~72, pp. 209 -- 216, 2016.

\bibitem{FredIFAC2017}
------, ``{Distributed {N}ash {E}quilibrium seeking via the {A}lternating
  {D}irection {M}ethod of {M}ultipliers},'' in \emph{the 20th IFAC World
  Congress, to appear}, 2017.

\bibitem{WeiShiACC2017}
W.~Shi and L.~Pavel, ``{{LANA}: an {ADMM}-like {N}ash {E}quilibrium seeking
  algorithm in decentralized environment},'' in \emph{American Control
  Conference, to appear}, 2017.

\bibitem{ArrowHurwitzUzawa1951}
K.~Arrow, L.~Hurwitz, and H.~Uzawa, \emph{{A gradient method for approximating
  saddle points and constrained maxima}}.\hskip 1em plus 0.5em minus
  0.4em\relax Rand Corporation, United Staes Army Air Forces, 1951.

\bibitem{ArrowHurwitzUzawa1958}
------, \emph{{Studies in linear and non-linear programming}}.\hskip 1em plus
  0.5em minus 0.4em\relax Stanford University Press, Stanford, California,
  1958.

\bibitem{Flam02}
S.~Fl{\aa}m, ``{Equilibrium, evolutionary stability and gradient dynamics},''
  \emph{International Game Theory Review}, vol.~4, no.~04, pp. 357--370, 2002.

\bibitem{SA05}
J.~S. Shamma and G.~Arslan, ``{Dynamic fictitious play, dynamic gradient play,
  and distributed convergence to Nash equilibria},'' \emph{IEEE Trans. on
  Automatic Control}, vol.~50, no.~3, pp. 312--327, 2005.

\bibitem{G13}
B.~Gharesifard and J.~Cortes, ``{Distributed convergence to Nash equilibria in
  two-network zero-sum games},'' \emph{Automatica}, vol.~49, no.~6, pp. 1683 --
  1692, 2013.

\bibitem{NZ96}
A.Nagurney and D.~Zhang, \emph{{Projected Dynamical Systems and Variational
  Inequalities with Applications}}, ser. Innovations in Financial Markets and
  Institutions.\hskip 1em plus 0.5em minus 0.4em\relax Springer US, 1996.

\bibitem{Lemarechal}
J.-B. Hiriart-Urruty and C.~Lemar\'{e}chal, \emph{{Fundamentals of Convex
  Analysis}}.\hskip 1em plus 0.5em minus 0.4em\relax Springer, 2001.

\bibitem{AGT01}
C.~Godsil and G.~Royle, \emph{{Algebraic Graph Theory}}, ser. Graduate Texts in
  Mathematics.\hskip 1em plus 0.5em minus 0.4em\relax Springer New York, 2001.

\bibitem{MA11}
G.~Hines, M.~Arcak, and A.~Packard, ``{Equilibrium-independent passivity: A new
  definition and numerical certification},'' \emph{Automatica}, vol.~47, no.~9,
  pp. 1949 -- 1956, 2011.

\bibitem{DN13}
M.~{B{\"u}rger}, D.~{Zelazo}, and F.~{Allg{\"o}wer}, ``{Duality and network
  theory in passivity-based cooperative control},'' \emph{Automatica}, vol.~50,
  no.~8, pp. 2051 -- 2061, 2014.

\bibitem{BP15}
M.~B{\"u}rger and C.~D. Persis, ``{Dynamic coupling design for nonlinear output
  agreement and time-varying flow control},'' \emph{Automatica}, vol.~51, pp.
  210 -- 222, 2015.

\bibitem{Pavlov08}
A.~Pavlov and L.~Marconi, ``{Incremental passivity and output regulation},''
  \emph{System \& Control Letters}, vol.~57, pp. 400 -- 409, 2008.

\bibitem{B99}
T.~Ba{\c{s}}ar and G.~Olsder, \emph{{Dynamic Noncooperative Game Theory: Second
  Edition}}, ser. Classics in Applied Mathematics.\hskip 1em plus 0.5em minus
  0.4em\relax SIAM, 1999.

\bibitem{FP07}
F.~Facchinei and J.~Pang, \emph{{Finite-Dimensional Variational Inequalities
  and Complementarity Problems}}, ser. Springer Series in Operations Research
  and Financial Engineering.\hskip 1em plus 0.5em minus 0.4em\relax Springer
  New York, 2007.

\bibitem{SFPP14}
G.~Scutari, F.~Facchinei, J.~S. Pang, and D.~P. Palomar, ``{Real and Complex
  Monotone Communication Games},'' \emph{IEEE Trans. on Information Theory},
  vol.~60, no.~7, pp. 4197--4231, 2014.

\bibitem{SB87}
S.~Li and T.~Ba\c{s}ar, ``{Distributed Algorithms for the Computation of
  Noncooperative Equilibria},'' \emph{Automatica}, vol.~23, no.~4, pp.
  523--533, 1987.

\bibitem{K02}
H.~Khalil, \emph{{Nonlinear Systems}}.\hskip 1em plus 0.5em minus 0.4em\relax
  Prentice Hall, 2002.

\bibitem{A07}
M.~Arcak, ``{Passivity as a Design Tool for Group Coordination},'' \emph{IEEE
  Trans. on Automatic Control}, vol.~52, no.~8, pp. 1380--1390, 2007.

\bibitem{Brogliato2006}
B.~Brogliato, A.~Daniilidis, C.~Lemar\'{e}chal, and V.~Acary, ``{On the
  equivalence between complementarity systems, projected systems and
  differential inclusions},'' \emph{Systems \& Control Letters}, vol.~55, pp.
  45--51, 2006.

\bibitem{AubinCellina}
J.-P. Aubin and A.~Cellina, \emph{{Differential Inclusions}}.\hskip 1em plus
  0.5em minus 0.4em\relax Springer, Heidelberg, 1984.

\bibitem{DDR11}
M.~d.~B. P.~DeLellis and G.~Russo, ``{On QUAD, Lipschitz, and contracting
  vector fields for consensus and synchronization of networks},'' \emph{IEEE
  Trans. on Circuits and Systems}, vol.~58, no.~3, pp. 576--583, 2011.

\end{thebibliography}

%\begin{IEEEbiography}[{\includegraphics[width=1in]{dian_photo.png}}]{Dian Gadjov} received his B.Sc. degree in electrical engineering from the University of Waterloo, Canada. He is currently a M.A.Sc. student at the University of Toronto, Canada in the systems control group. His research interests include Nash Equilibrium seeking over networks, distributed control, optimization and reinforcement learning.\end{IEEEbiography}
%\begin{IEEEbiography}[{\includegraphics[width=1in]{LP_photo}}]{Lacra Pavel}(M'92 - SM'04)  received the Diploma of Engineer from Technical University of Iasi, Romania, and the Ph.D. degree in Electrical Engineering from Queen's University at Kingston, Canada in 1996.  After a postdoctoral stage at the National Research Council in Ottawa and four years of working in the industry, she joined University of Toronto in August 2002, where she is now a Professor in the Department of Electrical and Computer Engineering. Her research interests include game theory and distributed optimization in networks, with emphasis on dynamical systems  and control aspects. She acted as Publications Chair of the 45th IEEE Conference on Decision and Control. She is the author of  the book {\em Game Theory for Control of Optical Networks} (Birkh\"{a}user-Springer Science, ISBN 978-0-8176-8321-4, June 2012).
%\end{IEEEbiography}

\end{document}